\documentclass[11pt,reqno]{amsart}
\usepackage{fullpage}
\usepackage[T1]{fontenc}                                   
\usepackage{amsfonts}
\usepackage[utf8]{inputenc}                               
\usepackage{comment}                                       
\usepackage{mparhack}                                      
\usepackage{amsmath,amssymb,amsthm,mathrsfs,eucal}                      
\usepackage{booktabs}                                                                          
\usepackage{graphicx,subfig}                                                                
\usepackage{wrapfig}                          \usepackage{multicol}                                                  
\usepackage[bookmarks=true,colorlinks=true]{hyperref}                      
\usepackage{bm}                                                                                   

\newtheorem{defn}{Definition}[section]
\newtheorem{thm}{Theorem}[section]
\newtheorem{prop}{Proposition}[section]
\newtheorem{lem}{Lemma}[section]
\newtheorem{cor}{Corollary}[section]

\DeclareMathOperator*{\sign}{sign}

\newcommand{\R}{\mathbb{R}}

\newcommand{\oP}{{\mathcal{P}}\left[g\right](t,w)}

\def\XXint#1#2#3{{\setbox0=\hbox{$#1{#2#3}{\int}$}
\vcenter{\vspace{-1pt}\hbox{$#2#3$}}\kern-.5\wd0}}
\def\Xint#1{\mathchoice {\XXint\displaystyle\textstyle{#1}}{\XXint\textstyle\scriptstyle{#1}}{\XXint\scriptstyle\scriptscriptstyle{#1}}{\XXint\scriptscriptstyle\scriptscriptstyle{#1}}\!\int}
\def\intmed{\hbox{\ }\Xint{\hbox{\vrule height -0pt width 10pt depth 1pt}}}

\title[Opinion Formation Systems] 
      {Opinion Formation Systems via Deterministic Particles Approximation}

\author[Simone Fagioli and Emanuela Radici]{}

 \keywords{Aggregation/diffusion equation, deterministic particle approximation, degenerate diffusion, opinion formation}

 \email{simone.fagioli@univaq.it}
 \email{emanuela.radici@epfl.ch}



\begin{document}
\maketitle

\centerline{\scshape Simone Fagioli}
\medskip
{\footnotesize
 \centerline{DISIM, Universit\`a degli Studi dell’Aquila}
   \centerline{ via Vetoio 1 (Coppito), 67100 L’Aquila (AQ), Italy}
} 

\medskip

\centerline{\scshape Emanuela Radici}
\medskip
{\footnotesize
 \centerline{EPFL SB, Station 8,}
   \centerline{CH-1015 Lausanne, Switzerland}
}

\begin{abstract}
We propose an ODE-based derivation for a generalized class of opinion formation models either for single and multiple species (followers, leaders, trolls). The approach is purely deterministic and the evolution of the single opinion is determined by the competition between two mechanisms: the opinion diffusion and the compromise process. Such deterministic approach allows to recover in the limit  an aggregation/(nonlinear)diffusion system of PDEs for the macroscopic opinion densities.
\end{abstract}

\section{Introduction}\label{sec:intro}
The study of phenomena in social sciences trough mathematical modelling has gained an increasing attention in the scientific community, in particular in the last decades, see \cite{BeMaTo,CaFoLo,Ga,MoTa,NaPaTo,PaTobook,St}. The huge increasing of private and public communications on social networks like Facebook and Twitter speed up the attention on social phenomena, mainly due to a huge amount of data coming from empirical observations. The large information exchange push the study on the analysis of how these interactions influence the process of opinion formation, \cite{AlPaToZa,Be,BoLo,KlShSh,LaMa,SlLa,Sz,YaRoSc}.

Social networks are now particularly important for the political leaders communication, since they give the possibility of driving selected information to potential electors. Actually, the phenomenon of opinion leaders and their possible control strategy on the public opinion dates back to the work of  Lazarsfeld, \cite{Laz}, in the study of the USA presidential elactions in 1940, see also \cite{BiSe,BoSa}. The social networks, however, show an innovative feature: the possibility of \emph{measuring} the popularity of a given leader through factors such as the number of followers or the number of likes, see \cite{CrLaPTe}. The drawback is that these \emph{measure} can be easily falsified to guide the behaviour of real users and persuading them to vote for a specific candidate, see \cite{DeCFrJoKASh,KrGuHa}. In particular, in \cite{Bo} it was estimated that the most followed political US accounts on Twitter posses the $25\%$ of fake followers.

In the present work, we introduce a model general enough to catch such phenomenon.

Among the possible mathematical models, the kinetic formulation of opinion formation,  introduced in the seminal paper \cite{T1}, has gained a lot of attention mainly because of its flexibility to describe the phenomenon at different levels: microscopic, based on the pairwise interaction between agents, mesoscopic for the distribution of the opinions, and last but not least, macroscopic, useful for describing the trend of the opinion density. 
In this kinetic model, interactions among agents are supposed to be governed by two relevant concepts: compromise describes the way in which pairs of agents reach a compromise after exchanging opinions (its structure and other important features had been intensively studied in \cite{AlPaToZa,Be,DeAmWeFa,HeKr}), and self-thinking, modelled by a random variable, describes how agents change their opinions in an unpredictable way \cite{Be,T1}. As a result of the above considerations, one can consider the following pairwise interactions law between two agents with opinion $w$ and $v$ respectively
\begin{equation}\label{eq:kin_intro}
\begin{split}
& w' =w-P(w)(w-v)+\eta_wD(w),\\
& v' =v-P(v)(v-w)+\eta_vD(v),
\end{split}
\end{equation}
where  $(w',v' )$ denotes the opinions after the interaction, the functions $P$ and $D$ describe the local relevance of the compromise and the self-thinking (diffusion) for a given opinion, and $\eta_w$ and $\eta_v$ are two random variables. After a suitable scaling process, called \emph{quasi-invariant opinion limit}, the author in \cite{T1} obtains a Fokker-Planck type equation for the opinion density, precisely of the form
\begin{equation}\label{eq:toscani}
\partial_t u(t,w)= \frac{\lambda^2}{2}\partial_{ww}\left(D^2(w)u(t,w)\right) +\partial_w\left(P(w)(w-m) u(t,w) \right),
\end{equation}
 which results to be a good approximation for the stationary profiles of the kinetic equation associated to \eqref{eq:kin_intro}. A similar approach was also used to model more general opinion formation processes, such as the presence of opinion leaders, choice formation, control and networks (see \cite{AlPaToZa,AlPaZa1,AlPaZa2,AlPaZa3,during,DuWo,ToToZa}), as well as the different class of trading models for goods and wealth distribution (see \cite{PaTo,ToBrDe}).

We propose an alternative derivation which is based on a deterministic many particle limit and allows to consider a generalized version of the above kinetic model.

\subsection{Formal derivation of the generalized opinion formation model}
This section is devoted to introduce and formally derive, via many particle limit, the set of equations we want to investigate in the present paper.

\subsubsection{Basic one-species model} Consider a population composed of $N+1$ individuals with given initial opinion $W_i^0$, for $i=0,\ldots,N$ and assume that opinions can only range in a bounded set of values, say $I=\left[-1,1\right]$ where $W=\pm 1$ represent the extreme opinions. We further assume that each opinion has a certain \emph{strength} $\sigma_i$, for $i=0,\ldots,N$. According to the kinetic model described above, the $i$-individual can modify its own opinion depending on two possible mechanisms: the compromise  process (interaction) with the others individuals and the diffusion of that given opinion. Therefore, the time evolution of the opinion of the $i$-individual is described by the following ODE
 \begin{equation}\label{eq:ode_intro}
     \dot{W}_i = \mbox{ compromise} + \mbox{ diffusion }.
 \end{equation}
 It is standard to assume that the local or non-local relevance of the compromise depends on the distance between two different opinions. Then, by calling $P(\cdot)$ the function describing the local relevance of the compromise, the interaction of the $i-$individual with the other individuals can be modelled by
 \begin{equation}\label{eq:nonlocal_ode}
    \mbox{ compromise} = -\sum_{j=0}^{N}\sigma_j P(W_i,W_j)(W_i-W_j).
 \end{equation}
 
 For what concerns the diffusive part, instead, we assume that the opinions evolve with a speed equal to the \emph{osmotic velocity} associated to the diffusion process, as firstly introduced in \cite{russo1}.  More precisely, if we denote by $D(\cdot)$ the diffusion capacity of a given opinion, then the diffusive process is described by
  \begin{equation}\label{eq:diffusion_ode}
    \mbox{ diffusion } = \frac{\lambda^2}{2\sigma_i} D^2(W_i)\left(\frac{\sigma_{i-1}}{W_i-W_{i-1}}-\frac{\sigma_{i}}{W_{i+1}-W_{i}}\right),
 \end{equation}
 where $\lambda$ is a fixed diffusion coefficient. Let us observe, that, accordingly to the results in \cite{FaRa,gosse_toscani}, it is possible to generalize the diffusion law \eqref{eq:diffusion_ode} to more general \\emph{non linear} expressions. 
 Indeed, the quantity
 
\begin{equation}\label{eq_disc_d
ens_intro}
     u_i = \frac{\sigma_{i}}{W_{i+1}-W_{i}},
 \end{equation}
 represents the local density between two consecutive opinions, then  one can consider a generic nonlinear non-decreasing real valued function $\phi$ to rephrase \eqref{eq:diffusion_ode} as
   \begin{equation}\label{eq:diff_non_ode}
    \mbox{ diffusion } = \frac{\lambda^2}{2\sigma_i} D^2(W_i)\left(\phi(u_{i-1})-\phi(u_{i})\right).
 \end{equation}
 A further, hopefully realistic, assumption of our model concerns the boundary conditions. More precisely, we impose that the extreme opinions cannot alter during the evolution. As a consequence, we set
   \begin{align}\label{eq:ode_main_intro}
  \nonumber  &\dot{W}_0 = 0,\\
    & \dot{W}_i  = \frac{\lambda^2}{2\sigma_i} D^2(W_i)\left(\phi(u_{i-1})-\phi(u_{i})\right)-\sum_{j=0}^{N}\sigma_j P(W_i,W_j)(W_i-W_j),\, i\neq 0,N,\\  \nonumber
& \dot{W}_N = 0.
 \end{align}
 By formally sending $N\to\infty$ and calling $\mathcal{W}(z,t)$ the piecewise linear interpolation of the values $W_i$, we get
 \begin{equation*}
 \partial_t\mathcal{W}(t,z) = \frac{\lambda^2}{2\sigma_i} D^2(\mathcal{W} )\partial_z\left(\phi\left(\frac{1}{\partial_z\mathcal{W} }\right)\right)-\int_{0}^\sigma P(\mathcal{W} (z),\mathcal{W} (\zeta))\left(\mathcal{W} (z)-\mathcal{W} (\zeta)\right)d\zeta.
 \end{equation*}
Once here, if $\mathcal{U}(t,w)$ were the inverse of the $\mathcal{W}(t,z)$ and $u(t,w)$ were $\partial_w \mathcal{U}(t,w)$ then, inspired by the results in \cite{DiFFaRa,FaRa}, $u$ would be a weak solutions of the following aggregation-diffusion PDE 
 \begin{equation}\label{eq:PDE_main}
\partial_t u(t,w)= \partial_w\left(\frac{\lambda^2}{2}D^2(w)\partial_w \phi(u(t,w)) -\oP u(t,w) \right),
\end{equation}
for $(t,w)\in \left[0,T\right]\times I$, endowed with zero flux boundary condition, where the nonlocal operator $\oP$ is defined by
\[
 \oP = \int_I P(w,v)(v-w)u(t,v)dv.
\]
Note that, even in the linear diffusion case $\phi(u)=u$, this derivation leads to a diffusion of Fick type, differently  from the one in \eqref{eq:toscani}, that is of Laplacian type. However, expanding the inner derivative in \eqref{eq:toscani}, we get
\[
 \partial_{ww}(D^2(w)u(t,w))=\partial_w(D^2(w)\partial_w u(t,w))+\partial_w(\partial_wD^2(w)u(t,w)),
\]
where the first term on the r.h.s. corresponds to the diffusion in \eqref{eq:PDE_main}, while the second term plays the role of a local nonlinear transport term. A similar particle derivation can be performed in order to reconstruct the this transport, see \cite{DiFSt}.  Another difference between the two models is the fact that the \emph{mean opinion}
\begin{equation}\label{eq:mean_op_intro}
 m_1(t) = \int_I w u(t,w) dw,
\end{equation}
is not preserved in time. From a modelling point of view, this fact can be interpreted as a more \emph{realistic} compromise process, see the discussion in section \ref{sec:num}.

\subsubsection{Leaders-followers model}
We consider now a situation where the population is divided in subgroups: one group of followers and two (or more) groups of leaders, see \cite{during}. Hence, by denoting with $F_i$ the opinion of the $i-$th follower and with $L_h$ and $R_k$ the opinion of the $h-$th leader in the \emph{left} group and $k-$th leader of the \emph{right} group respectively, we have that the $i$-follower opinion evolves according to
\begin{align*}
 \dot{F}_i = \frac{\lambda_f^2}{2\sigma_{f,i}} D_f^2(F_i)\left(\phi_f(f_{i-1})-\phi_f(f_{i})\right)&-\sum_{j=1}^{N_f}\sigma_{f,j} P_{ff}(F_i,F_j)(F_i-F_j)\\
 &-\sum_{h=1}^{N_l}\sigma_{l,j} P_{fl}(F_i,L_h)(F_i-L_h)\\
 &-\sum_{k=1}^{N_r}\sigma_{r,j} P_{fr}(F_i,R_k)(F_i-R_k),
\end{align*}
where the last two sums concern the interactions between the $i$-th follower and the different leaders. Similarly, we can write
\begin{align*}
 \dot{L}_i = \frac{\lambda_l^2}{2\sigma_{l,i}} D_l^2(L_i)\left(\phi_l(l_{i-1})-\phi_l(l_{i})\right) &-\sum_{h=1}^{N_l}\sigma_{l,j} P_{ll}(L_i,L_h)(L_i-L_h)\\
 &-\sum_{k=1}^{N_r}\sigma_{r,j} P_{lr}(L_i,R_k)(L_i-R_k),
\end{align*}
for the generic $i$-th left leader and
\begin{align*}
 \dot{R}_i = \frac{\lambda_r^2}{2\sigma_{r,i}} D_r^2(R_i)\left(\phi_r(r_{i-1})-\phi_r(r_{i})\right) &-\sum_{k=1}^{N_r}\sigma_{r,j} P_{rr}(R_i,R_k)(R_i-R_k)\\
 &-\sum_{h=1}^{N_l}\sigma_{l,j} P_{lr}(R_i,L_h)(R_i-L_h),
\end{align*}
for the $i$-th right leader. In the previous equations we are considering \emph{strong} opinion leaders, i.e. the interaction with the followers does not affect the leader's opinion. In other words, we assume that $P_{lf} = P_{rf} = 0$. From the above system of ODEs, we can formally derive the following system of PDEs
\begin{equation}\label{eq:sys_FL}
\begin{split}
& \partial_t f = \partial_w \Big(\frac{\lambda_f^2}{2}D_f^2 (w) \partial_w \phi_f(f) - f \big( \mathcal{P}_{ff}[f] + \mathcal{P}_{fl}[l] + \mathcal{P}_{fr}[r] \big)\Big),\\
&  \partial_t l = \partial_w \Big(\frac{\lambda_l^2}{2}D_l^2 (w) \partial_w \phi_l(l) - l \big(\mathcal{P}_{ll}[l] + \mathcal{P}_{lr}[r]\big)\Big) ,\\
&  \partial_t r = \partial_w \Big(\frac{\lambda_r^2}{2}D_r^2 (w) \partial_w \phi_r(r) - r \big(\mathcal{P}_{rr}[r] + \mathcal{P}_{rl}[l]\big)\Big).
\end{split}
\end{equation}

\subsubsection{Leaders-followers-trolls model}
As a last example, we consider a case that is of interest for the understanding of opinion formation and opinion clustering in social media. Accordingly to the consideration mentioned in the Introduction, we assume that one of the group of leaders \emph{introduces} a new group of \emph{fake} agents, commonly known as \emph{trolls}. The trolls are indistinguishable from the followers, they only interact with the reference leaders and they cannot diffuse their opinion.
It is reasonable to assume that, in this new setting, the leaders opinion evolution is not affected by the presence of the trolls, while the followers have an additional compromise term that models the followers-trolls interaction. Clearly, since the followers cannot distinguish trolls, the compromise function of the follower-troll interaction remains $P_{ff}$. Therefore, the ODEs for the followers becomes 
\begin{align*}
 \dot{F}_i =& \frac{\lambda_f^2}{2\sigma_{f,i}} D_f^2(F_i)\left(\phi_f(f_{i-1})-\phi_f(f_{i})\right)-\sum_{j=1}^{N_f}\sigma_{f,j} P_{ff}(F_i,F_j)(F_i-F_j)\\
 &-\sum_{h=1}^{N_l}\sigma_{l,j} P_{fl}(F_i,L_h)(F_i-L_h)-\sum_{k=1}^{N_r}\sigma_{r,j} P_{fr}(F_i,R_k)(F_i-R_k)\\
 &-\sum_{j=1}^{N_f}\sigma_{q,j} P_{ff}(F_i,Q_j)(F_i-Q_j).
\end{align*}
Besides, consistently with the two features  that trolls only interact with the corresponding group of leaders and that they cannot diffuse their opinion, we deduce the trolls evolution law
\begin{align*}
 \dot{Q}_i =&-\sum_{k=1}^{N_r}\sigma_{r,k} P_{qr}(Q_i,R_k)(Q_i-R_k).
\end{align*}
In the macroscopic limit we then have
\begin{equation}\label{eq:sys_FLT}
\begin{split}
& \partial_t f = \partial_w \Big(\frac{\lambda_f^2}{2}D_f^2 (w) \partial_w \phi_f(f) - f \big( \mathcal{P}_{ff}[f]+ \mathcal{P}_{ff}[q] + \mathcal{P}_{fl}[l] + \mathcal{P}_{fr}[r] \big)\Big),\\
&  \partial_t l = \partial_w \Big(\frac{\lambda_l^2}{2}D_l^2 (w) \partial_w \phi_l(l) - l \big(\mathcal{P}_{ll}[l] + \mathcal{P}_{lr}[r]\big)\Big) ,\\
 &  \partial_t r = \partial_w \Big(\frac{\lambda_r^2}{2}D_r^2 (w) \partial_w \phi_r(r) - r \big(\mathcal{P}_{rr}[r] + \mathcal{P}_{rl}[l]\big)\Big),\\
&  \partial_t q = -\partial_w \Big(q \mathcal{P}_{qr}[r] \Big).
\end{split}
\end{equation}
In the present paper we provide a solid existence theory for the sample models presented so far, nevertheless it can be easily adapted to many other combinations of populations/interactions the readers may wish to consider.  

\bigskip

The paper is structured as follows. In section \ref{sec:prel} we introduce the main assumptions, the rigorous statement of the discrete setting and we state our main result Theorem \ref{thm:main}. Section \ref{sec:conti} is devoted to the proof of Theorem \ref{thm:main}. There, we show how a proper piecewise constant reconstruction of density, built from the \emph{microscopic} ODEs, converges to weak a solution of the corresponding PDEs system. In section \ref{sec:num}, instead, we study the large-time behaviour of the macroscopic model and we conclude with some numerical simulations based on the particle scheme, which validate the analytical results of the paper.

\section{Preliminaries and main result}\label{sec:prel}
\subsection{Main assumptions}\label{sec:assumptions}
In this section we state the setting and the assumptions of our main result. 
Let $w$ the a generic opinion belonging to the interval  $I=\left[-1,1 \right]$, and let $u(t,w)$ the macroscopic opinion density at time $t$ and opinion $w$ of a fixed population. Then the equation for $u$ is given by
\begin{equation}\label{main}
\partial_t u(t,w)= \partial_w\left(\frac{\lambda_u}{2}D_u^2(w)\partial_w \phi_u(u(t,w)) -u(t,w) \sum_{h \in \{f,l,r,q\}}\mathcal{P}_{uh}[h](t,w) \right).
\end{equation}
where $\lambda_u>0$ is a diffusion coefficient. Equation \eqref{main} is endowed with zero-flux boundary condition and the initial condition $u(0,w):= \bar{u}$, for some $\bar{u}$ satisfying the following assumptions:
\begin{itemize}
\item[(In1)] $\bar{u} \in BV(I; \R^+)$ with $\|\bar{u}\|_{L^1(I)} = \sigma_u$, for some $\sigma_u>0$,
\item[(In2)] there exist $m_u,M_u >0$ such that $m_u \leq \bar{u}(w) \leq M_u$ for every $w \in I$.
\end{itemize}
We recall that the function $D_u$ models the diffusion of a given opinion and plays the role of a mobility function. We assume that
\begin{itemize}
\item[(D1)]  $D_u\in C^2(I)$,  $0 \leq D_u \leq \| D_u\|_{L^\infty} < \infty $ and $D_u(\pm 1)=0$. 
\item[(D2)] $\partial_w D_u^2$ is uniformly bounded in $I$.
\end{itemize}
The prototype example for this diffusivity function is
\begin{equation}\label{eq:mainD}
D_u(w)=\left(1-w^2\right)^\frac{\alpha}{2},\quad \alpha >0.
\end{equation}
Observe that, under this choice, condition (D2) corresponds to the values $\alpha\geq 1$, see Figure \ref{fig:dDalpha}.
\begin{figure}[htbp]
\begin{center} 
    \includegraphics[scale=0.6]{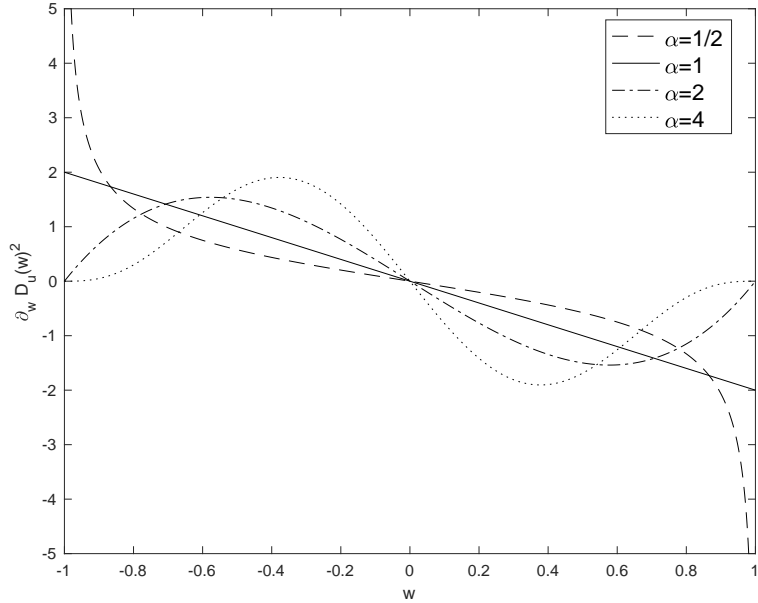}\par 
\caption{Behaviours of $\partial_w D_u^2$ for different values of $\alpha$.}
\label{fig:dDalpha}
\end{center}
\end{figure}

We further assume that
\begin{itemize}
\item[(Dif)] $\phi_u: [0,\infty) \to \R$ is a nondecreasing Lipshcitz function, with $\phi_u(0)=0$.
\end{itemize}
Remember that the nonlocal operator  $\mathcal{P}_{uh}[h](t,w)$ is modelling the local relevance of compromise among the same population (case $h=u$) or between different populations (case $h \neq u$), and is defined by
\[
\mathcal{P}_{uh}[h](t,w) = \int_I P_{uh}(w,v)(v-w)h(t,v)dv.
\]
We will denote with $\partial_i P_{uh}$ the derivative of $P_{uh}$ w.r.t. the $i-$th variable, for $i=1,2$, and we assume that
\begin{itemize}
\item[(P)] for every $h \in \{f,l,r,q\}$, $P_{uh}(\cdot,\cdot)$ is a non negative and uniformly bounded function. Moreover, $P_{uh}$ and $\partial_1 P_{uh}$ are  Lipschitz w.r.t. to both components, uniformly in the other:
\[
|P_{uh}(w_1,w^*)-P_{uh}(w_2,w^*)| + |P_{uh}(w^*,w_1) - P_{uh}(w^*,w_2)|\leq Lip(P_{uh})|w_1-w_2|,\] 
for all $w_1,\,w_2,\ w^*\in I$.
\end{itemize}
Typical choices for the function $P_{uh}$ are $P_{uh}(w,v)=1$, that correspond to the Sznajd-Weron model \cite{Sz}, $P_{uh}(w,v)=1-|w|$ or $P_{uh}(w,v)=1-|w-v|$.
\subsection{Rigorous statement of the discrete setting}\label{sec:pippone}
Let us denote with $I_T=\left[0,T\right]\times \left[-1,1\right]$ and with $\Gamma_T=\left[0,T\right]\times \left\{\pm 1\right\}$. We notice that we can rewrite both systems \eqref{eq:sys_FL} and \eqref{eq:sys_FLT} in a compact form as following: let $u$ be either $f,l,r$ or $q$, then it follows
\begin{equation}\label{main_comp}
\begin{cases}
\partial_t u= \partial_w\left(\frac{\lambda_u}{2}D_u^2\partial_w \phi_u(u) -\sum_{h \in \{f,l,r,q \}}u \mathcal{P}_{uh}[h] \right), &(t,w)\in I_T,\\
\left(\frac{\lambda_u}{2}D_u^2\partial_w \phi_u(u) -\sum_{h \in \{f,l,r,q \}}u\mathcal{P}_{uh}[h]\right) =0,& (t,w)\in\Gamma_T,\\
u(0,w) = \bar{u}(w), & w\in I.
\end{cases}
\end{equation}
where $P_{uh}$ and $D_u$ are eventually zero for some $h \in \{f,l,r,q \}$ and $\bar{u}: I\to \mathbb{R}$ is the initial datum for the generic species $f,l,r$ or $q$, satisfying (In1) and (In2). Fix  $N \in \mathbb{N}$, and introduce $\sigma_u^N=\sigma_u/N$ as the infinitesimal mass associated to each opinion. In this way, we will have the same number $N$ of particles for each population. This is just to simplify the computation when we prove the convergence of the scheme. Indeed, our argument works the same even when we have different amounts of particles $N^h$ for different populations, provided that we perform the particle limit at the same time for any all populations. Even if this choice would be more suitable from the modelling point of view, for example in order to catch multi-scale phenomena, taking the same number of particles $N$ is not restrictive since we are interested in recovering  the macroscopic equations for all species.
Thus, we atomize $\bar{u}$ in $N+1$ discrete opinions: set $W_0^0=-1$ and define recursively 
\begin{equation}\label{eq:ini_part}
    W_i^0 = \sup \left\lbrace x\in \R : \int_{W_{i-1}^0}^x \bar{u}(z)dz < \sigma_u^N \right\rbrace,\quad \forall\, i=1,\ldots, N-1.
\end{equation}
By construction we have that $W_N^0=1$ and $W_{i+1}^0 - W_i^0 \geq \frac{\sigma_u^N}{M_u}$ for every $i=0,\ldots, N-1$, where $M_u$ is defined in (In2). Given this initial set of particles, we let the positions $W_i$ evolve according to
\begin{equation}\label{eq:particles_gen}
\left\lbrace\begin{array}{llll}
  &\dot{W}_i(t) = \dot{W}_i^d(t) + \dot{W}_i^{p}(t), \quad  &i=1,\ldots,N-1 \\
  &\dot{W}_0(t)=\dot{W}_{N}(t)=0, \\
  &W_i(0)=W_i^0, \qquad &i=1,\ldots,N-1
\end{array}\right.
\end{equation}
where
\begin{equation}\label{eq:z_d}
    \dot{W}_i^d(t) =  \frac{\lambda_u }{2\sigma_u^N}D_u^2(W_i(t))\big(\phi_u (u_{i-1}(t)) - \phi_u(u_i(t))\big), 
\end{equation}
and
\begin{equation}\label{eq:z_n}
    \dot{W}_i^{p}(t) = - \sum_{h \in \{f,l,r,q \}} \sigma_h^N \sum_{j=0}^{N+1} P_{uh}(W_i(t),H_j(t))(W_i(t) - H_j(t)).
\end{equation}
In \eqref{eq:z_d} and \eqref{eq:z_n}, $H_j$ indicates the opinions of the population $h$ and $u_i(t)$ is a local reconstructions for the density \begin{equation}
 u_i(t):= \frac{\sigma^N_u}{ (W_{i+1}(t) - W_i(t))},\quad i=0,\ldots,N.
 \end{equation} 
 Note that 
\begin{equation}\label{eq:uidot1}
\dot{u}_i (t)= - \frac{u_i^2 (t)}{\sigma_u^N} (\dot{W}_{i+1} (t) - \dot{W}_i (t)) = \dot{u}_i^d  (t)+ \dot{u}_i^{p} (t)
\end{equation}
 with 
\begin{equation}\label{eq:uidot2}
    \dot{u}_i^d(t) = - u_i(t) \frac{\dot{W}_{i+1}^d(t) - \dot{W}_i^d(t)}{W_{i+1}(t) - W_i(t)}, \qquad \dot{u}_i^{p}(t) = - u_i(t)\frac{\dot{W}_{i+1}^{p}(t) - \dot{W}_i^{p}(t)}{W_{i+1}(t) - W_i(t)}.
\end{equation}
In order to lighten the notation, we will normalize the diffusion constant $\frac{\lambda_u }{2}$ to $1$ and denote by $\mathfrak{H} = \{f,l,r,q\}$ the tag-set of the different populations. We further introduce the constant
\begin{equation}\label{eq:theta}
\Theta_{u} :=\sum_{h \in \mathfrak{H}} \big( Lip[P_{uh}] + \| P_{uh}\|_{L^\infty} + Lip[\partial_1 P_{uh}]\big),
\end{equation}
that will be largely used in the following. 
The following Lemma shows that there is an a priori control from above and below on the mutual distance between particles. In particular, the order of the particles is maintained during the whole evolution.

\begin{lem}[Discrete Min-Max Principle]\label{lem:minmax system}
Let $T>0$ be fixed and $\bar{u}$ under the assumptions (In1) and (In2). Let $c_u$ be a positive constant satisfying $c_u> \Theta_{u}$, with $\Theta_{u}$ defined in \eqref{eq:theta}. Then 
\begin{equation}\label{eq: minmax system}
\frac{\sigma_u^N}{M_u}e^{-c_u T} \leq W_{i+1}(t) - W_i(t) \leq \frac{\sigma_u^N}{m_u}e^{c_u T},
\end{equation} 
for every $i=0,\ldots,N-1$ and $t \in [0,T]$. Moreover 
\begin{equation}\label{eq: minmaxg system}
    e^{-c_u T}m_u \leq u_i(t)\leq e^{c_u T}M_u,
\end{equation}
for all $i=0,...,N-1$ and $t\in[0,T]$.
\end{lem}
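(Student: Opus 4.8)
The two displayed chains are equivalent via the identity $u_i=\sigma_u^N/(W_{i+1}-W_i)$, so the plan is to prove the density bounds \eqref{eq: minmaxg system}; the gap bounds \eqref{eq: minmax system} then follow by taking reciprocals. My strategy is a discrete maximum/minimum principle for $M(t):=\max_i u_i(t)$ and $m(t):=\min_i u_i(t)$, followed by a Gronwall argument whose exponential rate is the constant $\Theta_u$ of \eqref{eq:theta}; since $c_u>\Theta_u$ this produces the factors $e^{\pm c_uT}$ with slack to spare. First I would record the initialization: from \eqref{eq:ini_part} one has $\int_{W_i^0}^{W_{i+1}^0}\bar u\,dz=\sigma_u^N$, so (In2) gives $\sigma_u^N/M_u\le W_{i+1}^0-W_i^0\le\sigma_u^N/m_u$, i.e.\ $m_u\le u_i(0)\le M_u$. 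The system \eqref{eq:particles_gen} has a locally Lipschitz right-hand side as long as all gaps stay positive (there $u_i$ is finite, $\phi_u$ is Lipschitz, $D_u^2\in C^2$ and $P_{uh}$ is Lipschitz), so Picard gives a unique solution on a maximal interval; the whole estimate is run on that interval and the a priori bounds, keeping the gaps inside a fixed compact subset of $(0,\infty)$, prevent collisions and blow-up, so a standard continuation argument extends the solution to $[0,T]$.

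The core is a differential inequality for $M$ and $m$. Since these are maxima/minima of finitely many $C^1$ functions, $M$ and $m$ are locally Lipschitz and their one-sided Dini derivatives satisfy $D^+M(t)\le\max\{\dot u_i(t):u_i(t)=M(t)\}$ and $D_+m(t)\ge\min\{\dot u_i(t):u_i(t)=m(t)\}$. I would then use the splitting \eqref{eq:uidot1}--\eqref{eq:uidot2}, $\dot u_i=\dot u_i^d+\dot u_i^p$, and treat the two parts separately. For the diffusive part, at an interior index $i^\ast$ realizing the maximum one has $u_{i^\ast}\ge u_{i^\ast\pm1}$, hence $\phi_u(u_{i^\ast})-\phi_u(u_{i^\ast+1})\ge0$ and $\phi_u(u_{i^\ast-1})-\phi_u(u_{i^\ast})\le0$ by monotonicity of $\phi_u$; plugging \eqref{eq:z_d} into $\dot u_{i^\ast}^d=-u_{i^\ast}(\dot W_{i^\ast+1}^d-\dot W_{i^\ast}^d)/(W_{i^\ast+1}-W_{i^\ast})$ shows $\dot W_{i^\ast+1}^d-\dot W_{i^\ast}^d\ge0$, so $\dot u_{i^\ast}^d\le0$. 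Symmetrically, at an interior minimum $\dot u_{i_\ast}^d\ge0$. This is the discrete maximum principle: diffusion never increases the peak density nor decreases the trough density.

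For the aggregation part I would view \eqref{eq:z_n} as sampling the velocity field $V_u(w):=-\sum_{h}\sigma_h^N\sum_j P_{uh}(w,H_j)(w-H_j)$, so that $\dot W_i^p=V_u(W_i)$ and, at interior indices, $\dot u_i^p=-u_i\,[V_u(W_{i+1})-V_u(W_i)]/(W_{i+1}-W_i)$. The assumptions (P) make $V_u$ globally Lipschitz with constant controlled by $\Theta_u$ (the boundedness and Lipschitz continuity of $P_{uh}$ and $\partial_1P_{uh}$ in \eqref{eq:theta} are tailored to dominate $|V_u'|$), whence $|\dot u_i^p|\le\Theta_u u_i$. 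Combining with the maximum principle gives $D^+M\le\Theta_u M$ and $D_+m\ge-\Theta_u m$ at interior extremizers, and Gronwall yields $M(t)\le M_u e^{\Theta_u t}\le M_u e^{c_uT}$ and $m(t)\ge m_u e^{-\Theta_u t}\ge m_u e^{-c_uT}$, closing the continuation.

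The hard part will be the two extreme indices $i=0,N-1$, where the constraint $\dot W_0=\dot W_N=0$ destroys the difference structure: for instance $\dot u_0^p=-\tfrac{u_0^2}{\sigma_u^N}V_u(W_1)=-u_0\,V_u(W_1)/(W_1-W_0)$ is no longer a controlled difference quotient. For the \emph{upper} bound this is harmless, because at a density maximum near $-1$ the gap $W_1-W_0$ is small, $V_u(W_1)\approx V_u(-1)=\sum_h\sigma_h^N\sum_j P_{uh}(-1,H_j)(H_j+1)\ge0$ points inward, and $\dot u_0^p\le0$ has the favorable sign. The genuinely delicate point is the \emph{lower} bound at the extremes, where the same term carries the unfavorable sign and an apparent $1/(W_1-W_0)$ factor; controlling it requires exploiting the zero-flux structure in \eqref{main_comp}, which forces the aggregation flux to vanish at $\pm1$ (consistently with the frozen extreme particles), together with the degeneracy $D_u(\pm1)=0$ from (D1). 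I expect this boundary lower bound to be the main obstacle of the proof.
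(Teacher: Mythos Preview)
Your approach is the paper's argument recast in density variables: the paper runs a first-touching contradiction for the rescaled gap $e^{\pm c_u t}(W_{i+1}-W_i)$, while you run a Dini-derivative Gronwall on $M(t)=\max_i u_i$ and $m(t)=\min_i u_i$; since $u_i=\sigma_u^N/(W_{i+1}-W_i)$ these are equivalent formulations of the same barrier argument. At interior extremizers both proofs rest on the same two facts---the diffusive increment has the favorable sign by monotonicity of $\phi_u$, and the nonlocal increment is bounded by $\Theta_u$ times the gap via the Lipschitz estimate on $w\mapsto P_{uh}(w,H_j)(w-H_j)$. For the upper density bound at the boundary, your use of $V_u(-1)\ge 0$ (the aggregation field pushes inward at the frozen endpoints) is precisely the mechanism behind the paper's lower gap bound; a small correction is that the conclusion you get is $\dot u_0^{p}\le \Theta_u u_0$ rather than $\dot u_0^{p}\le 0$, but that is all you need.

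On the \emph{lower} density bound at the boundary you are in fact more careful than the paper. The paper only carries out the lower gap inequality (equivalently, your upper density bound) in detail and then states that ``the proof of the upper bound can be easily obtained by adapting the following steps.'' But the adaptation is not symmetric at $i=0$ (or $i=N-1$): when the first gap is \emph{maximal} one would need $\dot W_1^{p}=V_u(W_1)$ to be bounded above by a quantity of order $W_1-W_0$, and now the inward sign $V_u(-1)\ge 0$ goes the wrong way and cannot be absorbed into a Lipschitz remainder. So your diagnosis of this boundary case as the genuine obstacle is accurate, and the paper's written proof does not resolve it either---it simply does not treat that case. Closing it requires an argument beyond the interior one; neither your sketch nor the paper supplies it.
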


\begin{proof}
The proof of both inequalities is argued by contradiction. We only prove the lower bound of \eqref{eq: minmax system}, the proof of the upper bound can be easily obtained by adapting the following steps. Thanks to (In2), we know that $$W_{i+1}(0)-W_i(0)\geq \frac{\sigma_u^N}{M_u}, \quad i=0,...,N.$$
To simplify the notation, let us introduce the \emph{lower bound} function 
\[
lb_u(t)=\frac{\sigma_u^Ne^{-c_ut}}{M_u}.
\]
Let now $t_1$ be 
\begin{equation}\label{contr_1 system}
 t_1=\inf_{t \in [0,T] } \left\{W_{i+1}(t)-W_i(t) = lb_u(t),\,W_{j+1}(t)-W_j(t) \geq lb_u(t), j\neq i\right\}.
\end{equation} 
Suppose, by contradiction, that there exists some $0<t_1<t_2 \leq T$ such that
\[
W_{i+1}(t)-W_i(t) < lb_u(t), \quad t\in\left(t_1,t_2\right],
\]
then the contradiction follows as soon as we show that $$\frac{d}{dt}\left[ e^{c_u t}(W_{i+1}(t) - W_i(t))\right]_{|_{t=t_1}} > 0.$$
We can compute the time derivative as follows
\begin{align*}
&\frac{d}{dt}\left[ e^{c_u t}(W_{i+1}(t) - W_i(t))\right]_{|_{t=t_1}}\\
& = e^{c_u t_1}\left[(\dot{W}_{i+1}(t_1) -\dot{W}_i(t_1)) + c_u(W_{i+1}(t_1) - W_i(t_1))\right] \\
&= e^{c_u t_1} \left[(\dot{W}^d_{i+1}(t_1) -\dot{W}^d_i(t_1)) + (\dot{W}^{p}_{i+1}(t_1) -\dot{W}^{p}_i(t_1))
\right]+ \frac{c_u\sigma_u^N}{M_u},
\end{align*}
with $\dot{W}_i^d$, $\dot{W}_i^{p}$ defined in \eqref{eq:z_d} and \eqref{eq:z_n} respectively. Thanks to assumption \eqref{contr_1 system}, 
\begin{equation*}
W_{i+1}(t_1)-W_i(t_1)\leq W_{j+1}(t_1)-W_j(t_1), \mbox{ for all } j\neq i,
\end{equation*} 
then $W_{i \pm 1}(t_1)\leq W_i(t_1)$ and this directly implies that $(\dot{W}^d_{i+1}(t_1) -\dot{W}^d_i(t_1)) \geq 0$, using the assumption (Dif). 
Recalling the definition of $\dot{W}_i^{p}$ and the assumptions (P) on the generic $P_{uh}$, it is immediate to get  
\begin{align*}
&\dot{W}^{p}_{i+1}(t_1) -\dot{W}^{p}_i(t_1)  \\
&= - \sum_{h \in \mathfrak{H}} \sigma_h^N \sum_{j=0}^{N} [P_{uh}(W_{i+1},H_j)(W_{i+1} - H_j)   - P_{uh}(W_i,H_j)(W_i - H_j)]  \\
& = \sum_{h \in \mathfrak{H}} \sigma_h^N  \sum_{j=0}^{N} \big[ (P_{uh}(W_{i+1},H_j) - P_{uh}(W_{i},H_j))(W_{i+1} - H_j)\big] \\
& \quad + \sum_{h \in \mathfrak{H}} \sigma_h^N  \sum_{j=0}^{N} \big[    P_{uh}(W_{i},H_j))(W_{i+1} - W_i)\big] \\
& \geq -\Theta_{u} (W_{i+1}(t_1) - W_i(t_1)).
\end{align*}
Finally, by using the above lower bound on the nonlocal part, the $0$ lower bound of the diffusive part and by recalling that $c_u > \Theta_{u}$,
we deduce 
\begin{equation}\label{max system}
\frac{d}{dt}\left[ e^{c_u t}(W_{i+1}(t) - W_i(t))\right]_{|_{t=t_1}}\geq \frac{\sigma_u^N}{M_u}\left[c_u - \Theta_{u} \right] > 0,
\end{equation}
 which gives the desired contradiction. 
\end{proof}

We are now in position to define the $N$-discrete density as 
\begin{equation}\label{rhoN}
u^N(t,\,w):= \sum_{i=0}^{N-1}  u^N_i (t) \chi_{[W^N_i(t),\,W^N_{i+1}(t))}(w), 
\end{equation}  

because the intervals $[W^N_i(t),\,W^N_{i+1}(t))$ are well defined for every $t \in [0,T]$. Moreover, 
$$ 
\| u^N(t,\cdot)\|_{L^1(I)} = \sum_{i=0}^{N-1} u^N_i(t)\int_{W^N_i(t)}^{W^N_{i+1}(t)}1 \,dw = \sum_{i=0}^{N-1} \frac{\sigma_u}{N} = \sigma_u,
$$
for every $t \in [0,T]$ and independently on $N$. As a consequence of \eqref{eq: minmaxg system}, we deduce that $(u^N(t,w))_N$ is also uniformly bounded in $L^\infty(I_T)$.

In the following Definition we introduce the notion of weak solutions we are dealing with.

\begin{defn}[Weak Solution]\label{weaksolutiondfn}
Let $u$ be either $f,l,r$ or $q$. The function $u$ is a \emph{weak solution} of \eqref{main_comp} if $u \in L^\infty  \cap BV(I_T)$,  $u(0,\cdot)= \bar{u}$ and satisfies
\[ \int_{I_T} \Big[u \partial_t \zeta + \phi_u(\rho)\partial_w\left(D_u^2\partial_w\zeta\right) - u\sum_{h\in \mathfrak{H}}\mathcal{P}_{uh}\left[h\right] \partial_w\zeta \Big] dw\,dt = 0, \]
 for all $\zeta \in C^{\infty}_{0}(I_T)$.
\end{defn}

The main result of this manuscript is stated in the following Theorem.

\begin{thm}\label{thm:main}
For every $u,h$ among $\{f,l,r,q\}$,
consider $\phi_u,\,D_u,\,P_{uh}$ under the assumptions $(Dif)$, $(D1)$, $(D2)$ and $(P)$ respectively. Let $\bar{u}: I \to \R$ be as in $(In1)$ and $(In2)$ and let $T>0$ be fixed. Then, up to a subsequence, the discretized densities $u^N$  defined in \eqref{rhoN} strongly converge in $L^1(I_T)$ to a limit in $L^{\infty} \cap BV (I_T)$ which solves the Initial-boundary value Problem \eqref{main_comp} in the sense of Definition \ref{weaksolutiondfn}.
\end{thm}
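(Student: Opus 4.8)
The plan is to follow the standard three-stage scheme for deterministic particle approximations: first collect uniform a priori bounds on $u^N$, then extract a strongly $L^1$-convergent subsequence by a compactness argument of Aubin--Lions type, and finally pass to the limit in a discrete weak formulation to identify the limit as a weak solution in the sense of Definition \ref{weaksolutiondfn}.

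\emph{A priori estimates.} The $L^1$ bound $\|u^N(t,\cdot)\|_{L^1(I)}=\sigma_u$ is exact, and Lemma \ref{lem:minmax system} already provides the two-sided bound $e^{-c_uT}m_u\le u^N\le e^{c_uT}M_u$ on $I_T$, so $(u^N)_N$ is bounded in $L^\infty(I_T)$ uniformly in $N$. The decisive estimate is a uniform control of the spatial total variation $TV[u^N(t,\cdot)]=\sum_{i=0}^{N-1}|u_{i+1}(t)-u_i(t)|$. I would differentiate this quantity in time using the identities \eqref{eq:uidot1}--\eqref{eq:uidot2} for $\dot u_i=\dot u_i^d+\dot u_i^p$, estimating the diffusive and nonlocal contributions separately: for the diffusive part one exploits the monotonicity of $\phi_u$ from (Dif) together with $D_u\in C^2$ and the mutual-distance bounds of Lemma \ref{lem:minmax system}, while the nonlocal part is controlled by the Lipschitz bounds on $P_{uh}$ and $\partial_1 P_{uh}$ collected in $\Theta_u$. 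The goal is a differential inequality $\frac{d}{dt}TV[u^N(t,\cdot)]\le C(1+TV[u^N(t,\cdot)])$ with $C$ independent of $N$, so that Gr\"onwall yields $TV[u^N(t,\cdot)]\le C(T)$ uniformly. I expect this to be the main obstacle, as the degeneracy of $D_u$ and the coupling of nonlinear diffusion with the nonlocal drift make the discrete computation delicate.

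\emph{Time regularity and compactness.} Next I would bound the particle velocities $|\dot W_i|\le|\dot W_i^d|+|\dot W_i^p|$ uniformly via Lemma \ref{lem:minmax system} and assumptions (D1)--(D2), (Dif), (P); reconstructing the density this gives equicontinuity in time in a weak (bounded-Lipschitz, i.e. $W^{-1,1}$-type) norm, equivalently a uniform bound on $\partial_t u^N$ as the divergence of an $L^1$-bounded flux. Combining the uniform spatial $BV$ bound with this weak time-continuity, a generalized Aubin--Lions--Simon argument (alternatively Helly's selection theorem in space followed by a diagonal extraction in time) produces a subsequence, still denoted $u^N$, converging strongly in $L^1(I_T)$ to some $u$. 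Lower semicontinuity of the total variation and the uniform $L^\infty$ bound ensure $u\in L^\infty\cap BV(I_T)$, and the mass and the initial datum $\bar u$ are inherited in the limit.

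\emph{Passage to the limit.} Finally I would derive the discrete weak formulation satisfied by $u^N$: multiplying the ODEs \eqref{eq:particles_gen} by the test values $\zeta(t,W_i(t))$ and summing by parts (Abel summation) reproduces, up to consistency errors that vanish as $N\to\infty$, the three terms of Definition \ref{weaksolutiondfn}. In the limit, Lipschitz continuity of $\phi_u$ and the strong $L^1$ convergence give $\phi_u(u^N)\to\phi_u(u)$ in $L^1$, so the diffusive contribution $\int_{I_T}\phi_u(u^N)\,\partial_w(D_u^2\partial_w\zeta)\,dw\,dt$ passes to the limit because $\partial_w(D_u^2\partial_w\zeta)$ is bounded and continuous (here $D_u\in C^2$ and $D_u(\pm1)=0$ are precisely what yield the zero-flux condition on $\Gamma_T$ with no boundary remainder). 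The nonlocal operators converge, $\mathcal{P}_{uh}[h^N]\to\mathcal{P}_{uh}[h]$ uniformly, from boundedness and Lipschitz regularity of $P_{uh}$ together with the strong convergence of each species $h^N$; pairing with the convergent $u^N$ handles the aggregation term. Collecting these convergences shows that $u$ satisfies the weak formulation of \eqref{main_comp}, which is exactly the assertion of the theorem.
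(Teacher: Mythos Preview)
Your outline is essentially the paper's own strategy: a uniform $BV$-in-space bound obtained by differentiating $TV[u^N(t,\cdot)]$ and applying Gr\"onwall, a time-equicontinuity estimate feeding into a generalized Aubin--Lions lemma, and then a consistency computation showing the discrete weak formulation converges term by term. The paper carries this out precisely along these lines (Propositions \ref{LemmastimaBV system}, \ref{Lemmacontinuity in 1W system}, \ref{prop convergenza una specie}, \ref{Propconsistencysystem}).

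Two points of detail deserve care. First, you write that you would ``bound the particle velocities $|\dot W_i|$ uniformly''; this is not true for the diffusive part, since $|\dot W_i^d|\sim (\sigma_u^N)^{-1}|\phi_u(u_{i-1})-\phi_u(u_i)|$ carries an explicit factor $1/\sigma_u^N$. What one can bound uniformly is the aggregate $\sigma_u^N\sum_i|\dot W_i|$, using the $BV$ estimate just obtained; the paper exploits exactly this through the one-dimensional pseudo-inverse representation of the scaled $1$-Wasserstein distance (Proposition \ref{Lemmacontinuity in 1W system}), which plays the role of your ``bounded-Lipschitz/$W^{-1,1}$-type'' norm. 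Second, in the nonlocal term the discrete object that actually appears after summing the ODEs is $\mathcal{P}_{uh}[\hat h^N]$ with the \emph{empirical} measure $\hat h^N=\frac{1}{N}\sum_j\delta_{H_j}$, not the piecewise constant $h^N$; the paper inserts a short step showing $d_{W^1}(h^N,\hat h^N)\to 0$ so that both share the limit $h$, and then uses an optimal-plan argument to pass to the limit (Proposition \ref{prop convergenza una specie}). Your proposed route via strong $L^1$ convergence of $h^N$ would also work, but you should be aware of this mismatch when you write out the discrete formulation.
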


Note that, the well-posedness for equations with a nonlinear \emph{space-}dependent mobility  $D$ degenerating at the boundary, as in \eqref{eq:PDE_main}, has not been subject yet of a complete and satisfactory theory, see \cite{AlNaTo,AnFa,FuPuTeTo}.

\subsection{Tools from Optimal Transport}\label{sec:opt_tra}
The purpose of this section is to collect and present some tools from Optimal Transport that will be useful in the sequel. We refer to \cite{AGS,S,V1} for an extensive treatment of the concepts mentioned in this section.  
In our setting, for arbitrary $t \in [0,T]$, the functions $u^N(t,\cdot)$ are all densities on $I$ with same mass, independently on $N$. The Wasserstein distance is the right notion of distance that allows us to evaluate how far from each other the two measures $u^N(t,\cdot)$ and $u^N(s,\cdot)$ are at different times $t,s \in [0,T]$. In this section we introduce a well known characterization of the Wasserstein distance in the one-dimensional setting, we refer to \cite{CT} for the detailed proof. 

For a fixed mass $\sigma>0$, we consider the space
\begin{equation*}
  \mathfrak{M}_\sigma = \bigl\{\mu \hbox{ Radon measure on $\R$ } \colon \mu\ge 0 \text{ and }\mu(\R)=\sigma \bigr\}.
\end{equation*}
Given $\mu\in \mathfrak{M}_\sigma$, we introduce the pseudo-inverse function $X_\mu \in L^1([0,\sigma];\R)$ as
\begin{equation}\label{eq:pseudoinverse}
X_\mu(z) = \inf \bigl\{ x \in \R \colon \mu((-\infty,x]) > z \bigr\}.
\end{equation}
In particular, if $\sigma=1$, then $\mathfrak{M}_1$ is the set of non-negative probability densities on $\R$ and it is possible to consider the one-dimensional \emph{$1$-Wasserstein distance} between each pair of densities $\rho_1,\rho_2\in \mathfrak{M}_1$. As shown in \cite{CT}, in the one dimensional setting the \emph{$1$-Wasserstein distance} can be equivalently defined in terms of the $L^1$-distance between the respective pseudo-inverse mappings as
\[
d_{W^1}(\rho_1,\rho_2) = \|X_{\rho_1}-X_{\rho_2}\|_{L^1([0,1];\R)}.
\]
For generic $\sigma >0$, we recall the definition for the \emph{scaled $1$-Wasserstein distance} between $\rho_1,\rho_2\in \mathfrak{M}_\sigma$ as
\begin{equation}\label{eq:wass_equiv0}
    d_{W^1,\sigma}(\rho_1,\rho_2)= \|X_{\rho_1}-X_{\rho_2}\|_{L^1([0,\sigma];\R)},
\end{equation}
see \cite{CT,V1}.

\section{Proof of the main result}\label{sec:conti}
This section is devoted to the proof of Theorem \ref{thm:main}. We will first focus on the $L^1$ compactness for the piecewise constant interpolation $(u^N(t,w))_N$ for $u=f,l,n,g$, defined in \eqref{rhoN}. The main tool available in this direction is a generalized version of the Aubin-Lions Lemma \cite{RoSa}, that we report here in a simplified version adapted to our setting.

\begin{thm}\label{Aubin Lions}

Let $T >0$ be fixed and $\rho^N(t,\cdot): [a,b] \to \mathbb{R}$ be a sequence of non negative probability densities for every $t \in [0,T]$ and for every $N \in \mathbb{N}$. Moreover, assume that $\| \rho^N(t,\cdot )\|_{L^\infty} \leq M $ for some constant $M$ independent on $t$ and $N$. If 
\begin{itemize}
    \item[I)] $\sup_N \int_0^T TV[\rho^N(t,\cdot)]dt < \infty$,
    \item[II)] $d_{W^1}(\rho^N(t,\cdot),\rho^N(s,\cdot)) < C|t-s|$ for all $t,s \in [0,T]$, where $C$ is a positive constant independent on $N$,
\end{itemize}
then $\rho^N$ is strongly relatively compact in $L^1([0,T]\times[a,b])$.
\end{thm}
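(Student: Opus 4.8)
The plan is to establish the claim through the Fr\'echet--Kolmogorov--Riesz compactness criterion in $L^1([0,T]\times[a,b])$. Since each $\rho^N(t,\cdot)$ is a probability density with $\|\rho^N(t,\cdot)\|_{L^\infty}\le M$ uniformly in $t$ and $N$, on the finite-measure domain $[0,T]\times[a,b]$ the family $\{\rho^N\}$ is automatically equi-bounded in $L^1$ and equi-integrable. Hence it suffices to control the joint translation modulus uniformly in $N$, i.e. to show that
\[
\sup_N \int_0^{T}\!\!\int_a^b \big|\rho^N(t+\tau,w+h)-\rho^N(t,w)\big|\,dw\,dt \longrightarrow 0 \quad\text{as } (\tau,h)\to(0,0).
\]
I would treat the spatial and temporal increments separately, the former using hypothesis (I) alone and the latter requiring both (I) and (II).

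The spatial increment is immediate from the integrated total-variation bound. For fixed $t$, the one-dimensional $BV$ estimate gives $\|\rho^N(t,\cdot+h)-\rho^N(t,\cdot)\|_{L^1}\le |h|\,TV[\rho^N(t,\cdot)]$ (up to a negligible boundary sliver of measure $|h|$ where the integrand is bounded by $2M$), so that
\[
\int_0^T \big\|\rho^N(t,\cdot+h)-\rho^N(t,\cdot)\big\|_{L^1}\,dt \;\le\; |h|\,\sup_N\int_0^T TV[\rho^N(t,\cdot)]\,dt,
\]
which vanishes as $h\to0$ uniformly in $N$ by (I).

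The temporal increment is the heart of the argument, and the step I expect to be the main obstacle, because the $L^1$ and $d_{W^1}$ distances are not comparable in general: hypothesis (II) only provides weak-in-time (Wasserstein) continuity. The bridge is a one-dimensional interpolation inequality of the form
\[
\|\rho_1-\rho_2\|_{L^1([a,b])}^2 \;\le\; C\big(TV[\rho_1]+TV[\rho_2]\big)\, d_{W^1}(\rho_1,\rho_2),
\]
valid for densities of equal mass bounded in $L^\infty\cap BV$, which one proves from the characterization of $d_{W^1}$ as the $L^1$ distance between the pseudo-inverses (equivalently, the cumulative distributions) by comparing $\rho_1-\rho_2$ with its piecewise averages on a mesh of size $\delta$ and optimizing in $\delta$. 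Applying it to $\rho^N$ at times $t$ and $t+\tau$ and inserting (II), namely $d_{W^1}(\rho^N(t+\tau,\cdot),\rho^N(t,\cdot))\le C\tau$, yields
\[
\big\|\rho^N(t+\tau,\cdot)-\rho^N(t,\cdot)\big\|_{L^1} \;\le\; C\,\tau^{1/2}\big(TV[\rho^N(t+\tau,\cdot)]+TV[\rho^N(t,\cdot)]\big)^{1/2}.
\]

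Integrating in $t$ over $[0,T-\tau]$ and applying Cauchy--Schwarz in time, the right-hand side is bounded by $C\,\tau^{1/2}\,T^{1/2}\big(2\sup_N\int_0^T TV[\rho^N(t,\cdot)]\,dt\big)^{1/2}$, which is finite by (I) and tends to $0$ as $\tau\to0$, uniformly in $N$. Combining the spatial and temporal estimates with the uniform $L^1$ bound and equi-integrability, the Fr\'echet--Kolmogorov--Riesz theorem delivers relative compactness of $\{\rho^N\}$ in $L^1([0,T]\times[a,b])$. Everything beyond the interpolation inequality --- the $BV$ spatial estimate, the Cauchy--Schwarz integration in time, and the verification of the Fr\'echet--Kolmogorov hypotheses --- is routine; the essential point is the use of the equal-mass assumption and the one-dimensional structure to upgrade Wasserstein-in-time continuity to genuine $L^1$-in-time continuity, at the cost of a half power of $\tau$ and a factor controlled by the integrated total variation.
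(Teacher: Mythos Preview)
The paper does not prove this theorem: it is stated as a simplified version of the generalized Aubin--Lions lemma of Rossi and Savar\'e and used as a black box. Your proposal therefore supplies something the paper omits, and the argument is correct. The crucial interpolation inequality $\|\rho_1-\rho_2\|_{L^1}^2 \le C\big(TV[\rho_1]+TV[\rho_2]\big)\,d_{W^1}(\rho_1,\rho_2)$ is indeed valid in one dimension: setting $g=F_1-F_2$ for the difference of the cumulative distribution functions, it is the Gagliardo--Nirenberg-type bound $\|g'\|_{L^1}^2 \le C\,\|g\|_{L^1}\,\|g''\|_{\mathcal M}$, which follows from the mollify-and-optimize argument you sketch (with a harmless additive constant from the endpoint values, controlled by the uniform $L^\infty$ bound $M$). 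The subsequent Cauchy--Schwarz integration in time and the spatial $BV$ translation estimate are routine, and Fr\'echet--Kolmogorov--Riesz then yields the conclusion.

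By contrast, the Rossi--Savar\'e framework cited in the paper is abstract: one takes the Banach space $B=L^1([a,b])$, the coercive functional $\mathcal F[\rho]=TV[\rho]$ (with sublevel sets relatively compact in $B$ thanks to the uniform $L^\infty$ bound), and the lower-semicontinuous pseudo-distance $g=d_{W^1}$, and invokes a general compactness theorem in $L^1(0,T;B)$. Your route is more elementary and transparently one-dimensional, trading generality for an explicit modulus ($\tau^{1/2}$) of $L^1$-in-time continuity; the abstract approach would apply, for instance, in settings where no such clean interpolation between the weak metric and the strong norm is available.
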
  

As a consequence, the desired compactness of $(u^N)_N$ relies on showing a uniform bound of the total variation and the equi-continuity of the $1-$Wasserstein distance with respect to the time variable. 

We first focus on the bounds on the Total Variation.

\begin{prop}[BV bound for Systems]\label{LemmastimaBV system}
Let $T>0$ and $\bar{u}$ be under assumptions $(In1)$ and $(In2)$. Then the discrete densities $u^N$ defined in \eqref{rhoN} satisfy
\begin{equation}\label{stimaBV system}
TV \left[u^N(t, \cdot)\right] \leq TV \left[\bar{u}\right]C, \qquad \forall \,\, t \in[0, T ],\,\, \forall\,\, N \in \mathbb{N},
\end{equation}
where the constant $C$ is such that $C=C(D_u,\phi_u, \Theta_u,T)$.
\end{prop}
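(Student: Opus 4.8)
The plan is to derive a Grönwall-type differential inequality for the discrete total variation and then integrate it. Since $u^N(t,\cdot)$ is piecewise constant and, by Lemma \ref{lem:minmax system}, the ordering $W_0<W_1<\cdots<W_N$ is preserved on $[0,T]$, its total variation (the density being extended by zero outside $[-1,1]$) is the sum of the interface jumps plus the two boundary jumps at the fixed endpoints $W_0=-1$, $W_N=1$,
\[
TV[u^N(t,\cdot)] = u_0(t) + \sum_{i=1}^{N-1}|u_i(t) - u_{i-1}(t)| + u_{N-1}(t).
\]
Crucially, the total variation of a piecewise constant function depends only on the heights $u_i$ and not on the interface positions, so differentiating in time only sees $\dot u_i$. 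Moreover Lemma \ref{lem:minmax system} keeps the gaps $W_{i+1}-W_i$ bounded away from $0$, so the right-hand sides in \eqref{eq:particles_gen} are Lipschitz, each $u_i(\cdot)$ is $C^1$, and $t\mapsto TV[u^N(t,\cdot)]$ is Lipschitz hence differentiable a.e. First I would write, for a.e. $t$,
\[
\ddt TV[u^N(t,\cdot)] = \dot u_0 + \sum_{i=1}^{N-1}\sign(u_i - u_{i-1})(\dot u_i - \dot u_{i-1}) + \dot u_{N-1},
\]
and split each $\dot u_i = \dot u_i^d + \dot u_i^{p}$ as in \eqref{eq:uidot1}--\eqref{eq:uidot2}, treating the two contributions separately. (The non-smoothness of $|\cdot|$ at indices where $u_i=u_{i-1}$ is handled by a standard one-sided argument.)

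For the nonlocal part I would exploit that $\dot W_i^{p}=v(W_i)$ is the sampling of the single velocity field $v(w)=-\sum_{h\in\mathfrak{H}}\sigma_h^N\sum_j P_{uh}(w,H_j)(w-H_j)$, which is $C^1$ with $\|v'\|_\infty$ and $\|v''\|_\infty$ controlled by $\Theta_u$ from \eqref{eq:theta}; here $\|v''\|_\infty$ is exactly where the Lipschitz bound on $\partial_1 P_{uh}$ entering $\Theta_u$ is needed. Writing $\dot u_i^{p}=-u_i\,[v(W_{i+1})-v(W_i)]/(W_{i+1}-W_i)$ and expanding $\dot u_i^{p}-\dot u_{i-1}^{p}$ into a term proportional to $u_i-u_{i-1}$ (bounded by $\|v'\|_\infty|u_i-u_{i-1}|$) and a term carrying a second divided difference of $v$ (bounded by $\tfrac12\|v''\|_\infty(W_{i+1}-W_{i-1})$ times $u_{i-1}$), the first sums to $\|v'\|_\infty\,TV[u^N]$, while the second sums, via $\sum_i u_{i-1}(W_{i+1}-W_{i-1})\le C\sigma_u$ and the uniform bounds \eqref{eq: minmaxg system}, to a constant multiple of $\Theta_u\sigma_u$. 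Since the boundary jumps force $TV[u^N(t,\cdot)]\ge 2e^{-c_uT}m_u>0$, this additive constant is reabsorbed as $C\,TV[u^N]$; altogether the nonlocal contribution is $\le C(\Theta_u,T)\,TV[u^N(t,\cdot)]$.

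The heart of the matter is the diffusive part, which I expect to be the main obstacle. The scheme \eqref{eq:z_d}, \eqref{eq:uidot2} is the Lagrangian discretization of $\partial_t u=\partial_w(D_u^2\,\partial_w\phi_u(u))$, which for a \emph{constant} mobility is a monotone three-point scheme and hence TV-nonincreasing; the corresponding sign analysis (the same discrete comparison structure already exploited in Lemma \ref{lem:minmax system} to sign $\dot W^d_{i+1}-\dot W^d_i$ via the monotonicity of $\phi_u$ from (Dif)) shows that the leading contribution of $\sum_i\sign(u_i-u_{i-1})(\dot u_i^d-\dot u_{i-1}^d)$ is nonpositive. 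The genuinely new difficulty is that $D_u^2$ is space-dependent and degenerates at $w=\pm1$: the discrete fluxes $D_u^2(W_i)(\phi_u(u_{i-1})-\phi_u(u_i))$ no longer telescope exactly, leaving commutator-type remainders built from the increments $D_u^2(W_{i+1})-D_u^2(W_i)$. The plan is to control these via $D_u\in C^2$ and the uniform bound on $\partial_w D_u^2$ from (D1)--(D2), together with the two-sided bounds \eqref{eq: minmax system}--\eqref{eq: minmaxg system} of Lemma \ref{lem:minmax system} (which keep the gaps non-degenerate and the values $\phi_u(u_i)$ in a compact range where $\phi_u$ is Lipschitz): mean-value estimates then convert each remainder into a term $\le C(D_u,\phi_u,T)|u_i-u_{i-1}|$ plus a constant multiple of $\sigma_u$, again reabsorbable into $TV[u^N]$ through the strictly positive boundary jumps.

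Combining the two estimates yields $\ddt TV[u^N(t,\cdot)]\le C\,TV[u^N(t,\cdot)]$ for a.e. $t$, with $C=C(D_u,\phi_u,\Theta_u,T)$ (the fixed data $m_u,M_u,\sigma_u$ entering only through the Min-Max bounds). Grönwall's lemma integrated from $0$ to $t$, together with $u^N(0,\cdot)=\bar u^N$ and the fact that the equal-mass atomization \eqref{eq:ini_part} is TV-contractive, i.e. $TV[\bar u^N]\le TV[\bar u]$, then gives the claimed bound \eqref{stimaBV system}.
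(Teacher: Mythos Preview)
Your overall strategy --- differentiate $TV[u^N]$, split into diffusive and nonlocal contributions, close by Gr\"onwall --- is exactly the paper's, and your treatment of the nonlocal part (via $\|v'\|_\infty$ and $\|v''\|_\infty$) is a repackaging of the paper's Lipschitz/mean-value estimates on $P_{uh}$ and $\partial_1 P_{uh}$.

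The gap is in the diffusive part. You anticipate that the space-dependent mobility $D_u^2$ will produce commutator remainders built from $D_u^2(W_{i+1})-D_u^2(W_i)$, to be controlled via (D2). This is both unnecessary and, as you sketch it, would not close. After summation by parts the diffusive contribution reads $\sum_i s_i(t)\,\dot u_i^d(t)$ with $s_i=\sign(u_i-u_{i+1})-\sign(u_{i-1}-u_i)$, and the paper observes that each summand is already $\le 0$: $s_i\ne 0$ only at local extrema of $i\mapsto u_i$, and e.g.\ at a local maximum
\[
s_i\,\dot u_i^d \;=\; \frac{2u_i^2}{(\sigma_u^N)^2}\Big[D_u^2(W_i)\big(\phi_u(u_{i-1})-\phi_u(u_i)\big)-D_u^2(W_{i+1})\big(\phi_u(u_i)-\phi_u(u_{i+1})\big)\Big]\;\le\;0,
\]
since each bracketed term carries the right sign by the monotonicity of $\phi_u$ and the mere \emph{nonnegativity} of $D_u^2$. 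No regularity of $D_u$ beyond $D_u\ge 0$ enters here; (D2) plays no role in this proposition.

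Conversely, if you insist on splitting off a ``commutator'' $(D_u^2(W_{i+1})-D_u^2(W_i))(\phi_u(u_i)-\phi_u(u_{i+1}))$ inside $\dot u_i^d$ and bounding it separately, the prefactor $u_i^2/(\sigma_u^N)^2$ only sheds one power of $\sigma_u^N$ (from $|D_u^2(W_{i+1})-D_u^2(W_i)|\le \|\partial_w D_u^2\|_\infty\,\sigma_u^N/u_i$), leaving a term of order $\tfrac{C}{\sigma_u^N}\,|u_i-u_{i+1}|$ --- not $C\,|u_i-u_{i-1}|$ as you assert. Summed over $i$ this gives $\tfrac{C}{\sigma_u^N}\,TV[u^N]\sim N\cdot TV$, so the Gr\"onwall constant blows up with $N$. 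The key observation you are missing is that the sign argument already absorbs the variable (nonnegative) mobility for free, with no remainder.
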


\begin{proof}
The proof is based on a Gronwall type estimate for $TV [u^N]$.
To shorten the notation in the following computations we introduce the auxiliar functions 
\begin{equation*}
    s_i(t) = \sign(u_i(t)-u_{i+1}(t))-\sign(u_{i-1}(t)-u_i(t)),
\end{equation*}
for $i=1,...,N-1$. Standard computations lead to the following expression 
\begin{align*}
    & \frac{d}{dt}TV \left[u^N(t, \cdot)\right] = \dot{u}_0(t)+\dot{u}_{N-1}(t)+\sum_{i=1}^{N-2}\sign\left(u_{i+1}(t)-u_i(t)\right)\left(\dot{u}_{i+1}(t)-\dot{u}_i(t)\right)\\
    &= \dot{u}_0(t)+\dot{u}_{N-1}(t)
    + \sign(u_0(t)-u_1(t))\dot{u}^d_0(t) - \sign(u_{N-2}(t)-u_{N-1}(t))\dot{u}^d_{N-1}(t) \\
& \quad    +\sum_{i=1}^{N-2}\sign\left(u_{i+1}(t)-u_i(t)\right)\left(\dot{u}^{p}_{i+1}(t)-\dot{u}^{p}_i(t)\right) +\sum_{i=1}^{N-2}s_i(t)\dot{u}^d_i(t),
\end{align*}
where we used \eqref{eq:uidot1}. 
We first show that the boundary terms involving $\dot{u}_0$ and $\dot{u}_{N-1}$ are uniformly bounded with respect to $N$. Consider, for example, the contribution of the left boundary term
\[\dot{u}_0(t) + \sign(u_0(t) - u_1(t)) \dot{u}_0^d.\] 
When  
$u_0(t) \leq u_1(t)$, the diffusive part cancels out and only the term $\dot{u}_0^{p}$ survives.
If, instead, $u_1(t) \leq u_0(t)$, from the monotonicity of $\phi_u$
we deduce that 
\[
\dot{u}_0 + \sign(u_0 - u_1) \dot{u}_0^d = \dot{u}_0^{p} + 2\dot{u}_0^d = \dot{u}_0^{p} - 2 u_0 \frac{ D^2(W_1)(\phi_u(u_0) - \phi_u(u_1))}{\sigma_u^N(W_1 - W_0)} \leq \dot{u}_0^{p}.
\]
Then, in both the cases, it is enough to show that the non-local part of $\dot{u}_0$ is uniformly bounded. 
From the positivity of $P_{uh}$ and the bounds \eqref{eq: minmax system}, it is easy to see that 

\begin{align*}
\dot{u}_0^{p} &= \frac{u_0}{W_1 - W_0} \sum_{h \in \mathfrak{H}} \sigma_h^N \sum_{j=0}^{N} P_{uh}(W_1,H_j)(W_1 - H_j) \\
&\leq \frac{u_0}{W_1 - W_0} \sum_{h \in \mathfrak{H}} \sigma_h^N \sum_{j : H_j < W_1}P_{uh}(W_1,H_j)(W_1 - H_j) \\
&\leq \frac{u_0}{W_1 - W_0} \sum_{h \in \mathfrak{H}} \sigma_h^N \sum_{j : H_j < W_1}P_{uh}(W_1,H_j)(W_1 - W_0) \\
&\leq u_0 \sum_{h \in \mathfrak{H}} [\sigma_h \|P_{uh}\|_{L^\infty}].
\end{align*}

With a symmetric argument, one can see that the term also satisfies
\[\dot{u}_{N-1} - \sign(u_{N-2} - u_{N-1})\dot{u}_{N-1}^d \leq u_{N-1} \sum_{h \in \mathfrak{H}} [\sigma_h \|P_{uh}\|_{L^\infty}]. \] 

Let us now prove that  
\begin{equation}\label{D_i system}
   \sum_{i=1}^{N-2}s_i(t)\dot{u}_i^d(t) \leq 0. 
\end{equation}
If at time $t$ one has $u_{i+1}(t) \leq u_i(t) \leq u_{i-1}(t)$, or $u_{i+1}(t) \geq u_i(t) \geq u_{i-1}(t)$ then $s_i(t)=0$. When, instead, $u_{i \pm 1}(t)$ are both bigger or both smaller than $u_i(t)$, the monotonicity of $\phi_u$ implies the desired 
estimate. Indeed, if we assume for example that $u_{i \pm 1}(t) < u_i(t)$, then $s_i(t)= 2$ and, recalling the definition of $\dot{u}_i^d$, we get 
\[
s_i\dot{u}_i^d = 2 \frac{u_i^2}{\left(\sigma_u^N\right)^2} \left[D_u^2(W_{i}) (\phi_u(u_{i-1}) - \phi_u(u_{i}))
- D_u^2(W_{i+1})(\phi_u(u_i) - \phi_u(u_{i+1})) \right] <0.
\]
The other case $u_{i \pm 1}(t) > u_i(t)$ follows analogously. To conclude, we are left to estimate the term concerning the non-local contribution. Using the definition of $\dot{u}_i^{p}$ in\eqref{eq:uidot2}, we deduce that 
\begin{align}\label{nonlocal 1}
\nonumber
    &\sum_{i=1}^{N-2} \sign(u_{i+1}(t) - u_i(t))(\dot{u}_{i+1}^{p}(t) - \dot{u}_i^{p}(t)) \\
\nonumber
  &\leq \sum_{i=1}^{N-2} |u_{i+1}(t) - u_i(t)| \frac{|\dot{W}_{i+2}^{p}(t) - \dot{W}_{i+1}^{p}(t)|}{W_{i+2}(t) - W_{i+1}(t)} \\
  &+ \sum_{i=1}^{N-2} u_i(t) \left| \frac{\dot{W}_{i+2}^{p}(t) - \dot{W}_{i+1}^{p}(t)}{W_{i+2}(t) - W_{i+1}(t)} - \frac{\dot{W}_{i+1}^{p}(t) - \dot{W}_{i}^{p}(t)}{W_{i+1}(t) - W_{i}(t)} \right|.
\end{align}
Thanks to the assumption (P), we can easily bound the term in \eqref{nonlocal 1} with the total variation of $u^N$
\begin{equation}\label{nonlocal 3}
  \sum_{i=1}^{N-2} |u_{i+1}(t) - u_i(t)| \frac{|\dot{W}_{i+2}^{p}(t) - \dot{W}_{i+1}^{p}(t)|}{W_{i+2}(t) - W_{i+1}(t)}  \leq  \Theta_{u}TV[u^N(t,\cdot)] . 
\end{equation}
On the other hand, since the functions $P_{uh}$ are Lipschitz, there exist some $\bar{W}_{i+1}^{i+2} \in [W_{i+1}, W_{i+2}]$ and $\bar{W}_{i}^{i+1} \in [W_{i}, W_{i+1}]$ such that
\begin{align*}
    &\frac{\dot{W}_{i+2}^{p} - \dot{W}_{i+1}^{p}}{W_{i+2} - W_{i+1}} - \frac{\dot{W}_{i+1}^{p} - \dot{W}_{i}^{p}}{W_{i+1} - W_{i}} = \\
    = &- \sum_{h \in \mathfrak{H}} \sigma_h^N \sum_{j=0}^{N} \frac{P_{uh}(W_{i+2},H_j) (W_{i+2} - H_j) - P_{uh} (W_{i+1},H_j)(W_{i+1} - H_j)}{W_{i+2} - W_{i+1}} \\
    & +  \sum_{h \in \mathfrak{H}} \sigma_h^N \sum_{j=0}^{N} \frac{P_{uh}(W_{i+1},H_j) (W_{i+1} - H_j) - P_{uh} (W_{i},H_j)(W_{i} - H_j)}{W_{i+1} - W_i}\\
    \leq &\sum_{h \in \mathfrak{H}} \sigma_h^N \sum_{j=0}^{N} \left|  \partial_1 P_{uh}(\bar{W}_{i+1}^{i+2},H_j)(\bar{W}_{i+1}^{i+2} - H_j) - \partial_1 P_{uh}(\bar{W}_{i}^{i+1},H_j)(\bar{W}_{i}^{i+1} - H_j)\right|  \\
    &+ \sum_{h \in \mathfrak{H}} \sigma_h^N \sum_{j=0}^{N} \big| P_{uh}(\bar{W}_{i+1}^{i+2},H_j)  - P_{uh}(\bar{u}_{i}^{i+1},H_j) \big| \\
    \leq &(W_{i+2} -W_i)  \sum_{h \in \mathfrak{H}} \big(2 Lip[\partial_1 P_{uh}] + \| \partial_1 P_{uh}\|_{L^\infty} + Lip[P_{uh}]\big).
\end{align*}
Using now the upper bound of \eqref{eq: minmax system} in the above estimate, we can handle also the second term of \eqref{nonlocal 1} and get 
\begin{equation}\label{nonlocal 4}
    \sum_{i=1}^{N-2} u_i(t) \left| \frac{\dot{W}_{i+2}^{p}(t) - \dot{W}_{i+1}^{p}(t)}{W_{i+2}(t) - W_{i+1}(t)} - \frac{\dot{W}_{i+1}^{p}(t) - \dot{W}_{i}^{p}(t)}{W_{i+1}(t) - W_{i}(t)} \right| \leq 4\frac{e^{c_u T}}{m_u} \Theta_u.
\end{equation}

Summarizing, \eqref{nonlocal 3} and \eqref{nonlocal 4}, together with \eqref{D_i system} and the estimates on the boundary terms, imply that 
$$
\frac{d}{dt} TV[u^N(t,\cdot)] \leq \Theta_u\left(4\frac{e^{c_u T}}{m_u} +   + TV[u^N(t,\cdot)] \right)
$$
and the conclusion follows via a Gronwall type argument. 
\end{proof}

We then show that the sequence $u^N(t,w)$ satisfies and equi-continuity in time with respect to the $1$-Wasserstein distance, the proof realise on a, right now, quite standard argument introduced in \cite{BUMI}, that we report here for completeness.  

\begin{prop}\label{Lemmacontinuity in 1W system}
Let $u^{N}$ be any of $f^{N},l^{N},r^{N},q^{N}$, defined in \eqref{rhoN}.
Let $T > 0$ and $\bar{u}$ under assumptions $(In1)$ and $(In2)$. If $D_u$, $\phi_u$, $P_{uh}$ are under the assumptions $(D1)$, $(D2)$, $(Dif)$ and $(P)$ respectively, then there exists a constant $C>0$ such that 
\begin{equation}\label{continuity in 1W system}
    d_{W^1}(u^{N}(t,\cdot),u^{N}(s,\cdot)) \leq C|t-s| \qquad \mbox{for all $s,t \in [0,T]$ and $N \in \mathbb{N}$.}
\end{equation}
\end{prop}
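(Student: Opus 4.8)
The plan is to exploit the one-dimensional characterisation \eqref{eq:wass_equiv0} of the Wasserstein distance through pseudo-inverses. For fixed $t$, the cumulative distribution of $u^N(t,\cdot)$ is continuous, strictly increasing and piecewise affine on $I$, taking the value $i\sigma_u^N$ at $W_i(t)$; hence its pseudo-inverse $X_{u^N(t)}\in L^1([0,\sigma_u])$ is the piecewise affine interpolant of the nodes $W_i(t)$, namely $X_{u^N(t)}(z)=W_i(t)+\frac{z-i\sigma_u^N}{\sigma_u^N}(W_{i+1}(t)-W_i(t))$ on $[i\sigma_u^N,(i+1)\sigma_u^N)$. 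Since the $W_i$ are $C^1$ in time (the right-hand side of \eqref{eq:particles_gen} stays Lipschitz thanks to Lemma \ref{lem:minmax system}), so is $z\mapsto X_{u^N(\cdot)}(z)$, and by the fundamental theorem of calculus together with Fubini I would estimate
\[
d_{W^1,\sigma_u}(u^N(t,\cdot),u^N(s,\cdot)) = \|X_{u^N(t)}-X_{u^N(s)}\|_{L^1([0,\sigma_u])} \leq \int_s^t \|\partial_\tau X_{u^N(\tau)}\|_{L^1([0,\sigma_u])}\,d\tau.
\]
The whole statement then reduces to a bound on $\|\partial_\tau X_{u^N}\|_{L^1}$ that is uniform in $\tau$ and $N$.

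Next I would compute $\partial_\tau X_{u^N}$, which on each cell is the affine interpolant of $\dot W_i$ and $\dot W_{i+1}$; integrating its absolute value over $[0,\sigma_u]$ and recalling $\dot W_0=\dot W_N=0$ gives, via the splitting \eqref{eq:particles_gen},
\[
\|\partial_\tau X_{u^N}\|_{L^1([0,\sigma_u])} \leq 2\sigma_u^N\sum_{i=1}^{N-1}|\dot W_i^d(\tau)| + 2\sigma_u^N\sum_{i=1}^{N-1}|\dot W_i^{p}(\tau)|.
\]
The nonlocal contribution is harmless: each velocity $\dot W_i^{p}$ in \eqref{eq:z_n} is pointwise controlled by $2\sum_{h\in\mathfrak{H}}\sigma_h\|P_{uh}\|_{L^\infty}$, since $P_{uh}$ is bounded by (P), $|W_i-H_j|\leq 2$ on $I$ and $\sigma_h^N$ times the number of particles of species $h$ is $\simeq\sigma_h$; multiplying by $\sigma_u^N$ and summing over the $N$ cells produces a constant independent of $N$.

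The delicate term is the diffusive one, because $\dot W_i^d$ in \eqref{eq:z_d} carries the singular prefactor $1/\sigma_u^N$, so no uniform pointwise bound on $|\dot W_i^d|$ is available. The key observation — and the heart of the argument — is that this singular factor is cancelled exactly once one sums and recognises the telescoping (total variation) structure:
\[
\sigma_u^N\sum_{i=1}^{N-1}|\dot W_i^d(\tau)| = \sum_{i=1}^{N-1}D_u^2(W_i(\tau))\,|\phi_u(u_{i-1}(\tau))-\phi_u(u_i(\tau))| \leq \|D_u\|_{L^\infty}^2\,Lip[\phi_u]\sum_{i=1}^{N-1}|u_{i-1}(\tau)-u_i(\tau)|,
\]
where I use that $\phi_u$ is Lipschitz by (Dif) and $D_u$ is bounded by (D1). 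The last sum is bounded by $TV[u^N(\tau,\cdot)]$, which by Proposition \ref{LemmastimaBV system} is controlled uniformly in $\tau$ and $N$ in terms of $TV[\bar u]$. Collecting the two bounds yields $\|\partial_\tau X_{u^N}\|_{L^1}\leq C$ with $C$ depending only on $\sigma_h$, $\|P_{uh}\|_{L^\infty}$, $\|D_u\|_{L^\infty}$, $Lip[\phi_u]$ and the constant of Proposition \ref{LemmastimaBV system}; integrating in $\tau$ then gives \eqref{continuity in 1W system}. The main obstacle is precisely this diffusive estimate: everything hinges on unveiling the total-variation structure hidden in $\sigma_u^N\sum_i|\dot W_i^d|$ and invoking the already-established BV bound, without which the $1/\sigma_u^N$ factor would be fatal.
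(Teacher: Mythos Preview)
Your argument is correct and follows essentially the same route as the paper: both compute the piecewise-affine pseudo-inverse, reduce the Wasserstein estimate to a bound on $\sigma_u^N\sum_i|\dot W_i|$, control the nonlocal part trivially from (P), and for the diffusive part exploit the cancellation of the $1/\sigma_u^N$ prefactor to obtain $\sum_i D_u^2(W_i)|\phi_u(u_{i-1})-\phi_u(u_i)|\le \|D_u\|_{L^\infty}^2 Lip[\phi_u]\,TV[u^N]$, then invoke Proposition~\ref{LemmastimaBV system}. The only cosmetic difference is that you bound $\int_s^t\|\partial_\tau X_{u^N}\|_{L^1}\,d\tau$ whereas the paper expands $X_{u^N(t)}-X_{u^N(s)}$ directly before integrating in time; the two are equivalent.
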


\begin{proof}
The proof of this result is very standard in the one dimensional setting, and we will use the tools introduced in  Section \ref{sec:opt_tra}. The pseudo-inverse map associated to the piecewise constant probability density $u^N$ can be very easily computed and it is given by
\[
  X_{u^{N}(t,\cdot)}(x) = \sum_{i=0}^{N-1} \left( W_i(t) + \left(w - \sigma_u^N i\right) \frac{1}{u_i(t)}\right)\chi_{J_u^N(i)}(w),
\]
where $J_u^N(i)=\left[\sigma_u^N i, \sigma_u^N(i+1)\right)$. For any $t>s$ we have
\begin{align*}
    d_{W^1}(u^{N}(t,\cdot), &u^{N}(s,\cdot)) = \| X_{u^{N}(t,\cdot)} - X_{u^{N}(s,\cdot)}\|_{L^1([0,1])} \\
    &\leq \sum_{i=0}^{N-1} \int_{J_u^N(i)} \left|W_i(t) - W_i(s) + \left(w - \sigma_u^N i\right)\big(\frac{1}{u_i(t)} - \frac{1}{u_i(s)}\big)\right|dw \\
    &\leq \sigma_u^N\sum_{i=0}^{N-1}|W_i(t) - W_i(s)| + \frac{(\sigma_u^N)^2}{2} \sum_{i=0}^{N-1} \int_s^t \left| \frac{d}{d\tau}\frac{1}{u_i(\tau)}\right| d\tau \\
    &\leq 3\sigma_u^N \sum_{i=0}^{N -1} \int_s^t |\dot{W}_i(\tau)|d\tau\\
    &\leq 3 \|D_u^2\|_{L^\infty} Lip[\phi_u] (t-s)\sum_{i=0}^{N-1} |W_{i+1} - W_i| + 3 (t-s)\Theta_{u} \\
    &\leq C(D_u,\phi_u,\bar{u},P_{uh}) (t-s).
\end{align*}
where we have used the assumption on the uniform bound of both on the interaction potentials $P_{uh}$ and the diffusivity $D_u$, and the estimate \eqref{stimaBV system}.
\end{proof}

Gathering together Theorem \ref{Aubin Lions} with
Propositions \ref{LemmastimaBV system} and \ref{Lemmacontinuity in 1W system}, we conclude the following.

\begin{cor}\label{cmpctness}
Let $u^{N}$ be any of $f^{N},l^{N},r^{N},q^{N}$, defined in \eqref{rhoN}. Then there exists some $u \in L^1 \cap L^{\infty}(I_T)$ such that $\|u^{N} - u\|_{L^1} \to 0$ as $N \to \infty$.
\end{cor}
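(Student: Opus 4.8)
The plan is to recognize this statement as the immediate combination of the three preceding results, once the densities are normalized to unit mass so that the abstract Theorem \ref{Aubin Lions} (which is stated for probability densities) becomes directly applicable. First I would set $\rho^N := u^N/\sigma_u$, so that each $\rho^N(t,\cdot)$ is a non-negative density of unit mass on $I=[-1,1]$. The uniform $L^\infty$ bound needed in Theorem \ref{Aubin Lions} follows from the Min--Max Principle: by \eqref{eq: minmaxg system} we have $0\le u^N(t,w)\le e^{c_uT}M_u$ for all $t,w,N$, hence $\|\rho^N(t,\cdot)\|_{L^\infty}\le e^{c_uT}M_u/\sigma_u =: M$, a bound independent of $t$ and $N$.

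Next I would verify the two structural hypotheses. For the total variation bound, Proposition \ref{LemmastimaBV system} gives $TV[u^N(t,\cdot)]\le C\,TV[\bar u]$ uniformly in $t$ and $N$, with $TV[\bar u]<\infty$ thanks to (In1); since the total variation is one-homogeneous, $\int_0^T TV[\rho^N(t,\cdot)]\,dt\le TC\,TV[\bar u]/\sigma_u<\infty$ uniformly in $N$, which is hypothesis (I). For the time equi-continuity, Proposition \ref{Lemmacontinuity in 1W system} furnishes the Lipschitz-in-time estimate $d_{W^1}(u^N(t,\cdot),u^N(s,\cdot))\le C|t-s|$; from the explicit relation between the pseudo-inverses of $u^N$ and $\rho^N$ (rescaling the transport variable by $\sigma_u$ in \eqref{eq:wass_equiv0}) one obtains the same bound for $\rho^N$ with the constant divided by $\sigma_u$, which is hypothesis (II). Thus both hypotheses of Theorem \ref{Aubin Lions} hold for $(\rho^N)_N$.

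Applying Theorem \ref{Aubin Lions} then yields that $(\rho^N)_N$ is strongly relatively compact in $L^1(I_T)$, so along a subsequence $\rho^N\to\rho$ in $L^1(I_T)$, and consequently $u^N=\sigma_u\rho^N\to u:=\sigma_u\rho$ strongly in $L^1(I_T)$ along the same subsequence (which is the sense in which the statement is to be read). To see that $u\in L^\infty(I_T)$, I would extract a further subsequence converging almost everywhere; then the uniform pointwise bound $0\le u^N\le e^{c_uT}M_u$ passes to the limit, so $u$ inherits the same $L^\infty$ ceiling and lies in $L^1\cap L^\infty(I_T)$.

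Since all the nontrivial analysis has already been carried out in Lemma \ref{lem:minmax system} and Propositions \ref{LemmastimaBV system} and \ref{Lemmacontinuity in 1W system}, no genuinely new difficulty arises here; the only point demanding care is the bookkeeping of the mass normalization, namely ensuring that the $L^\infty$, total-variation and Wasserstein estimates transfer correctly between $u^N$ and the unit-mass densities $\rho^N$ to which the compactness theorem applies, and that the limit retains the uniform bound.
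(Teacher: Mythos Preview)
Your proposal is correct and follows essentially the same route as the paper, which simply records the corollary as the direct combination of Theorem~\ref{Aubin Lions} with Propositions~\ref{LemmastimaBV system} and~\ref{Lemmacontinuity in 1W system} without writing out any argument. Your explicit normalization $\rho^N=u^N/\sigma_u$ and the check that the $L^\infty$, BV and Wasserstein estimates transfer correctly are exactly the bookkeeping the paper leaves implicit.
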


We now focus on the identification of the limit given by Corollary \ref{cmpctness} as weak solution of the system \eqref{main_comp}. 
This will be, indeed, a straightforward consequence of Propositions \ref{prop convergenza una specie} and \ref{Propconsistencysystem}.

\begin{prop}\label{prop convergenza una specie}
Let $T > 0$ and $\bar{u}$ under assumptions $(In1)$ and $(In2)$. Let $D_u$, $\phi_u$, $P_{uh}$ be under the assumptions $(D1)$, $(D2)$, $(Dif)$ and $(P)$ respectively. Consider $u^{N}$ and $h^N$ be any two sequences among $\{f^{N},l^{N},r^{N},q^{N}\}$ and the respective $L^1$-strong limits $u,\,h$ provided by Corollary \ref{cmpctness}. Then for every $\zeta \in C^{\infty}_0((0,T)\times (-1,1))$ we get
\begin{equation}\label{convergenza una specie}
\begin{split}
&\lim_{N \to \infty} \int_0^T \int_I u^{N} \partial_t \zeta = \int_0^T \int_I u \partial_t\zeta, \\
&\lim_{N \to \infty} \int_0^T \int_I \phi_u (u^{N}) \partial_w(D_u^2(w)\partial_w\zeta) = \int_0^T \int_I \phi_u(u)\partial_w(D_u^2(w)\partial_w\zeta), \\
&\lim_{N \to \infty} \int_0^T \int_I u^{N}\sum_{h \in \mathfrak{H}}\mathcal{P}_{uh}[\hat{h}^{N}]\partial_w\zeta =  \int_0^T \int_I u\sum_{h \in \mathfrak{H}}\mathcal{P}_{uh}[h]\partial_w\zeta, 
\end{split}
\end{equation}
where $\hat{h}^{N} (t,w) := \frac{1}{N}\sum_{j=0}^{N}  \delta_{H_j(t)}(w)$ indicates the sequence of empirical measures. 

\end{prop}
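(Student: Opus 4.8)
The plan is to reduce every limit to the strong $L^1(I_T)$ convergence $u^N\to u$ (and $h^N\to h$) granted by Corollary \ref{cmpctness}, treating the three statements in increasing order of difficulty. The first two are essentially immediate; the third, nonlocal one carries all the work.

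Since $\zeta\in C^{\infty}_0$ and $D_u\in C^2$ by (D1), both $\partial_t\zeta$ and $\partial_w(D_u^2\partial_w\zeta)$ are bounded on $I_T$. Hence
\[\Big|\int_{I_T}(u^N-u)\,\partial_t\zeta\Big|\le \|\partial_t\zeta\|_{L^\infty}\,\|u^N-u\|_{L^1(I_T)}\to 0,\]
which is the first identity. For the second, (Dif) makes $\phi_u$ Lipschitz, so $\|\phi_u(u^N)-\phi_u(u)\|_{L^1(I_T)}\le Lip[\phi_u]\,\|u^N-u\|_{L^1(I_T)}\to 0$; multiplying by the bounded factor $\partial_w(D_u^2\partial_w\zeta)$ and integrating gives the claim.

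For the nonlocal limit I would first fix the mass normalization so that $\mathcal{P}_{uh}[\hat h^N]$ is exactly the discrete velocity in \eqref{eq:z_n} (each particle carrying mass $\sigma_h^N$, whence $\hat h^N$ has total mass $\sigma_h$ up to an $O(\sigma_h^N)$ endpoint correction). By (P) and $|v-w|\le 2$ on $I$, the kernel $g_w(v):=P_{uh}(w,v)(v-w)$ satisfies $\|g_w\|_{L^\infty}\le 2\|P_{uh}\|_{L^\infty}$ uniformly in $w$, so $\mathcal{P}_{uh}[\hat h^N]$ is bounded in $L^\infty(I_T)$ uniformly in $N$. Splitting
\[u^N\,\mathcal{P}_{uh}[\hat h^N]-u\,\mathcal{P}_{uh}[h]=(u^N-u)\,\mathcal{P}_{uh}[\hat h^N]+u\big(\mathcal{P}_{uh}[\hat h^N]-\mathcal{P}_{uh}[h]\big),\]
the first summand, multiplied by the bounded $\partial_w\zeta$ and integrated, is controlled by $\|u^N-u\|_{L^1(I_T)}\to 0$. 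Since $\|u(t,\cdot)\|_{L^1(I)}=\sigma_u$, it then suffices to prove that $\mathcal{P}_{uh}[\hat h^N]\to\mathcal{P}_{uh}[h]$ in $L^1$ in time, uniformly in $w$.

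This is the heart of the matter, and I would obtain it by interposing the reconstructed density, $\mathcal{P}_{uh}[\hat h^N]\to\mathcal{P}_{uh}[h^N]\to\mathcal{P}_{uh}[h]$. For the first arrow, $\hat h^N$ and $h^N$ encode the same particles: on each cell $[H_i,H_{i+1})$ the integral of $g_w$ against $h^N$ equals $\sigma_h^N$ times the cell-average of $g_w$, which differs from the nodal value $g_w(H_i)$ by at most $Lip[g_w]\,|H_{i+1}-H_i|$, with $Lip[g_w]\le 2\,Lip[P_{uh}]+\|P_{uh}\|_{L^\infty}$ uniform in $w$ by (P); summing over the cells (whose lengths add up to $|I|=2$, with the a priori control of Lemma \ref{lem:minmax system} guaranteeing the estimates are uniform in $t$) yields $\|\mathcal{P}_{uh}[\hat h^N]-\mathcal{P}_{uh}[h^N]\|_{L^\infty(I_T)}\le C\sigma_h^N=O(1/N)$. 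For the second arrow, the uniform bound on $g_w$ gives $|\mathcal{P}_{uh}[h^N](t,w)-\mathcal{P}_{uh}[h](t,w)|\le 2\|P_{uh}\|_{L^\infty}\|h^N(t,\cdot)-h(t,\cdot)\|_{L^1(I)}$ uniformly in $w$, and integrating in $t$ with Corollary \ref{cmpctness} gives $\int_0^T\|\mathcal{P}_{uh}[h^N]-\mathcal{P}_{uh}[h]\|_{L^\infty(I)}\,dt\to 0$. Summing over the finite tag-set $\mathfrak{H}$ closes the argument. I expect the delicate point to be precisely this treatment of the nonlocal term: one must pass between the two particle representations $\hat h^N$ and $h^N$ and extract from (P) the right uniform bounds on $g_w$, which is exactly where the interparticle estimates of Lemma \ref{lem:minmax system} (equivalently, the one-dimensional optimal-transport comparison of Section \ref{sec:opt_tra}) are the natural tool.
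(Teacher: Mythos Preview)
Your argument is correct and follows the same architecture as the paper's proof: the first two limits are disposed of by $L^1$ convergence, Lipschitz continuity of $\phi_u$ and boundedness of $\partial_w(D_u^2\partial_w\zeta)$, and the third is handled by the same splitting $(u^N-u)\mathcal{P}_{uh}[\hat h^N]+u(\mathcal{P}_{uh}[\hat h^N]-\mathcal{P}_{uh}[h])$.

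The one genuine methodological difference lies in how you treat the second summand. You interpose $h^N$ and argue by a direct cell-by-cell Lipschitz estimate that $\|\mathcal{P}_{uh}[\hat h^N]-\mathcal{P}_{uh}[h^N]\|_{L^\infty(I_T)}=O(\sigma_h^N)$, then use the raw $L^1(I_T)$ convergence $h^N\to h$ and the uniform kernel bound to close. The paper instead routes this step through the optimal-transport formalism of Section~\ref{sec:opt_tra}: it computes the pseudo-inverse of $\hat h^N$ to show $d_{W^1}(h^N(t,\cdot),\hat h^N(t,\cdot))\le\sigma_h^N$, hence $d_{W^1}(h(t,\cdot),\hat h^N(t,\cdot))\to 0$, and then bounds $\mathcal{P}_{uh}[\hat h^N]-\mathcal{P}_{uh}[h]$ via an optimal coupling $\gamma_h^N$ and the Lipschitz estimate on $(w,v)\mapsto P_{uh}(w,v)(v-w)$. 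Your argument is more elementary and self-contained (it never needs the Kantorovich duality or pseudo-inverse calculus), while the paper's version makes explicit the role of the Wasserstein tools announced in Section~\ref{sec:opt_tra}; both yield the same quantitative $O(1/N)$ rate for the $\hat h^N$ versus $h^N$ comparison.
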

\begin{proof}
The first two of \eqref{convergenza una specie} are follow as direct consequence of
the strong $L^1$-compactness obtained in Corollary \ref{cmpctness}, together with the Lipschitz regularity of the nonlinear diffusion $\phi_u$ and the uniform bound of $\partial_w D^2$ on $I$. Concerning the third part, we need to first show that the empirical measures $\hat{h}^{N}$ and the piecewise constant densities $h^{N}$ share the same limit $h$ with respect to a suitable topology. 
Indeed, it is possible to prove that 
\begin{equation}\label{empiriche constants}
   d_{W^1} \left(h(t,\cdot), \hat{h}^{N}(t,\cdot) \right) \longrightarrow 0 \quad \mbox{ as  $N \to \infty$ for all $t \in [0,T]$}.  
\end{equation}
This is follows by the identity  $d_{W^1}(\mu,\nu) = \| X_\mu - X_\nu\|_{L^1}$ between probability measures. Observing that the pseudo-inverse mapping of an empirical measure is piecewise constant, it is easy to see that 
\begin{align*}
   \| X_{h^{N}(t,\cdot)} - X_{\hat{h^{N}}(t,\cdot)}\|_{L^1([-1,1])}  &\leq \sum_{i=0}^{N -1} \int_{\sigma_h^N i}^{\sigma_h^N(i+1)} \left|\left( w - \sigma_h^N \right)\frac{1}{h_i(t)} \right|dw \\
   &= \frac{\sigma_h^N}{2} \sum_{i=0}^{N -1} (H_{i+1}(t) - H_i(t)) \leq \sigma_h^N.
\end{align*}
Recalling that the sequence $h^N$ also converges to $h$ with respect to the $1$-Wasserstein distance, we deduce
\[
d_{W_1}(h(t,\cdot),\hat{h}^{N}(t,\cdot)) \leq d_{W_1}(h(t,\cdot),h^{N}(t,\cdot)) + d_{W_1}(h^{N}(t,\cdot),\hat{h}^{N}(t,\cdot)) \leq C \sigma_h^N,
\]
for some positive geometric constant $C$, and then \eqref{empiriche constants} follows. We are now in position to prove the third limit in \eqref{convergenza una specie}, which is equivalent to show the following 
\[
    \int_0^T \int_I (u^{N} - u) \sum_{h \in \mathfrak{H}} \mathcal{P}_{uh}[\hat{h}^{N}] \partial_w \zeta \to 0, \]
    and 
    \[  \int_0^T \int_I u \sum_{h \in \mathfrak{H}}  (\mathcal{P}_{uh}[\hat{h}^{N}] - \mathcal{P}_{uh}[h]) \partial_w \zeta \to 0.
\]
The first limit is immediate because of the boundedness assumption on any of the $ P_{uh}$. On the other hand, for the second limit, we can always consider an optimal plan $\gamma_h^{N}(t)$ between the probability measures $\hat{h}^{N}(t,\cdot)$ and $h^{N}(\cdot)$ with respect to the cost $|\cdot|$. Then for every $t \geq 0$ fixed, calling $C_1 = \| \partial_w \zeta\|_{L^\infty} M_u e^{c_u T}$ we get 
\begin{align*}
    &\int_I \left| u \sum_{h \in \mathfrak{H}}  (\mathcal{P}_{uh}[\hat{h}^{N}] - \mathcal{P}_{uh}[h]) \partial_w \zeta\right| dw \\
    \leq & C_1 \sum_{h \in \mathfrak{H}}  \int_I \left|\int_I P_{uh}(w,v)(v-w)d h^{N}(t,v)  - \int_I P_{uh}(w,v')(v'-w)d \hat{h}^{N}(t,v')\right| dw \\
    = & C_1\sum_{h \in \mathfrak{H}} \int_I \left| \int_{I \times I} \big(P_{uh}(w,v)(v-w) - P_{uh}(w,v')(v'-w)\big) d\gamma_h^{N}(t)(v,v') \right|dw \\
    \leq & C_1 \sum_{h \in \mathfrak{H}} (Lip[P_{uh}] + \|P_{uh}\|_{L^\infty}) \int_I   \int_{I \times I} |v- v'| d\gamma_h^{N}(t)(v,v') dw \\
    \leq & C_1 \Theta_{u}d_{W_1}(h^N(t,\cdot),\hat{h}^{N}(t,\cdot))\,\leq\, C_1 \Theta_{u}\sigma_h^N.
\end{align*}
Then, 
$$
\int_0^T \int_I \left| u \sum_{h \in \mathfrak{H}}  (\mathcal{P}_{uh}[\hat{h}^{N}] - \mathcal{P}_{uh}[h]) \partial_w \zeta \right| dw dt \leq C_1 T \Theta_u \frac{1}{N} \sum_{h \in \mathfrak{H}}\sigma_h
$$
and this concludes the proof.
\end{proof}

The remaining part of this section is devoted to prove that the sequence $u^N$ satisfies the the weak formulation of \eqref{main_comp} in the limit as $N\to\infty$.

\begin{prop}\label{Propconsistencysystem}
Under the same assumptions of Proposition \ref{prop convergenza una specie}, for every $\zeta \in C^{\infty}_c ((0,T)\times(-1,1))$, one has 
\begin{equation}\label{consistency}
    \lim_{N \to \infty} \int_0^T \int_I u^{N} \partial_t \zeta + \phi_u(u^{N}) \partial_w\big(D_u^2(w) \partial_w\zeta\big)  - u^{N} \sum_{h \in \mathfrak{H}} \mathcal{P}_{uh}\left[\hat{h}^{N} \right] \partial_w\zeta dwdt = 0.
\end{equation}
\end{prop}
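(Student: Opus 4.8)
The plan is to test the particle dynamics against $\zeta$ and use the compact support of $\zeta$ in time to convert the time derivative into the particle velocities. Since each opinion carries the infinitesimal mass $\sigma_u^N$, I first note that the Riemann-type sum $\sigma_u^N\sum_{i=0}^{N-1}g(W_i(t))$ approximates $\int_I u^N(t,\cdot)g\,dw$ for any Lipschitz $g$, with error $O(\max_i(W_{i+1}-W_i))=O(\sigma_u^N)$ by the upper spacing bound in Lemma \ref{lem:minmax system}. Differentiating $t\mapsto \sigma_u^N\sum_i\zeta(t,W_i(t))$ in time and integrating over $[0,T]$, the endpoint terms drop out by the support assumption on $\zeta$, and I obtain
\[
\int_0^T\int_I u^N\partial_t\zeta\,dw\,dt = -\int_0^T\sigma_u^N\sum_{i=0}^{N-1}\partial_w\zeta(t,W_i)\,\dot W_i\,dt + o(1).
\]
It then remains to split $\dot W_i=\dot W_i^{d}+\dot W_i^{p}$ according to \eqref{eq:z_d}--\eqref{eq:z_n} and to match each contribution with a term of \eqref{consistency}.

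For the nonlocal velocity I would observe that, by definition of the empirical measures, $\dot W_i^{p}=\sum_{h\in\mathfrak{H}}\mathcal{P}_{uh}[\hat h^N](t,W_i)$, so $\sigma_u^N\sum_i\partial_w\zeta(W_i)\dot W_i^{p}$ is exactly a Riemann sum for $\int_I u^N\sum_{h}\mathcal{P}_{uh}[\hat h^N]\partial_w\zeta\,dw$. Replacing $\partial_w\zeta(W_i)\,\mathcal{P}_{uh}[\hat h^N](W_i)$ by its value on the cell costs $O((W_{i+1}-W_i)^2)$, using the boundedness and Lipschitz continuity of $P_{uh}$ from $(P)$ and the smoothness of $\zeta$; summing and invoking Lemma \ref{lem:minmax system} these errors are $O(\sigma_u^N)$.

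The diffusive term is the genuinely delicate point. Substituting \eqref{eq:z_d}, the prefactors $\sigma_u^N$ cancel and
\[
\sigma_u^N\sum_i\partial_w\zeta(W_i)\dot W_i^{d}=\sum_i D_u^2(W_i)\,\partial_w\zeta(W_i)\big(\phi_u(u_{i-1})-\phi_u(u_i)\big).
\]
I would then perform a discrete summation by parts, transferring the finite difference of $\phi_u(u_i)$ onto $g_i:=D_u^2(W_i)\partial_w\zeta(W_i)$, which yields $\sum_i (g_{i+1}-g_i)\phi_u(u_i)$ plus boundary contributions at the indices adjacent to $\pm1$. A first-order expansion $g_{i+1}-g_i=\partial_w(D_u^2\partial_w\zeta)(W_i)(W_{i+1}-W_i)+O((W_{i+1}-W_i)^2)$, legitimate since $D_u\in C^2$ by $(D1)$ and $\zeta$ is smooth, turns the principal part into a Riemann sum converging to $\int_I\phi_u(u^N)\partial_w(D_u^2\partial_w\zeta)\,dw$. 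The quadratic remainders are paired with $\phi_u(u_i)$, which is uniformly bounded because $u^N\in L^\infty(I_T)$ by \eqref{eq: minmaxg system} and $\phi_u$ is Lipschitz with $\phi_u(0)=0$; they therefore sum to $O(\sigma_u^N)$. The boundary terms carry the factors $\partial_w\zeta(W_1)$ and $\partial_w\zeta(W_{N-1})$, which vanish for $N$ large since $W_1\to-1$, $W_{N-1}\to1$ while $\mathrm{supp}\,\zeta\subset\subset(-1,1)$; the degeneracy $D_u(\pm1)=0$ plays no role here precisely because of this support restriction.

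Collecting the three pieces reproduces the three integrals in \eqref{consistency}, so that the consistency error reduces to a finite collection of remainders, each $O(\sigma_u^N)$ uniformly in $t\in[0,T]$, hence vanishing as $N\to\infty$. I expect the main obstacle to be the uniform-in-$N$ control of the diffusive remainder: the second-order nature of that term forces a summation by parts whose leftover involves $\sum_i|\phi_u(u_{i-1})-\phi_u(u_i)|\le Lip[\phi_u]\,TV[u^N]$, and only the uniform total variation bound of Proposition \ref{LemmastimaBV system}, combined with the two-sided spacing estimates of Lemma \ref{lem:minmax system}, makes this leftover collapse to the required $O(\sigma_u^N)$.
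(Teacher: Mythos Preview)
Your approach is correct and follows a route that is conceptually close to, but more streamlined than, the paper's. The paper tests the piecewise-constant density $u^N$ directly: it expands $\int_{W_i}^{W_{i+1}} u_i\,\partial_t\zeta$ via second-order Taylor expansions around each cell endpoint, obtaining a symmetric principal part $-\tfrac{\sigma_u^N}{2}\sum_i[\partial_w\zeta(W_{i+1})\dot W_{i+1}+\partial_w\zeta(W_i)\dot W_i]$ plus a remainder $I_3$; it then shows by careful index manipulation that the diffusive principal part cancels \emph{exactly} against the term $II$, and uses the TV bound of Proposition~\ref{LemmastimaBV system} to control one of the pieces of $I_3$ (the one involving second differences of $\phi_u(u_i)$). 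Your shortcut of differentiating $\sigma_u^N\sum_i\zeta(t,W_i(t))$---in effect testing the empirical measure rather than $u^N$ itself---produces the velocities directly, and a single summation by parts followed by a first-order Taylor expansion of $g_i=D_u^2(W_i)\partial_w\zeta(W_i)$ matches the diffusive piece with $II$ up to $O(\sigma_u^N)$. This bypasses the second-order remainder $I_3$ altogether and, contrary to the concern in your closing paragraph, does not in fact require the TV bound: after summation by parts the quadratic Taylor leftover is paired with the uniformly bounded $\phi_u(u_i)$, not with differences $\phi_u(u_{i-1})-\phi_u(u_i)$, so \eqref{eq: minmaxg system} alone suffices. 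What the paper's more elaborate computation buys is the structural identity $I_1+II=0$; your version trades that for a shorter overall argument with errors of the same order.
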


\begin{proof}
For simplicity we will denote the three contributions on the r.h.s. of \eqref{consistency} as follows
\begin{equation}
\begin{split}
 &I:=  \int_0^T \int_I u^{N} \partial_t\zeta dw dt, \quad  II:= \int_0^T \int_I \phi(u^{N})\partial_w(D^2(w)\partial_w \zeta) dw dt,\\
 & III:= -\int_0^T \int_I u^{N} \sum_{h \in \mathfrak{H}}\mathcal{P}_{uh}[\hat{h}^{N} ]\partial_w\zeta dw dt.
 \end{split}
\end{equation}
Recalling the definition of $u^{N}$ and performing some standard computations, as discrete integration by parts and reconstruction of the derivative, it is easy to rewrite 
\begin{align*}
&I = \sum_{i=0}^{N -1} \int_0^T \int_{W_i(t)}^{W_{i+1}(t)} u_i^{N}(t) \partial_t \zeta (t,w) dw\,dt \\
&\quad = \sum_{i=0}^{N-1} \int_0^T  u_i^N(t) \dot{W}_{i+1}(t)\int_{W_i(t)}^{W_{i+1}(t)}  \left(\intmed_{W_i(t)}^{W_{i+1}(t)}\zeta(t,w) - \zeta(t,W_{i+1}(t))\right)dwdt \\
&\qquad - \sum_{i=0}^{N-1} \int_0^T u_i^N(t) \dot{W}_{i}(t) \int_{W_i(t)}^{W_{i+1}(t)} \left(\intmed_{W_i(t)}^{W_{i+1}(t)}\zeta(t,w) - \zeta(t,W_{i}(t))\right) dw dt,
\end{align*}
thus, expanding $\zeta(t,w)$ at first order with respect to $W_{i+1}(t)$ and $W_i(t)$ respectively, we obtain that for some $\alpha^i_w \in [w,W_{i+1}(t)]$ and $\beta^i_w \in [W_i(t),w]$ for which 
\begin{align*}
  I &= \frac{\sigma_u^N}{2}\sum_{i=0}^{N-1}\int_0^T [-\partial_w \zeta(t,W_{i+1}(t))\dot{W}_{i+1}(t) - \partial_w \zeta(t,W_i(t))\dot{W}_i(t)]dt\\
  &\quad + \frac{1}{2}\sum_{i=0}^{N-1}\int_0^T u^N_i(t) \dot{W}_{i+1}(t) \int_{W_i(t)}^{W_{i+1}(t)} \partial_{ww}\zeta(t,\alpha^i_w)(w-W_{i+1}(t))^2 dw dt\\
  &\quad - \frac{1}{2}\sum_{i=0}^{N-1}\int_0^T u^N_i(t) \dot{W}_{i}(t) \int_{W_i(t)}^{W_{i+1}(t)} \partial_{ww}\zeta(t,\beta^i_w)(w-W_{i}(t))^2 dw dt.
\end{align*}
Recalling that $\dot{W}_i(t) = \dot{W}_i^d(t) + \dot{W}_i^p(t)$, we can split the contribution of the first term of the r.h.s. in a diffusive part and a non local part. Then we can rewrite $I$ as the sum of the following three terms
\begin{align*}
    &I_1:= -\frac{\sigma_u^N}{2}\sum_{i=0}^{N-1}\int_0^T [\partial_w \zeta(t, W_{i+1}(t))\dot{W}^d_{i+1}(t) + \partial_w \zeta(t,W_i(t))\dot{W}^d_i(t)] dt \\
    &I_2:= -\frac{\sigma_u^N}{2}\sum_{i=0}^{N-1}\int_0^T [\partial_w \zeta(t,W_{i+1})\dot{W}^p_{i+1}(t) + \partial_w \zeta(t,W_i(t))\dot{W}^p_i(t)]dt \\
    &I_3:= \frac{1}{2}\sum_{i=0}^{N-1}\int_0^T u^N_i(t) \dot{W}_{i+1}(t) \int_{W_i(t)}^{W_{i+1}(t)} \partial_{ww}\zeta(t,\alpha^i_w)(w-W_{i+1}(t))^2\\
    &\qquad - \frac{1}{2}\sum_{i=0}^{N-1}\int_0^T u^N_i(t) \dot{W}_{i}(t) \int_{W_i(t)}^{W_{i+1}(t)} \partial_{ww}\zeta(t,\beta^i_w)(w-W_{i}(t))^2.
\end{align*}
In the sequel we do not explicit the dependence on $N$ or on the variable $t$ whenever this is clear from the context. For sake of clarity, we divide the rest of the proof in three steps.

\textbf{Step 1.} We first prove that $I_1+II = 0$. Standard algebraic computations and discrete integration by parts lead to 
    \begin{align*}
        I_1 + II &= \sum_{i=2}^{N-2} \int_0^T \partial_w \zeta(t,W_i) D_u^2(W_i) [\phi_u(u_i) - \phi_u(u_{i-1})]dt \\
        &\quad+ \sum_{i=1}^{N-1} \partial_w \zeta(t,W_i) D_u^2(W_i)(\phi_u(u_{i-1}) - \phi_u(u_i)) \\
        &\quad + \frac{1}{2}\int_0^T [\partial_w\zeta(t,W_N)D_u^2(W_N)\phi_u(u_{N-1}) - \partial_w\zeta(t,W_0) D_u^2(W_0)\phi_u(u_0)] dt \\
        &\quad -\int_0^T \partial_w\zeta(t,W_{N-1})D_u^2(W_{N-1})(\phi_u(u_{N-2})-\phi_u(u_{N-1}))dt \\
        &\quad-\int_0^T \partial_w\zeta(t,W_1)D_u^2(W_1)(\phi_u(u_0) - \phi_u(u_1))dt \\
        &= \frac{1}{2}\int_0^T \partial_w\zeta(t,Z_N)D^2(Z_N)\phi_u(W_{N-1}) - \partial_w\zeta(t,W_0) D_u^2(W_0)\phi_u(u_0)dt = 0,
    \end{align*}
    where the last equality holds because $\partial_w \zeta(t,W_0) = \partial_w \zeta(t,W_N) = \partial_w \zeta(t, \pm 1) = 0$ for all $t \in [0,T]$.

\textbf{Step 2.} In this step we show that $I_2+III$ vanishes as $N\to\infty$. A discrete integration by parts, together with the fact that $\partial_w \zeta(t,W_0) = \partial_w \zeta(t,W_N) = 0$, imply
\begin{align*}
       I_2 + III &= \sum_{i=1}^{N-1} \sum_{h \in \mathfrak{H}} \sum_{j=0}^{N} \sigma_u^N \sigma_h^N \int_0^T \partial_w \zeta(t,W_i) P_{uh}(W_i,H_j)(W_i-H_j)dt \\
       &\quad - \sum_{i=0}^{N-1} \sum_{h \in \mathfrak{H}}\sum_{j=0}^{N} \sigma_h^N \int_0^T u_i \int_{W_i}^{W_{i+1}} \partial_w \zeta(t,w) P_{uh}(w,H_j)(w-H_j)dt.
    \end{align*}     
    Then, expanding $\partial_w \zeta$ at first order with respect to $W_i$ and still using that $\zeta$ has compact support in $(-1,1)$, there is $\gamma^i_w \in [W_i,w]$ such that
\begin{align*}
      & I_2 + III = \\
    = & \sum_{i=1}^{N-1} \sum_{h \in \mathfrak{H}} \sum_{j=0}^{N} \sigma_h^N \int_0^T u_i \partial_w \zeta(t,W_i) \int_{W_i}^{W_{i+1}} P_{uh}(W_i,H_j)(W_i - H_j)  dw dt\\
     & - \sum_{i=1}^{N-1} \sum_{h \in \mathfrak{H}} \sum_{j=0}^{N} \sigma_h^N \int_0^T u_i \partial_w \zeta(t,W_i) \int_{W_i}^{W_{i+1}} P_{uh}(w,H_j)(w-H_j) dw dt \\
      & -  \sum_{i=1}^{N-1} \sum_{h \in \mathfrak{H}} \sum_{j=0}^{N} \sigma_h^N\int_0^T u_i \int_{W_i}^{W_{i+1}} P_{uh}(w,H_j)(w-H_j)(w-W_i)\partial_{ww}\zeta(t,\gamma^i_w) dwdt.
\end{align*}
    Since $P_{uh}$ is Lipschitz in the first component, we obtain 
    \begin{align*}
       |I_2 + III| &\leq  C \sum_{h \in \mathfrak{H}} Lip[P_{uh}]\sum_{i=1}^{N-1}\int_0^T \left[u_i (W_{i+1}-W_i)^2 + u_i (W_{i+1}-W_i)^2 \right] dt\\
       &\leq 2CT\, \sigma_u^N \sum_{h \in \mathfrak{H}}Lip[P_{uh}] 
    \end{align*}
where $C = \big(3 \| \zeta_w \|_{\infty} + \| \zeta_{ww} \|_{\infty}\big)$,  thus the term $I_2 + III \to 0$ as $N \to \infty$.

\textbf{Step 3.}  In this last step we prove that $I_3$ also vanishes as $N\to\infty$.  For simplicity we introduce the rest functions 
    \begin{align*}
    &\mathcal{R}_{i,\alpha}(t)= \int_{W_i}^{W_{i+1}} \partial_{ww}\zeta(t,\alpha^i_w)(w-W_{i+1})^2 dw, \\ &\mathcal{R}_{i,\beta}(t)= \int_{W_i}^{W_{i+1}} \partial_{ww}\zeta(t,\beta^i_w)(w-W_{i+1})^2 dw,
    \end{align*}
    
    in particular, thanks to upper bound of \eqref{eq: minmax system}, we observe
   \begin{equation}\label{stime sul resto}
       |\mathcal{R}_{i,\alpha}|,|\mathcal{R}_{i,\beta}| < \| \partial_{ww} \zeta\|_{L^\infty}(W_{i+1} - W_i)^3 < \| \partial_{ww} \zeta\|_{L^\infty} \left(\frac{e^{c_u T}}{m_u}\right)^3 (\sigma^N_u)^3.
    \end{equation}
   Using $\mathcal{R}_{i,\alpha/\beta}(t)$ we can rewrite $I_3$ as 
    \begin{align*}
        I_3 &= \frac{1}{2}\sum_{i=0}^{N-1}\int_0^T u_i [(\dot{W}_{i+1}^d - \dot{W}_i^d) + (\dot{W}_{i+1}^p - \dot{W}_i^p)]\mathcal{R}_{i,\alpha} +  u_i \dot{W}_{i}
        [\mathcal{R}_{i,\alpha} - \mathcal{R}_{i,\beta}]   dt \\
        &\quad +\frac{1}{2}\sum_{i=0}^{N-1}\int_0^T u_i \dot{W}_{i} \int_{W_i}^{W_{i+1}}
        \partial_{ww}\zeta(t,\beta^i_w)[(w-W_{i+1})^2 - (w-W_{i})^2] dw dt
    \end{align*}
    and by replacing the expression of $\dot{W}_i^d$ and $\dot{W}_i^p$, we obtain
    \begin{align*}
        I_3 &= \frac{1}{2\sigma_u^N}\sum_{i=1}^{N-2} \int_0^T u_i (D_u^2(W_{i+1}) -D_u^2(W_i))(\phi_u(u_i) - \phi_u(u_{i+1})) \mathcal{R}_{i,\alpha}dt \\
        &\,\, + \frac{1}{2\sigma_u^N}\sum_{i=1}^{N-2} \int_0^T u_i D_u^2(W_i)[(\phi_u(u_i) - \phi_u(u_{i+1}))-(\phi_u(u_{i-1})-\phi_u(u_i))]\mathcal{R}_{i,\alpha}dt 
        \\
        & + \frac{1}{2} \sum_{i=1}^{N-2} \sum_{h \in \mathfrak{H}} \sum_{j=0}^{N} \sigma_h^N \int_0^T u_i [P_{uh}(W_{i+1},H_j)-P_{uh}(W_i,H_j))(H_j- W_{i+1}]\mathcal{R}_{i,\alpha}dt 
        \\
        &+ \frac{1}{2} \sum_{i=1}^{N-2} \sum_{h \in \mathfrak{H}} \sum_{j=0}^{N} \sigma_h^N \int_0^T u_i P_{uh}(W_i,H_j)(W_i - H_j)\mathcal{R}_{i,\alpha}(t)dt 
        \\
        &  + \frac{1}{2}\int_0^T \left[ u_0 \dot{W}_1 \mathcal{R}_{0,\alpha}  -  u_{N-1}\dot{W}_{N-1}\mathcal{R}_{N-1,\alpha}
         + \sum_{i=0}^{N-1} u_i \dot{W}_{i}(\mathcal{R}_{i,\alpha} - \mathcal{R}_{i,\beta}) \right] dt\\
        & +\frac{1}{2}\sum_{i=0}^{N-1}\int_0^T u_i \dot{W}_{i} \int_{W_i}^{W_{i+1}}
        \partial_{ww}\zeta(t,\beta^i_w)[(w-W_{i+1})^2 - (w-W_{i})^2] dw dt.
    \end{align*}
Thanks to estimates \eqref{stime sul resto} and \eqref{stimaBV system}, and the fact that 
$$
|\dot{W}_i| < 2\frac{Lip[D_u]}{\sigma^N_u} + \sum_{h \in \mathfrak{H}}\|P_{uh}\|_{L^\infty} \qquad \mbox{ for all } i\ in \{0, \ldots, N\},
$$
we deduce that all the terms of $I_3$ except the last one can be estimated from above by $C\sigma_u^N$, where $C$ is some positive constant depending on $\zeta,\,T$ and on the proper bounds on $D_u,\,\phi_u,\, P_{uh}$ provided by the assumptions (D1), (D2), (Dif) and (P).
Concerning the last term of $I_3$, standard computations, the above estimate on $|\dot{W}_i|$ and \eqref{eq: minmax system} and \eqref{eq: minmaxg system} bring to 
    \begin{align*}
        &\frac{1}{2}\sum_{i=0}^{N-1} \int_0^T u_i \dot{W}_i \int_{W_i}^{W_{i+1}} \partial_{ww}\zeta(t,\beta_w^i)[(w-W_{i+1})^2 - (w- W_i)^2]dwdt \\
        \leq &  2\sum_{i=0}^{N-1} \int_0^T u_i \dot{W}_i \int_{W_i}^{W_{i+1}}\partial_{ww}\zeta \big[(w - W_{i+1})(W_{i+1}-W_i) +(W_{i+1} - W_i)^2 \big]dwdt  \\
        \leq & 2\|\partial_{ww}\zeta\|_{\infty} \sum_{i=0}^{N -1} \int_0^T u_i |\dot{W}_i| (W_{i+1} - W_i)^3dt \\
        \leq & \|\partial_{ww}\zeta\|_{\infty}   \frac{2 M_u e^{4c_u T}}{m_u^3}(\sigma_u^N)^3 \sum_{i=0}^{N -1} \int_0^T |\dot{W}_i| dt \\
        \leq & \|\partial_{ww}\zeta\|_{\infty}   \frac{2 T \sigma_u M_u e^{4 c_u T}}{m_u^3}  \left(2Lip[D_u] + \sum_{h \in \mathfrak{H}} \| P_{uh}\|_{L^\infty}\right) \sigma_u^N. 
    \end{align*}
    
Finally, there exists some constant $C$ depending on the data of the problem and on the function $\zeta$ so that 
    \[ |I_3| \leq C \sigma_u^N, \]
and this concludes the proof of the Step 3 and of Proposition \ref{Propconsistencysystem}. 
\end{proof}

\section{Large-time behaviour}\label{sec:num}
This last section is devoted to the study of the large-time behaviour for the model \eqref{main}. The main difficulty of such study concerns the evolution of the mean opinion which is, unfortunately, not close in general. The mean opinion corresponds to the first moment function 
\[
 m_1(t)=\int_I v u(t,v)dv.
\]
Consider, for example, the easiest case of one species, with $P(w,v)=1$ in \eqref{main}. Then the compromise part corresponds to
\begin{equation}\label{eq:puno}
\oP = \int_I (v-w)u(t,v)dv = m_1(t)-\sigma w,
\end{equation}
and $m_1(t)$ evolves according to
\begin{align*}
     \frac{d}{dt}m_1(t)&= \int_I w \partial_w\left(\frac{\lambda^2}{2}D^2(w)\partial_w \phi(u(t,w)) - \left(m_1(t)-\sigma w \right)u(t,w) \right) dw\\
     &=- \int_I \left(\frac{\lambda^2}{2}D^2(w)\partial_w \phi(u(t,w)) - \left(m_1(t)-\sigma w \right)u(t,w) \right) dw\\
   &= - \int_I \frac{\lambda^2}{2}D^2(w)\partial_w \phi(u(t,w))dw.
\end{align*}
It is then evident that the evolution of $m_1$ is independent on higher order moments of $u$ and it strongly depend on the choice of the mobility $D$. As a consequence, the exact evaluation of the limit 
\begin{equation}\label{eq:limit_m}
    m_1^\infty = \lim_{t\to\infty} m_1(t),
\end{equation}
is quite difficult to investigate, we refer to \cite{AlNaTo} for a more detailed discussion on this topic.
\subsection{Stationary states for the single species model}
We start investigating the stationary states for equation \eqref{main} under the choice $P=1$, as in \eqref{eq:puno}. 
 We assume that the limit value $m_1^\infty$ exist, than the equation for the stationary states writes as
\begin{equation}\label{eq:stationary}
   \partial_w\left(\frac{\lambda^2}{2}D^2(w)\partial_w \phi(u(w)) -\left(m_1^\infty-\sigma w \right)u(w) \right) = 0.
\end{equation}
Moreover, to simplify the following study, we introduce the quantity
\begin{equation}\label{eq:primD}
    \mathfrak{D}_\alpha(w,m_1^\infty)=\int \frac{\left(m_1^\infty-\sigma w \right)}{D^2(w)} d
\end{equation}
and we distinguish between the two sub-cases of linear and nonlinear diffusion.
\subsubsection{Stationary states in the case of linear diffusion}
\begin{figure}[htbp]
\begin{center}
\begin{multicols}{2}
    \includegraphics[width=6cm,height=5cm]{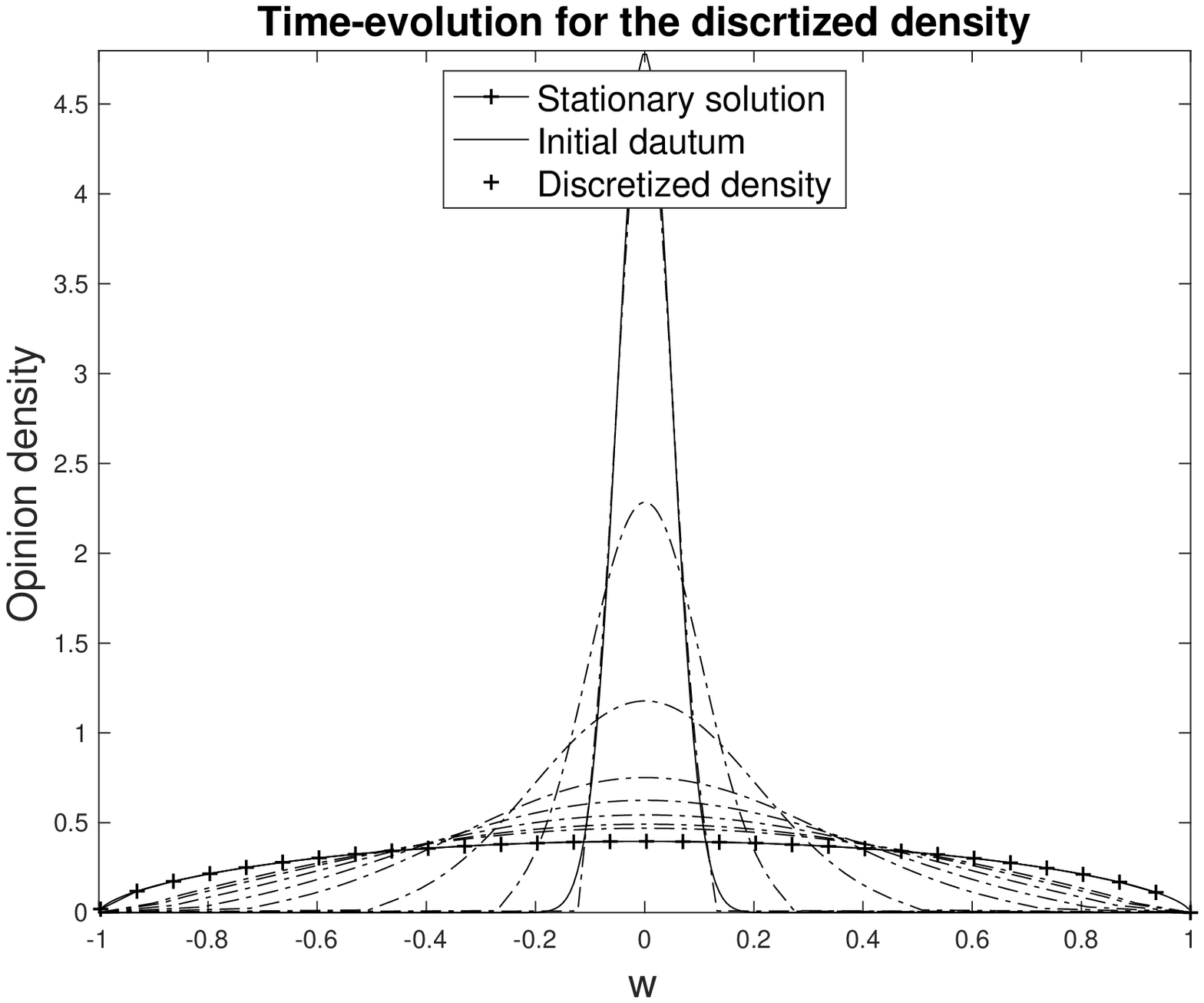}\par 
    \includegraphics[width=6cm,height=5cm]{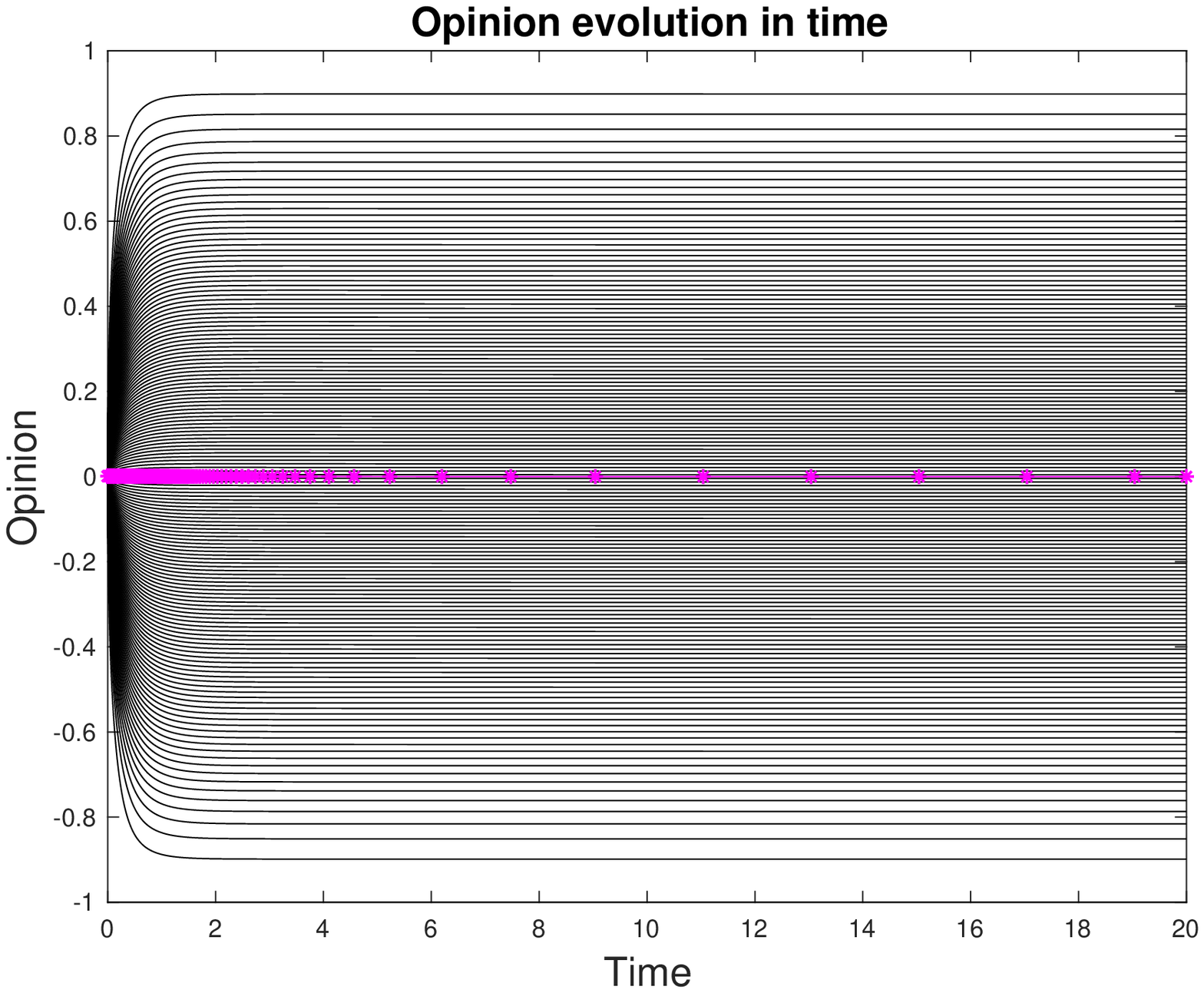}\par 
\end{multicols}
\begin{multicols}{2}
    \includegraphics[width=6cm,height=5cm]{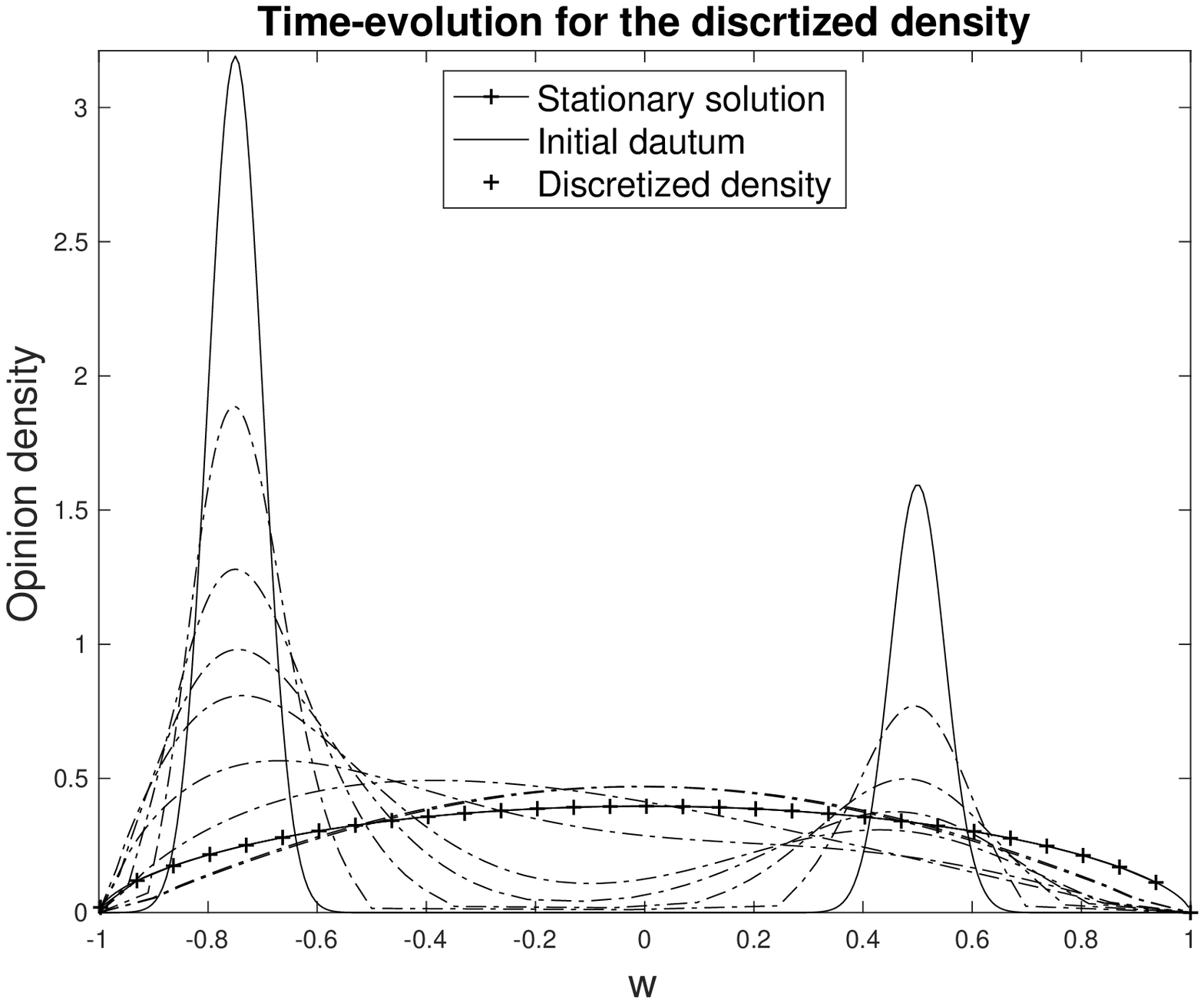}\par
    \includegraphics[width=6cm,height=5cm]{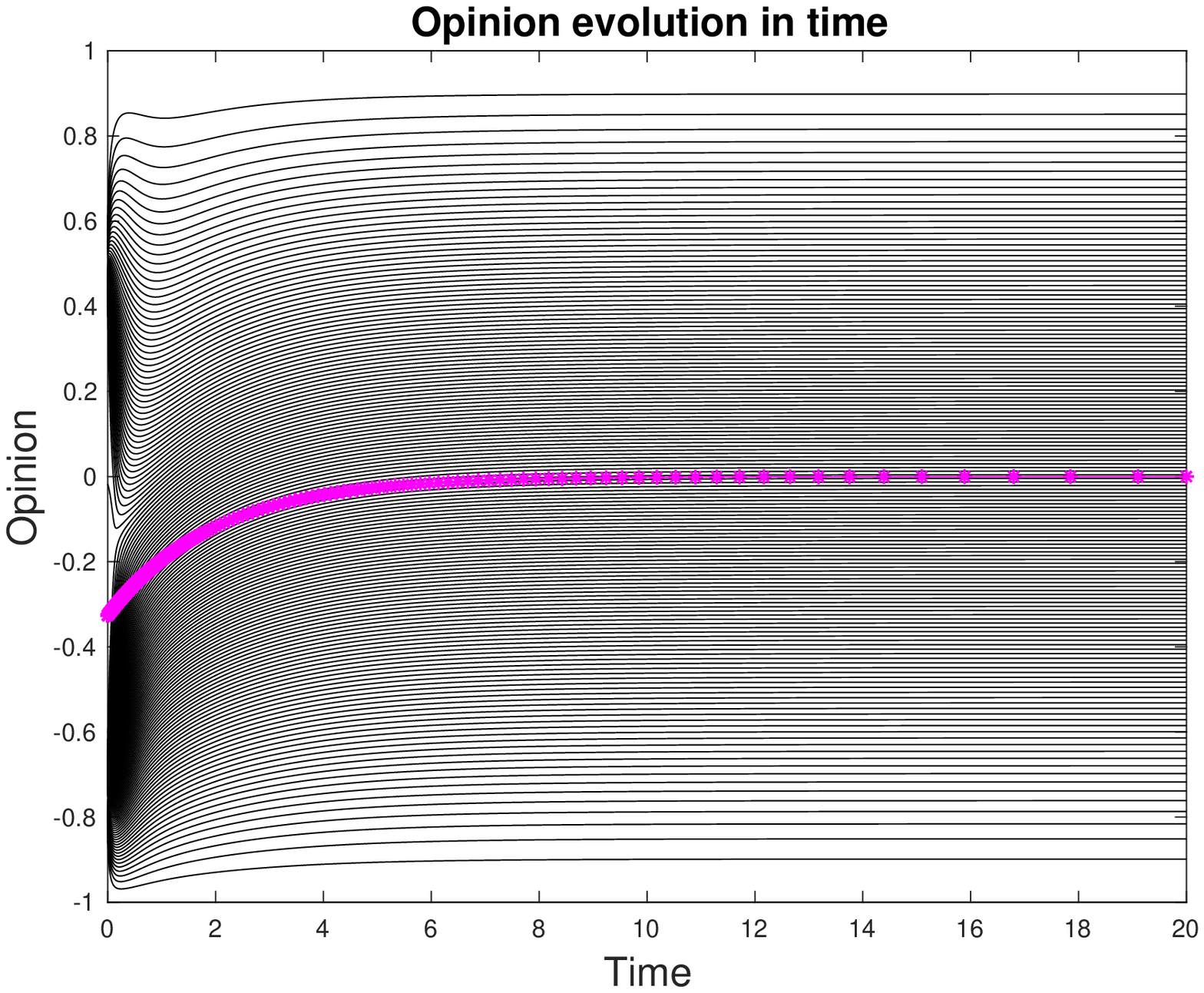}\par
\end{multicols}
\begin{multicols}{2}
    \includegraphics[width=6cm,height=5cm]{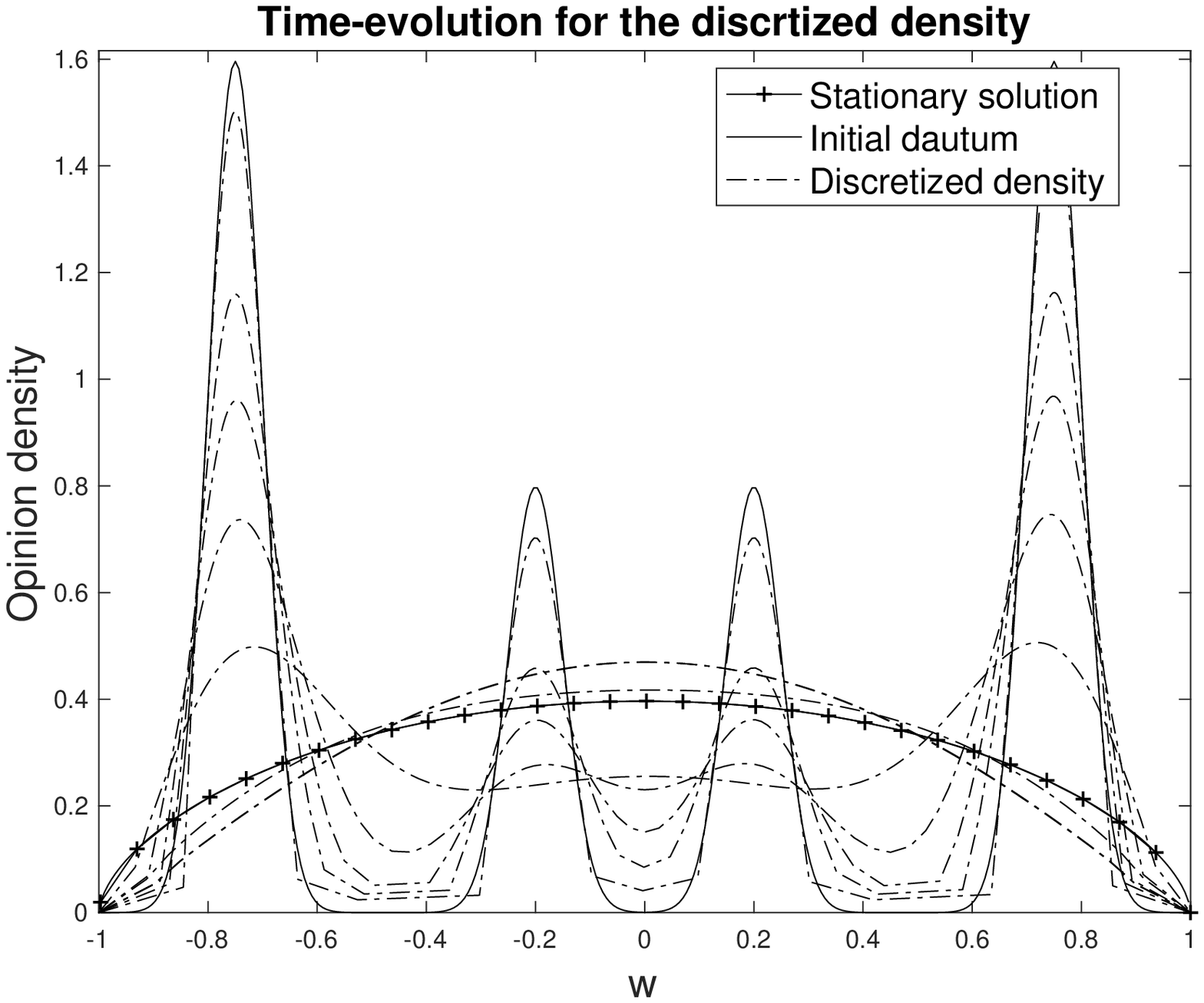}\par
    \includegraphics[width=6cm,height=5cm]{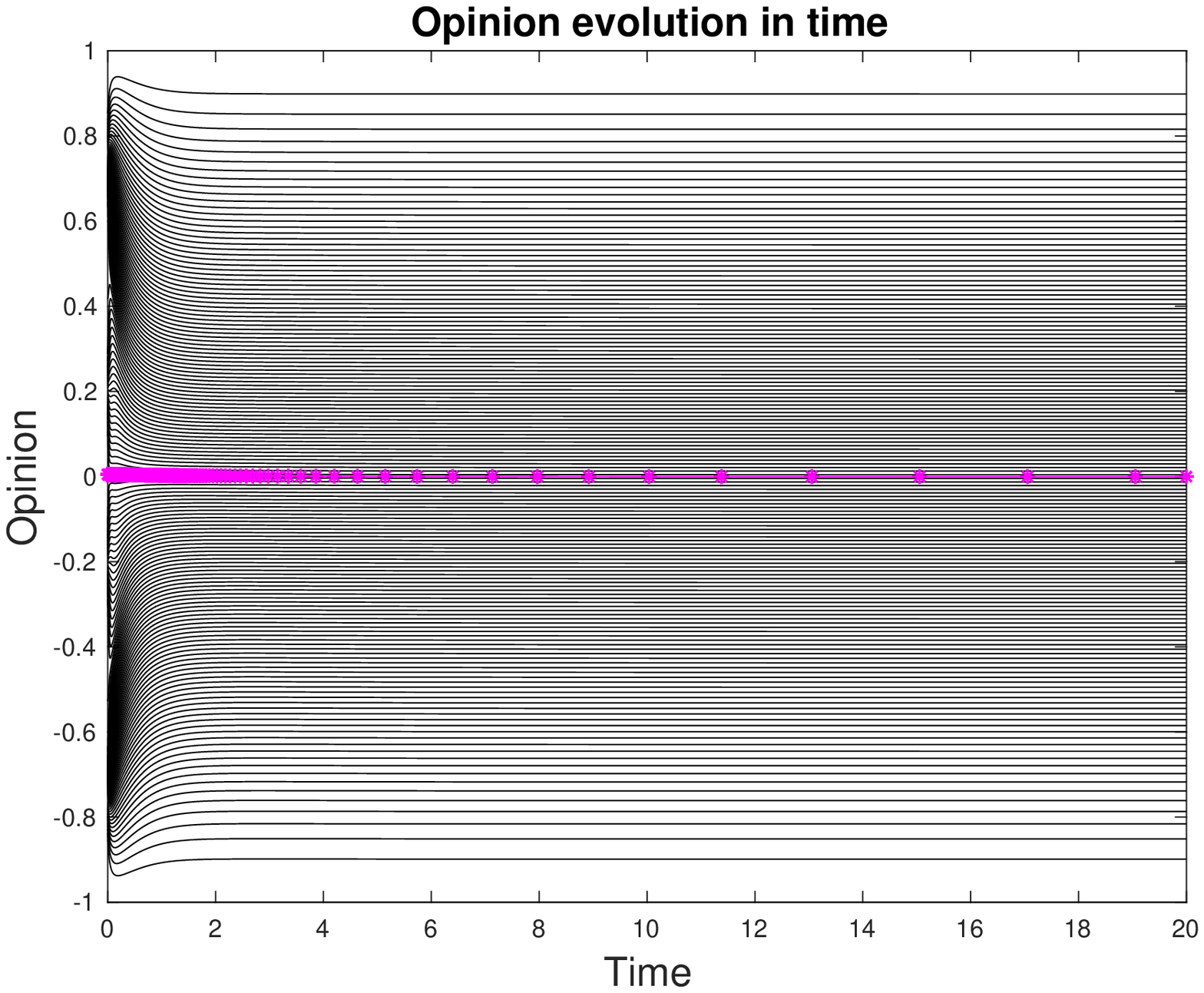}\par
\end{multicols}
\caption{Convergence for different initial data to the stationary state \eqref{eq:stat_a1}. The left column shows the evolution in  time for the reconstructed density, where the initial data are \eqref{eq:ini1}, \eqref{eq:ini2} and \eqref{eq:ini3} respectively, while the right column shows the opinions evolution in time. The (magenta) stars-line represent the mean opinion $m_1(t)$ in each case. Note that, in the second simulation, $m_1$ is not conserved in time but still converges to zero. }
\label{fig:lindiffa1}
\end{center}
\end{figure}
\begin{figure}[htbp]
\begin{center}
\begin{multicols}{2}
    \includegraphics[width=6cm,height=5cm]{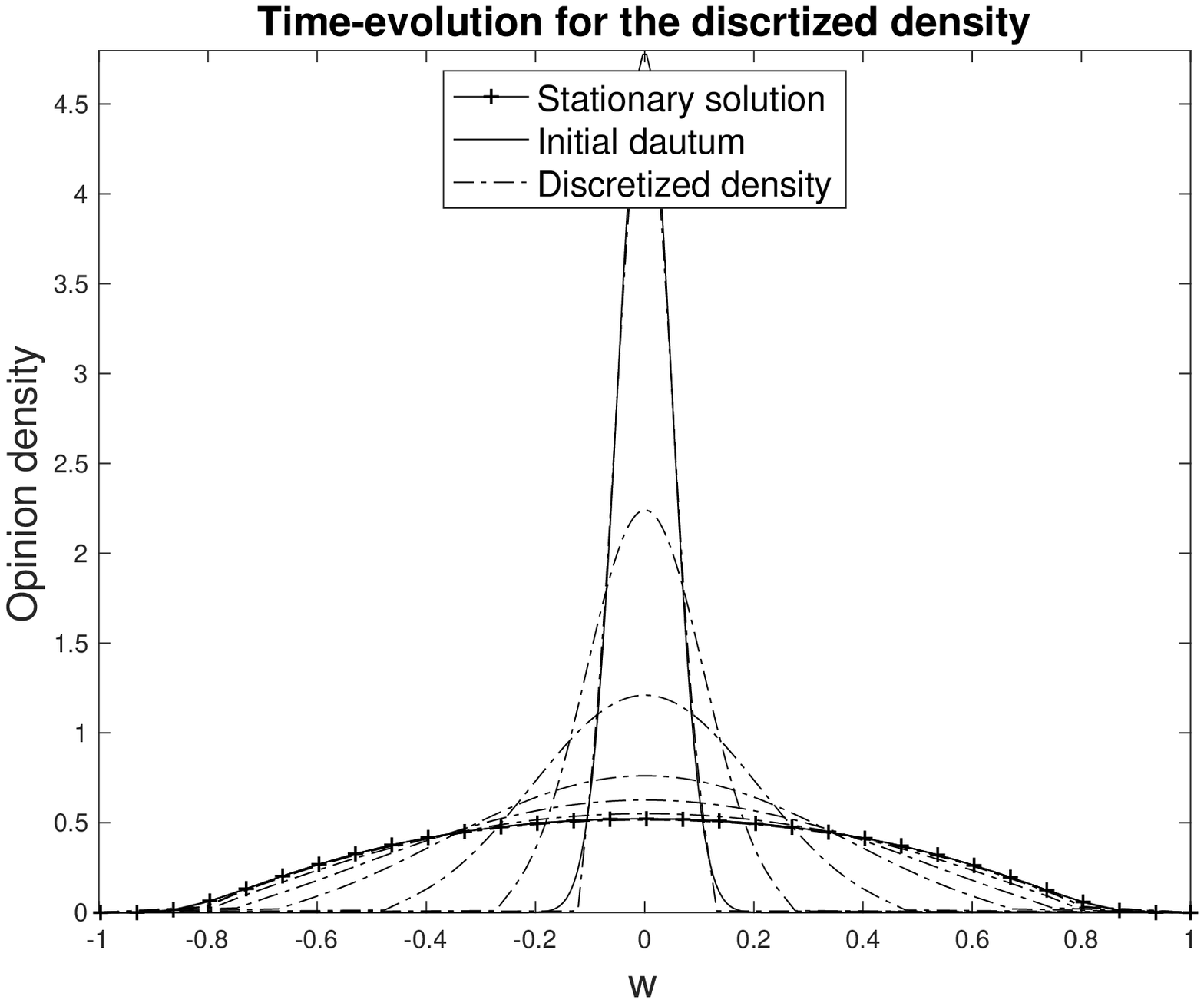}\par 
    \includegraphics[width=6cm,height=5cm]{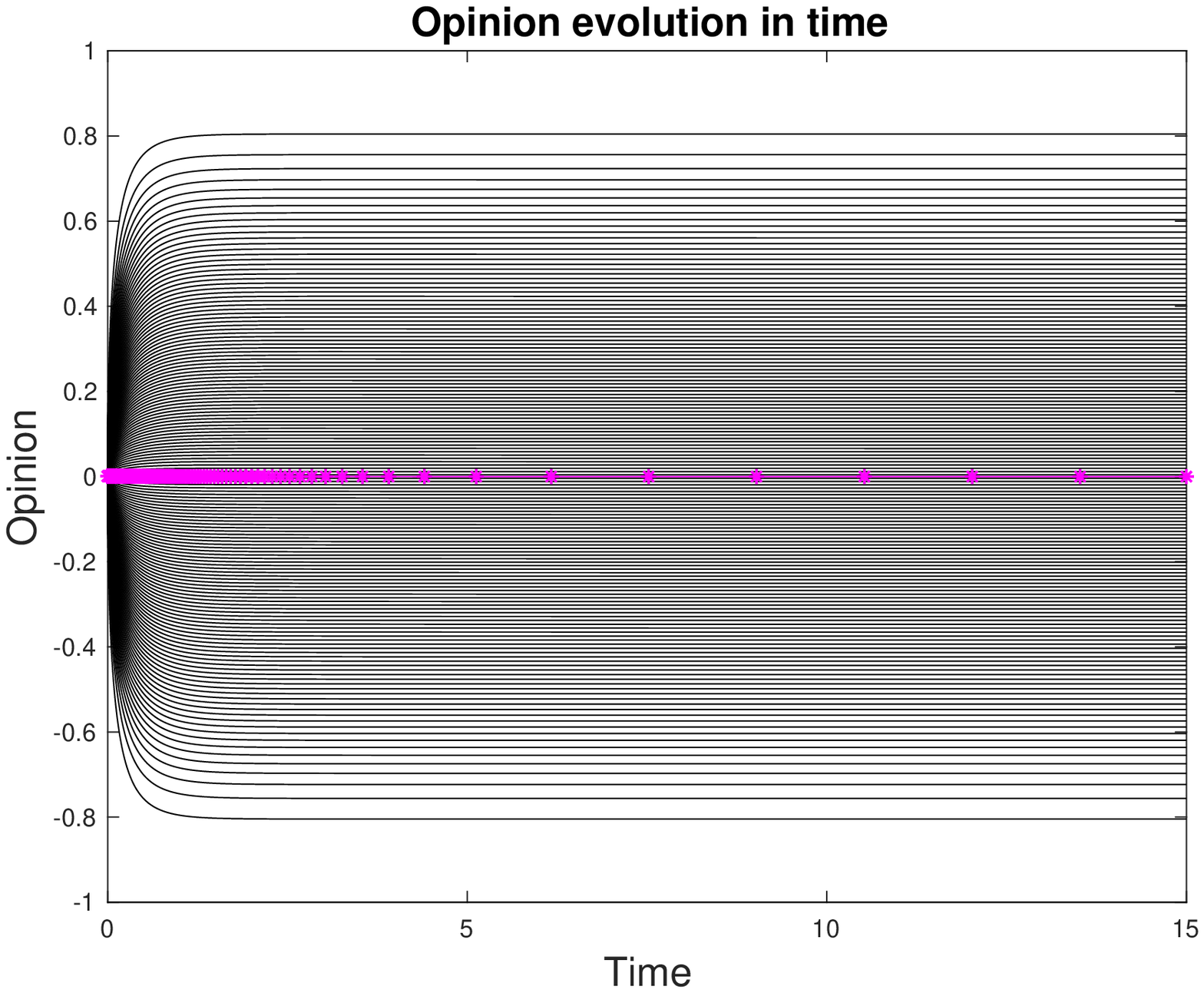}\par 
\end{multicols}
\begin{multicols}{2}
    \includegraphics[width=6cm,height=5cm]{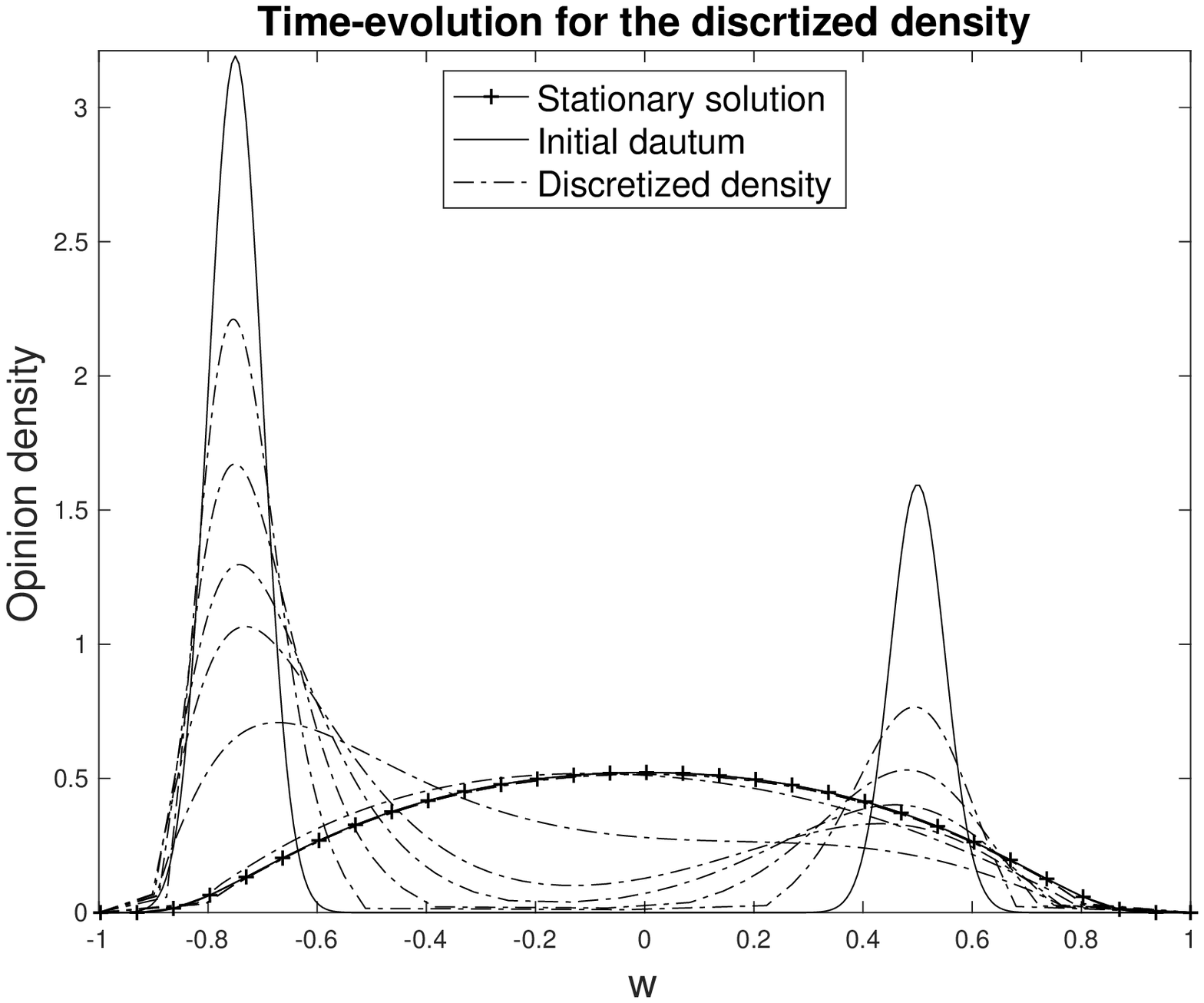}\par
    \includegraphics[width=6cm,height=5cm]{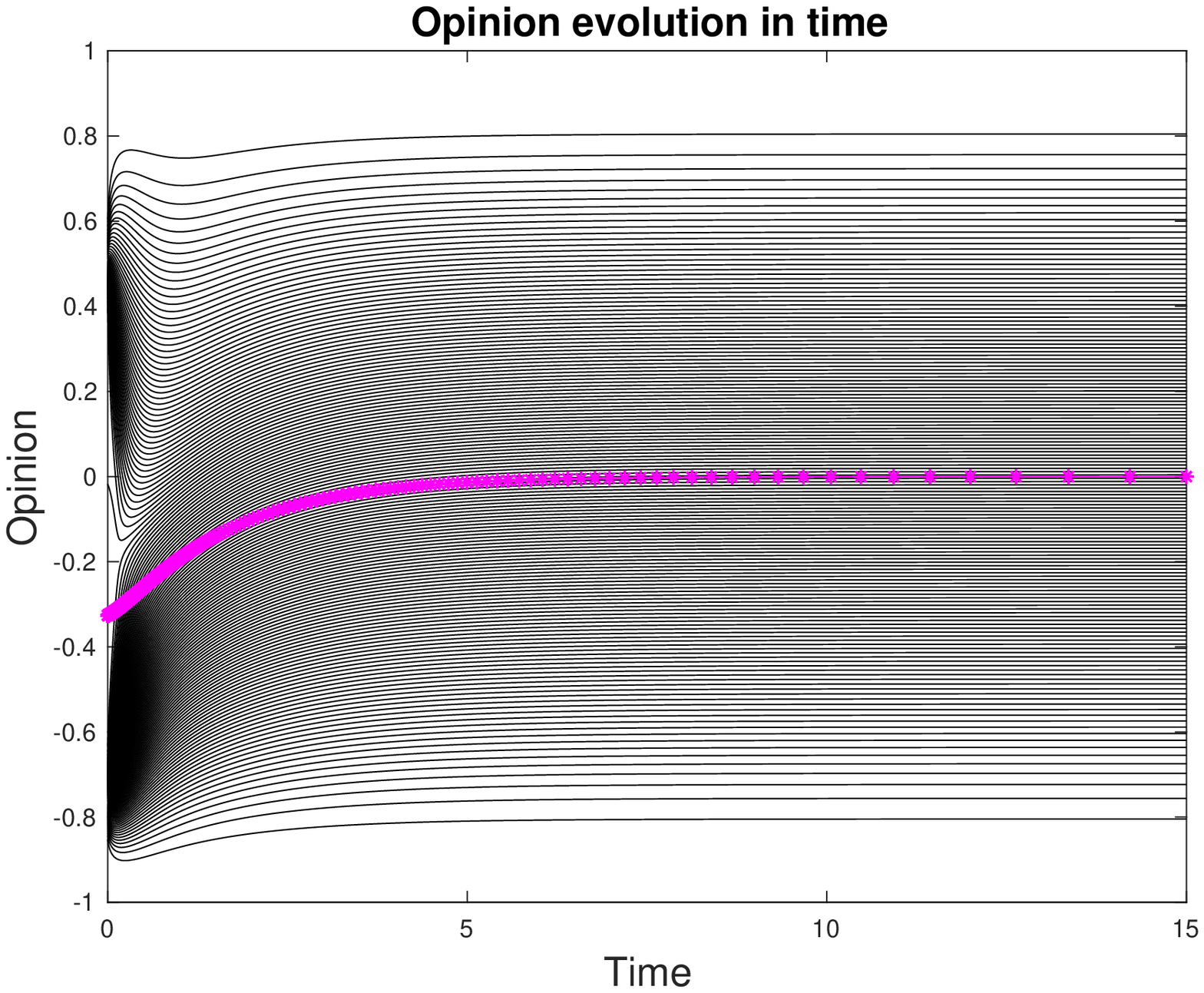}\par
\end{multicols}
\begin{multicols}{2}
    \includegraphics[width=6cm,height=5cm]{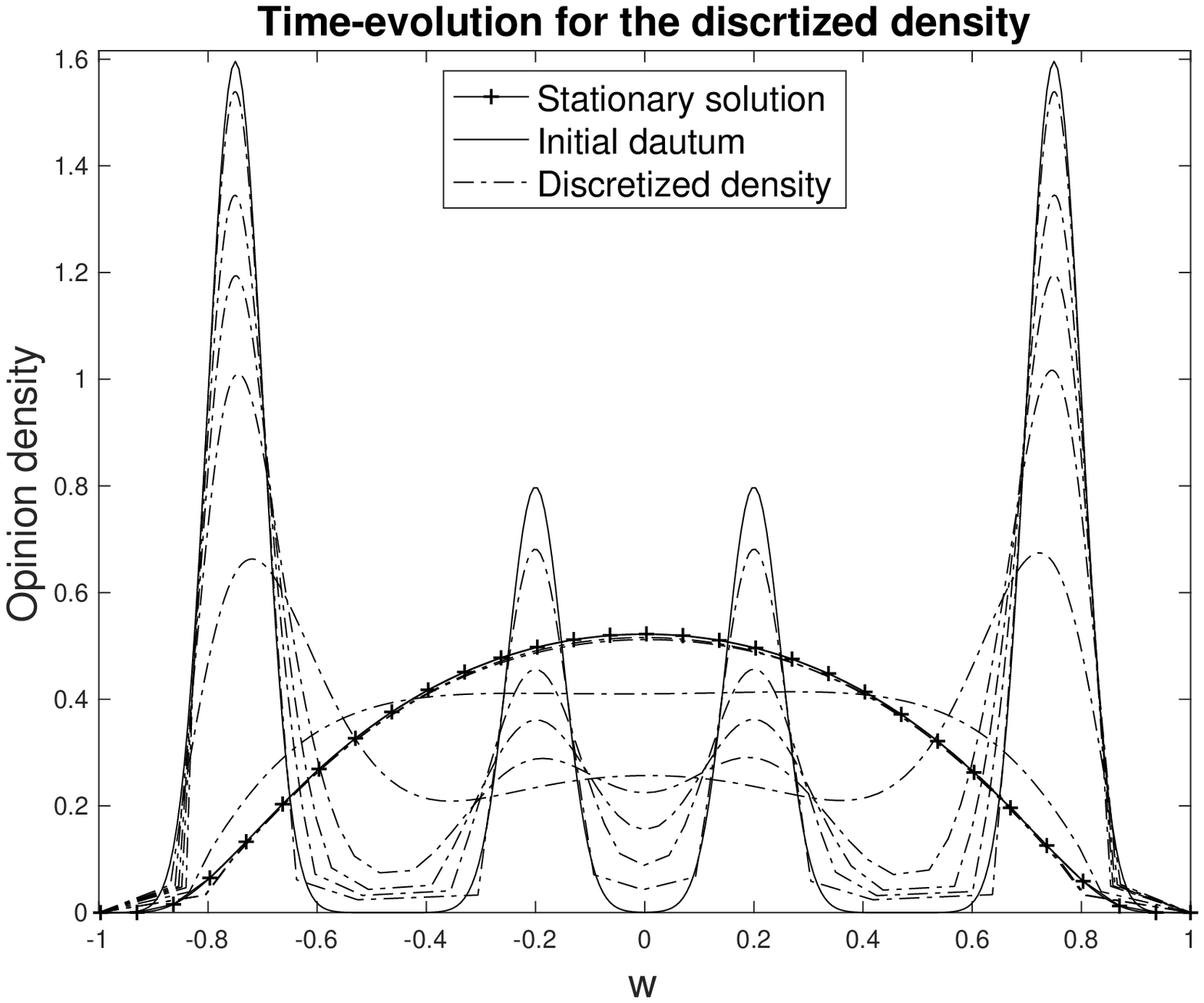}\par
    \includegraphics[width=6cm,height=5cm]{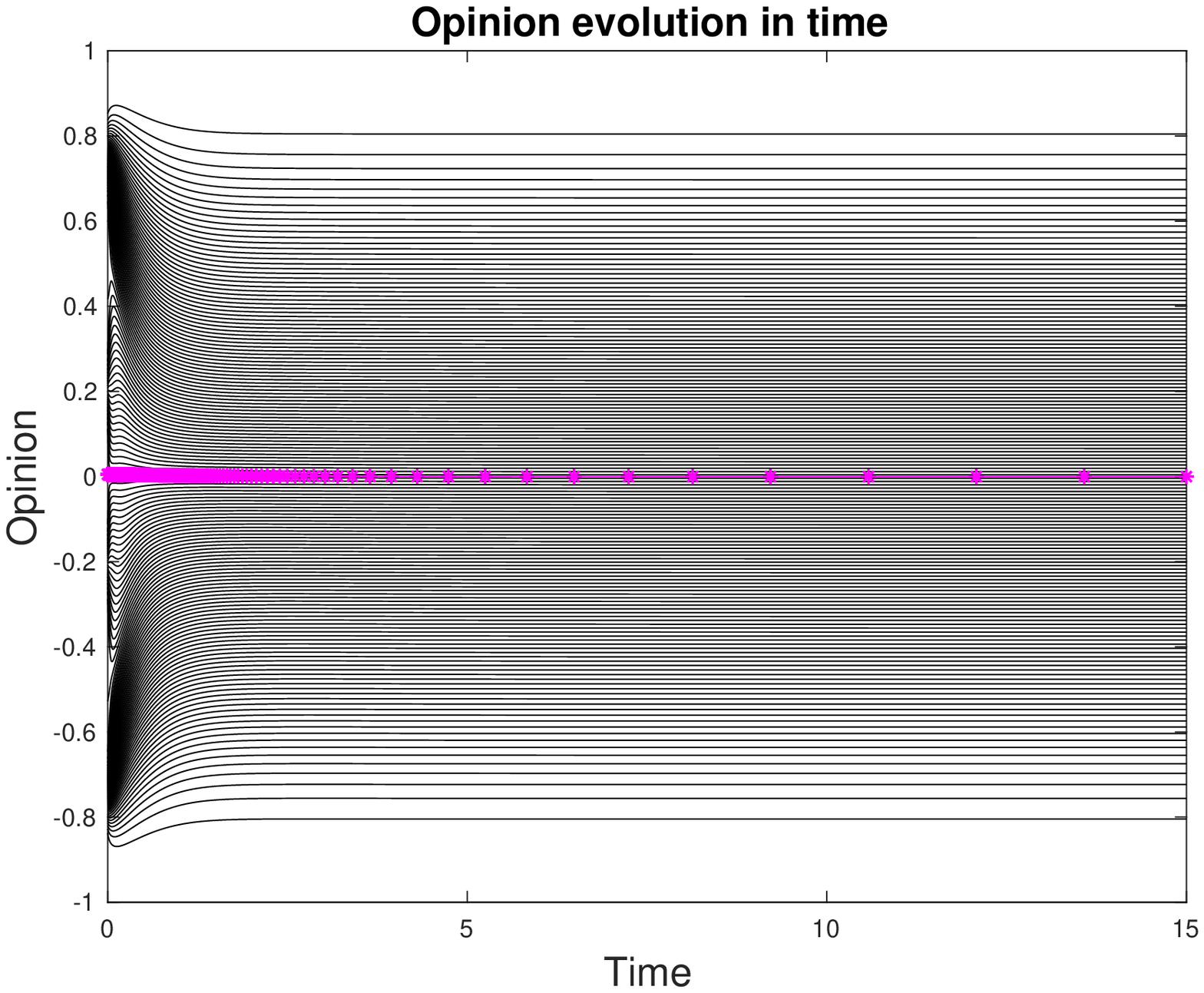}\par
\end{multicols}
\caption{Convergence for different initial data to the stationary state \eqref{eq:stat_a2}, where the initial data are \eqref{eq:ini1}, \eqref{eq:ini2} and \eqref{eq:ini3} respectively. Note that also in this case the mean opinion $m_1$ (star magenta line in the right column) converges to zero asymptotically.}
\label{fig:lindiff2}
\end{center}
\end{figure}
\begin{figure}[htbp]
\begin{center}
\begin{multicols}{2}
    \includegraphics[width=6cm,height=5cm]{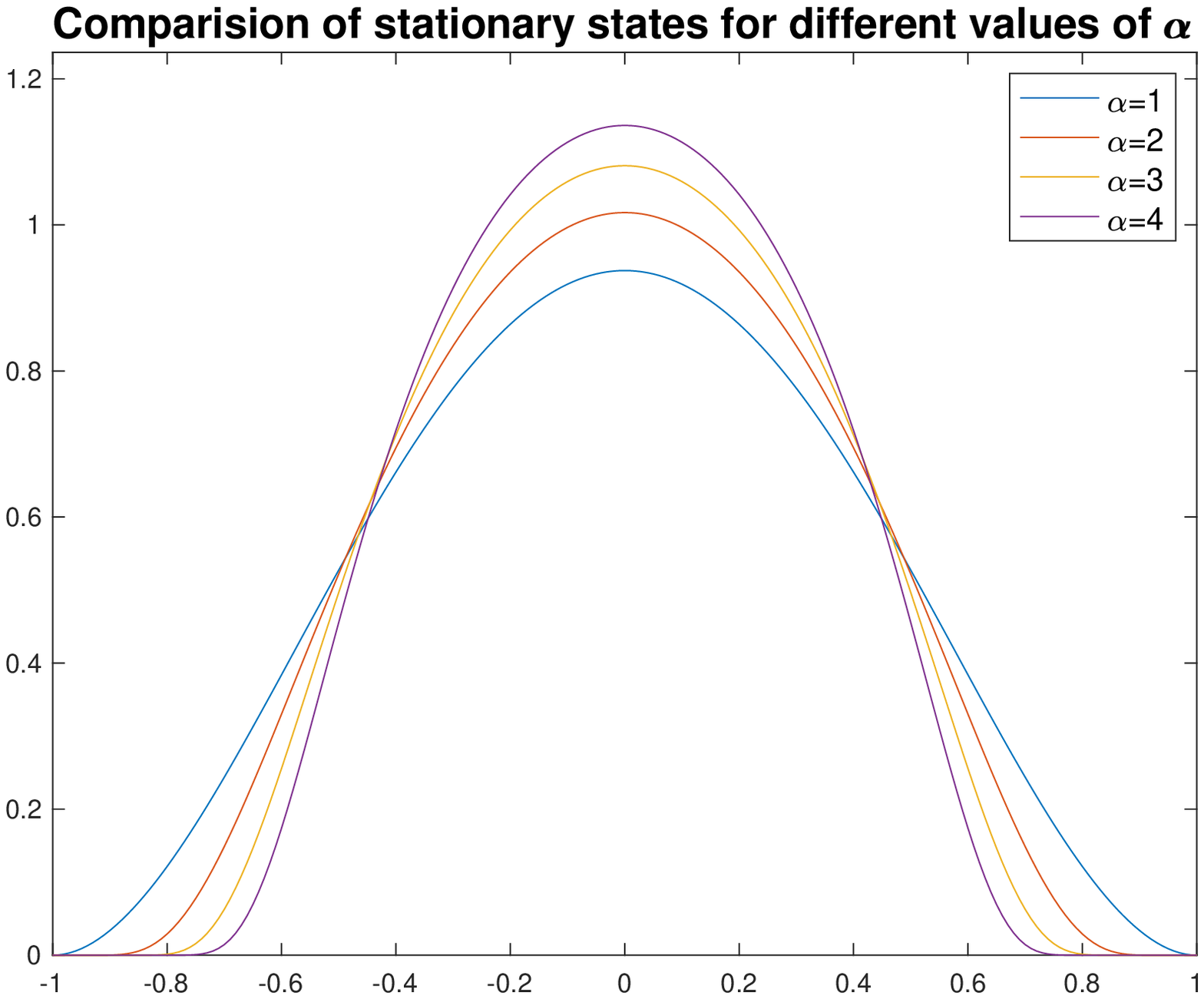}\par 
    \includegraphics[width=6cm,height=5cm]{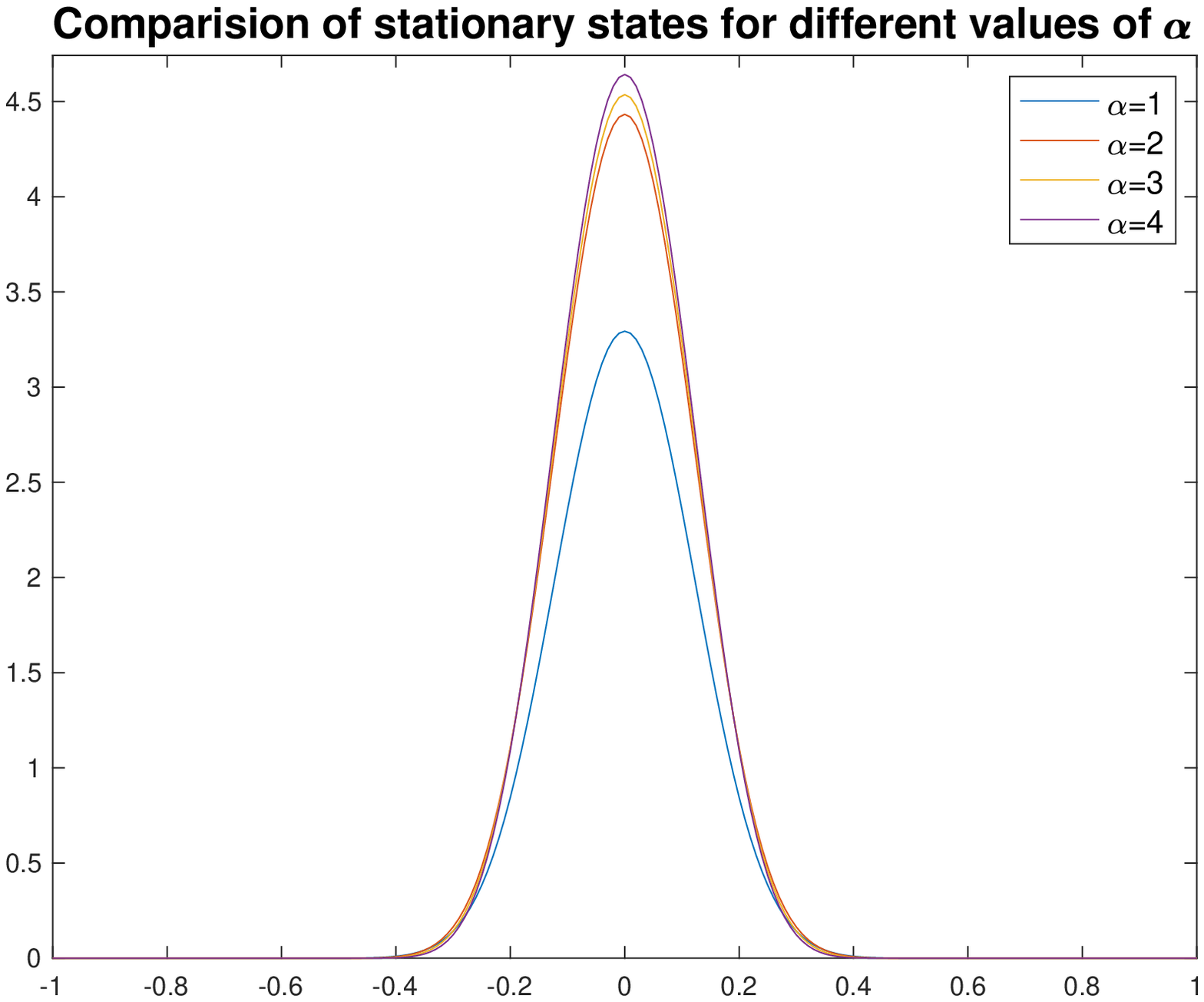}\par 
\end{multicols}
\caption{Comparison of different stationary states \eqref{eq:stat_1} for different values of $\alpha$. On the left the diffusion coefficient $\lambda^2=0.5$, on the right $\lambda^2=0.03$.}
\label{fig:diffalpha}
\end{center}
\end{figure}
We consider here the case of linear diffusion $\phi(u)=u$,
thus \eqref{eq:stationary} reduces to
\[
 \frac{\lambda^2}{2}D^2(w)\partial_w u(w) = (m_1^\infty-\sigma w) u(w).
\]
By standard separation of variable, we obtain
\begin{align*}
    & \frac{\lambda^2}{2}\log\left(u\right) = \mathfrak{D}_\alpha(w,m_1^\infty),
\end{align*}
which, in turn, gives
 \begin{equation}\label{eq:stat_1}
     u_\infty(w)=C_\infty\exp{\left(\frac{2}{\lambda^2}\mathfrak{D}_\alpha(w,m_1^\infty)\right)}, 
 \end{equation}
 where $C_\infty$ is the normalisation constant such that $\|u_\infty\|_{L^1}=\sigma$. In order to get a better understanding of the solutions, we consider below some explicit cases for the function $D$. 
\begin{itemize}
    \item Let us start by taking D as in \eqref{eq:mainD} for $\alpha = 1$
\[
D(w)=(1-w^2)^{\frac{1}{2}}.
\]
In this case, the integral in \eqref{eq:primD} becomes
\begin{align*}
    & \mathfrak{D}_\alpha(w,m_1^\infty) = \frac{m_1^\infty}{2} \log\left(\frac{1+w}{1-w}\right)+\frac{\sigma}{2}\log(1-w^2),
\end{align*}
and
 \begin{equation}\label{eq:stat_a1}
     u_\infty(w)=C_\infty(1+w)^{\frac{m_1^\infty+\sigma}{\lambda^2}}(1-w)^{\frac{\sigma-m_1^\infty}{\lambda^2}}. 
 \end{equation}
The well-posedness of the stationary state \eqref{eq:stat_a1} in $I$, is guaranteed  provided that 
$$
m_1^\infty+\sigma >0, \mbox{ and } \sigma-m_1^\infty>0,
$$
but, according to the consideration at the beginning of this section, we have that  
\begin{align*}
    & \frac{d}{dt}m_1(t) = -\int_{-1}^{1}\left(1-w^2\right) \partial_w u dw = -2 m_1(t).
\end{align*}
In particular, $m_1(t) \to m_1^\infty= 0$ as $t\to +\infty$, and \eqref{eq:stat_a1} reduces to 
 \begin{equation*}
     u_\infty(w)=C_\infty(1+w)^{\frac{\sigma}{\lambda^2}}(1-w)^{\frac{\sigma}{\lambda^2}}.
 \end{equation*}
In Figure \ref{fig:lindiffa1}, we provide some numerical evidence of the converge to the above stationary state from different initial data. We set the mass of opinion $\sigma=0.6$ and we choose 
\begin{enumerate}
    \item a single spike centred in the origin, miming a population  with opinions symmetrically distributed around the average opinion $w=0$
\begin{equation}\label{eq:ini1}
\bar{u}(w) =  \frac{\sigma}{\sqrt{2\pi(0.05)^2}}  e^\frac{-w^2}{2(0.05)^2},
\end{equation}
    \item two spikes with different weights $\sigma_1=0.4$ $\sigma_2=0.2$, non symmetric around the origin,
\begin{equation}\label{eq:ini2}
     \bar{u}(w) =  \frac{\sigma_1}{\sqrt{2\pi(0.05)^2}}  e^\frac{-(w+0.75)^2}{2(0.05)^2}+ \frac{\sigma_2}{\sqrt{2\pi(0.05)^2}}  e^\frac{-(w-0.5)^2}{2(0.05)^2},
     \end{equation}
         \item a combination of four spikes  symmetrically distributed around the origin, with weight $\sigma_1=0.2$ $\sigma_2=0.1$
    \begin{equation}\label{eq:ini3}
    \begin{split}
      \bar{u}(w) &=  \frac{\sigma_1}{\sqrt{2\pi(0.05)^2}}  e^\frac{-(w+0.75)^2}{2(0.05)^2}+ \frac{\sigma_2}{\sqrt{2\pi(0.05)^2}}  e^\frac{-(w+0.2)^2}{2(0.05)^2}\\
      &+ \frac{\sigma_2}{\sqrt{2\pi(0.05)^2}}  e^\frac{-(w-0.2)^2}{2(0.05)^2}+ \frac{\sigma_1}{\sqrt{2\pi(0.05)^2}}  e^\frac{-(w-0.75)^2}{2(0.05)^2}.
        \end{split}
\end{equation}
\end{enumerate}
All the simulations are performed using the deterministic particle approximation introduced in the previous sections as a numerical scheme for \eqref{main}. More precisely, given one of the above initial data, we construct an initial distribution of opinions according to \eqref{eq:ini_part}, then we solve the ODEs system \eqref{eq:particles_gen} and we reconstruct the density as in \eqref{rhoN}. 

    \item We now consider the mobility function as in \eqref{eq:mainD} for $\alpha=2$
\[
D(w)=(1-w^2).
\]
In this case, the stationary solution reduces to
 \begin{equation}\label{eq:stat_a2}
     u_\infty(w)=C_\infty(1+w)^{\frac{m_1^\infty}{2\lambda^2}}(1-w)^{\frac{-m_1^\infty}{2\lambda^2}}e^{\frac{m_1^\infty w-\sigma}{\lambda^2(1-w^2)}}, 
 \end{equation}
 where $C_\infty$ is the usual normalisation constant. The well-posedness of the steady state \eqref{eq:stat_a2} is not guaranteed \emph{a priori} and it seem to be strongly dependent on the relation between $m_1^\infty$ and $\sigma$. As mentioned above, the evolution equation for the second moment is not closed, here it is corresponds to
 \begin{equation*}
 \frac{d}{dt} m_1(t) = -2\lambda^2 m_1(t) + 2\lambda^2 m_3(t),
 \end{equation*}
where $m_3$ denotes the third order moment of $u$. Let us observe, that for any $k\geq 2$, the $k-$th moment evolves according to
 \begin{align*}
 \frac{d}{dt} m_k(t) =&\frac{\lambda^2}{2}(k-1) m_{k-2}(t)+m_1(t)m_{k-1}(t)\\
 &-k\left[\lambda^2(k+1)+\sigma\right]m_k(t)+\frac{\lambda^2}{2}(k+3) m_{k+2}(t),
 \end{align*}
that is a dynamical system with infinite dimension, whose study requires deeper investigations which exceed the scope of this paper. However, by introducing the variance function 
\[
 \mbox{Var}\left[u\right](t)=m_2(t) - (m_1(t))^2,
\]
it is easy to see that $\left(m_1(t)\right)^2\leq 1-\mbox{Var}\left[u\right](t)$, with $\mbox{Var}\left[u\right](t)\in \left[0,1\right]$. 
Since also $|m_3|\leq 1$, this suggests the decay in time of the mean opinion. This is supported by the numerical results in Figure \ref{fig:lindiff2}. 

\item The argument for generic values of $\alpha$ is more involved, but does not present further complications. For this reason, we decided to omit the proof in the present paper.
Nonetheless, we highlight that the parameter $\alpha$ plays an important modelling role: as shown in Figure \ref{fig:diffalpha} (left), the support of the stationary state shrinks when alpha increases, thus providing an higher \emph{consensus} around the limiting mean opinion (in the previous examples the mean opinion is $w=0$). A similar effect can be reached while decreasing the diffusion coefficient $\lambda$. Indeed, for smaller values of $\lambda$, the contribution of the reaction part is stronger and the stationary states are more concentrated, see Figure \ref{fig:diffalpha} (right).
\end{itemize}

\subsubsection{Nonlinear diffusion} 
\begin{figure}[htbp]
\begin{center}
\begin{multicols}{2}
    \includegraphics[width=6cm,height=5cm]{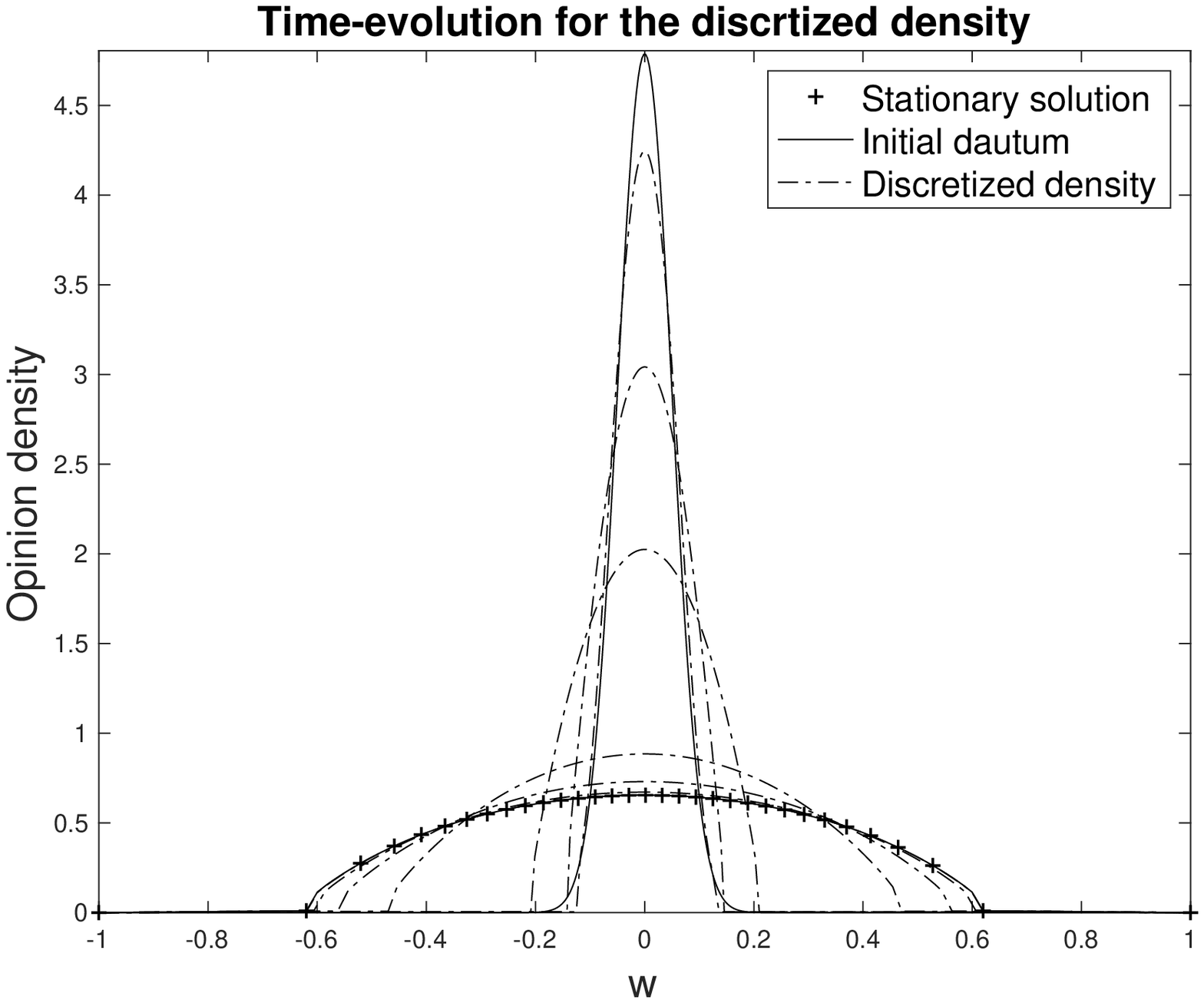}\par 
    \includegraphics[width=6cm,height=5cm]{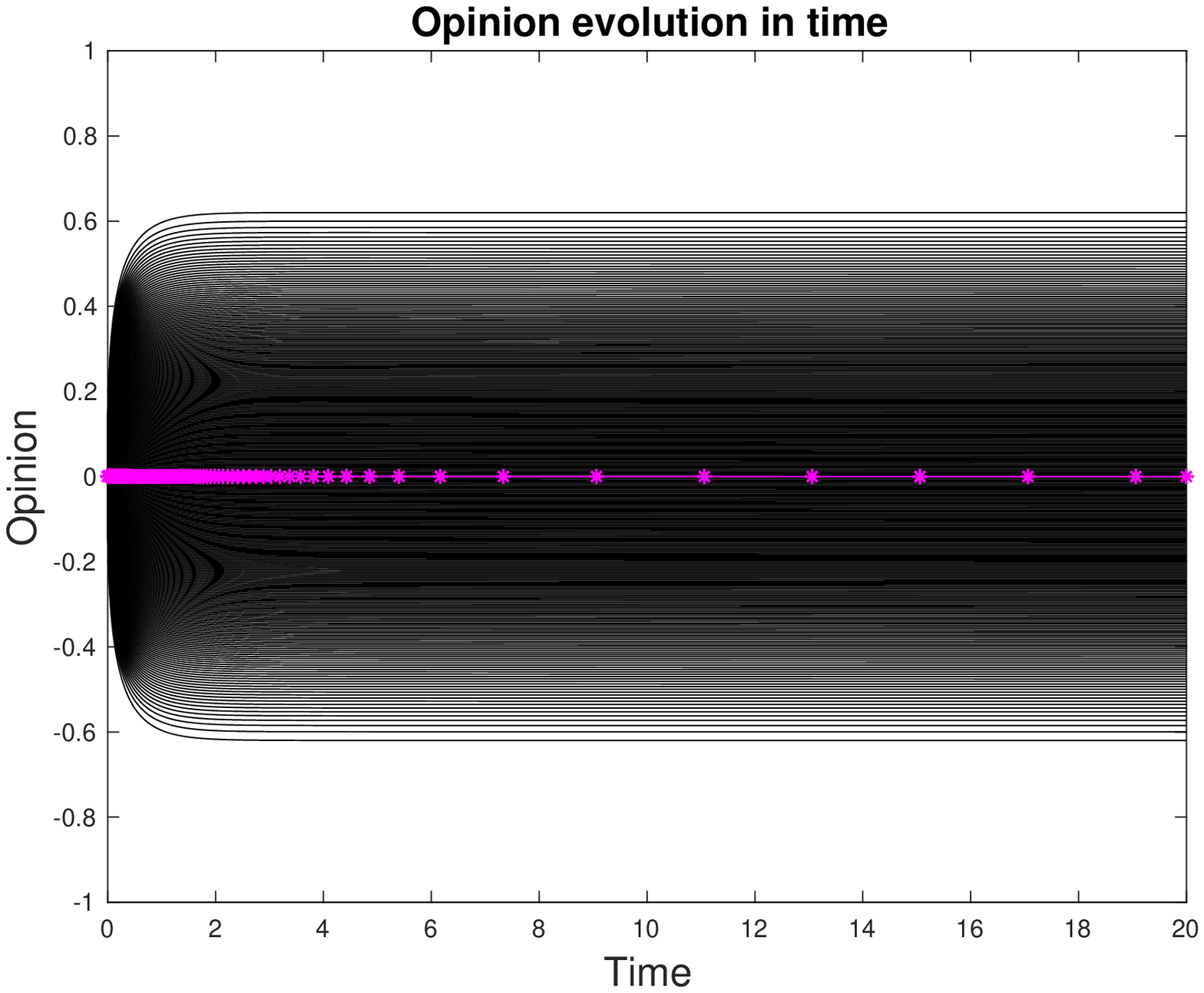}\par 
\end{multicols}
\begin{multicols}{2}
    \includegraphics[width=6cm,height=5cm]{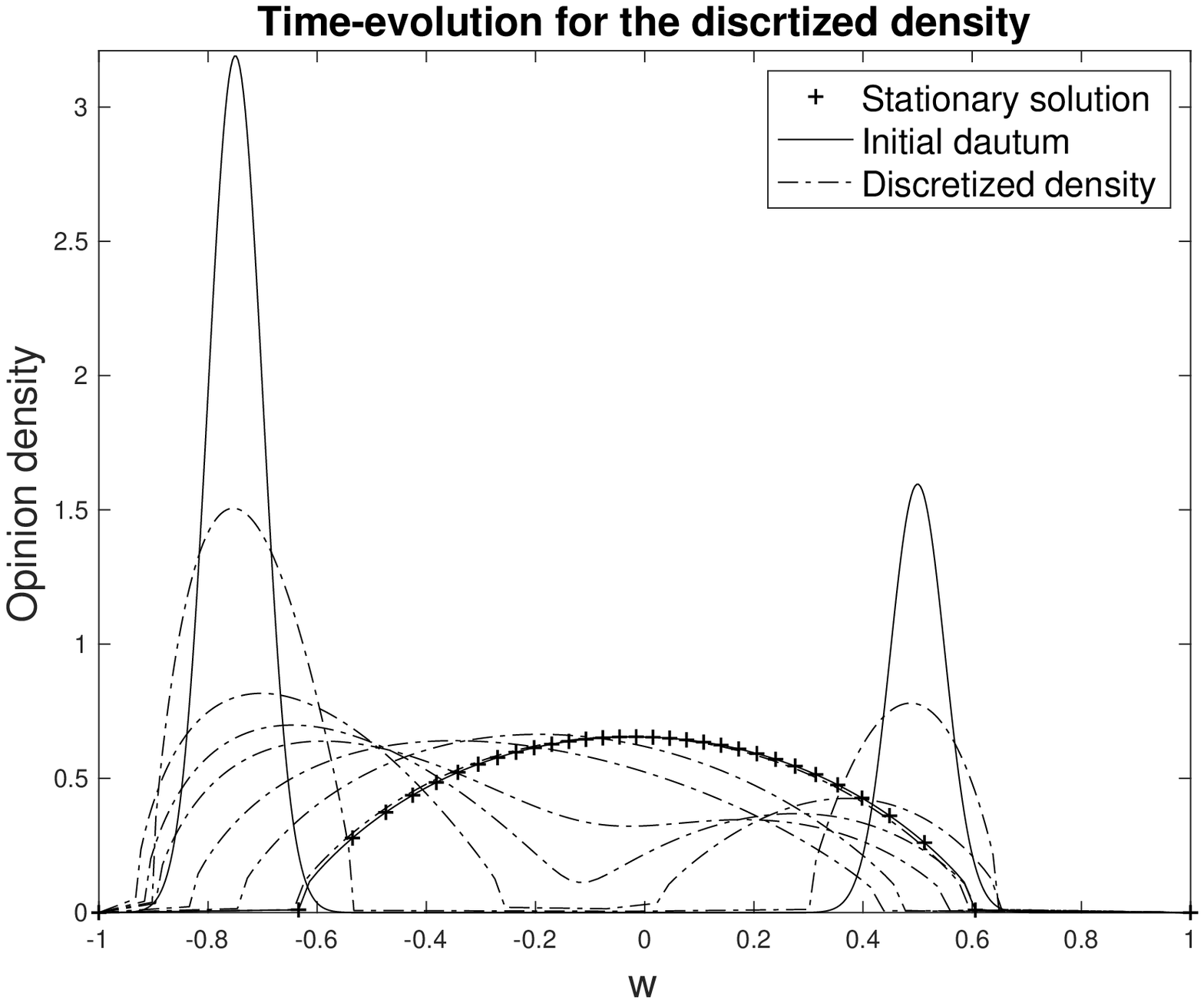}\par
    \includegraphics[width=6.5cm,height=5cm]{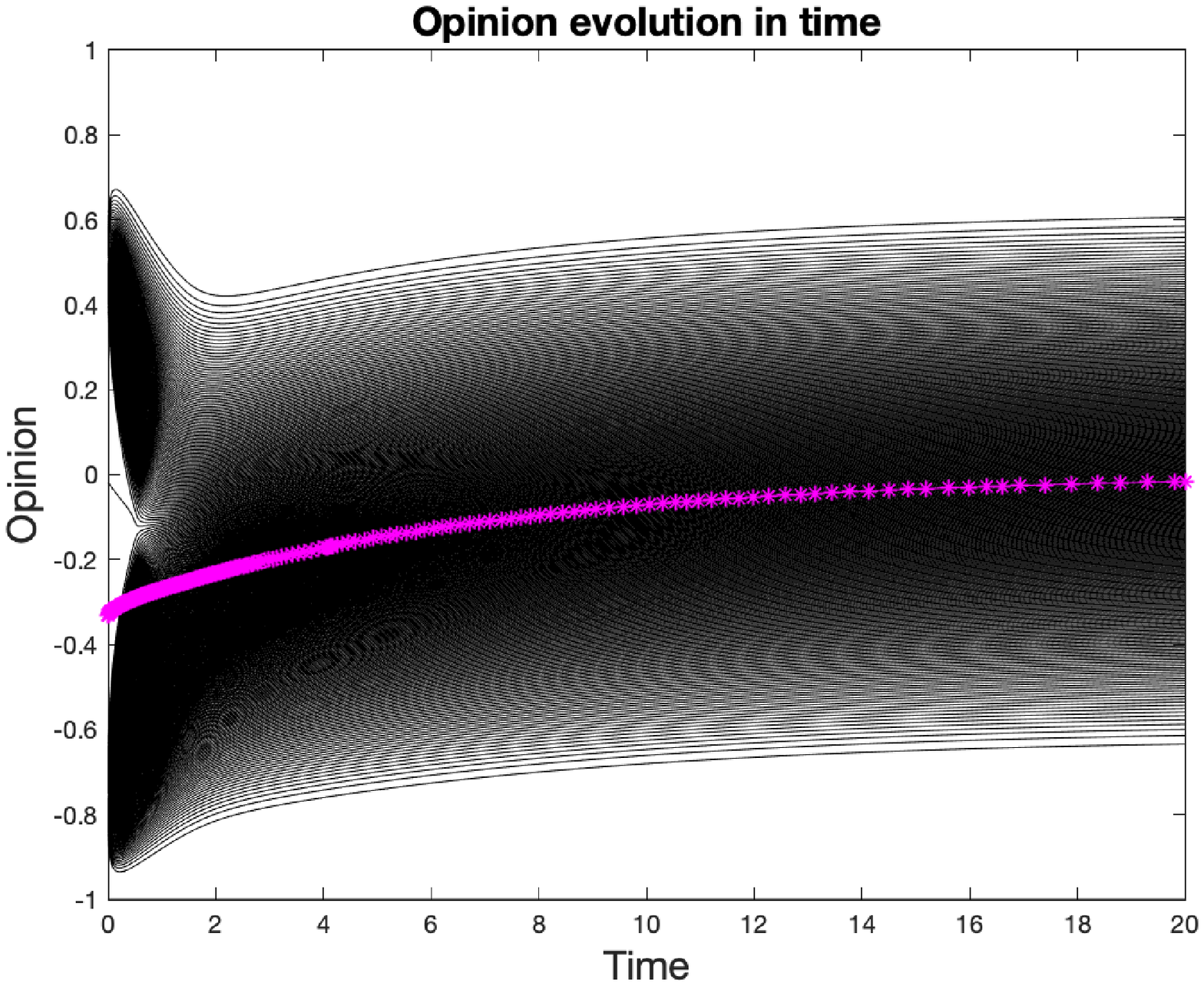}\par
\end{multicols}
\begin{multicols}{2}
    \includegraphics[width=6cm,height=5cm]{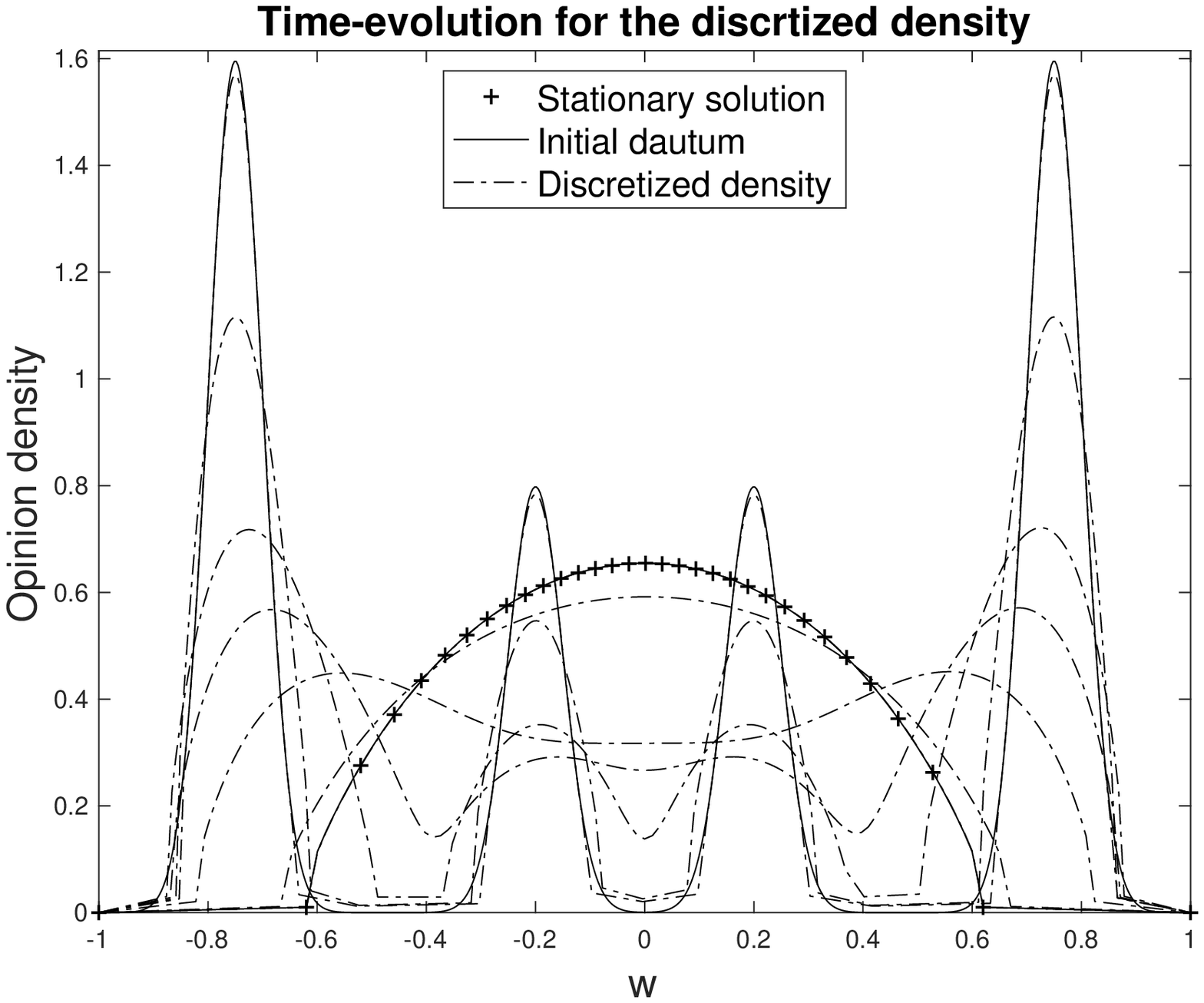}\par
    \includegraphics[width=6cm,height=5cm]{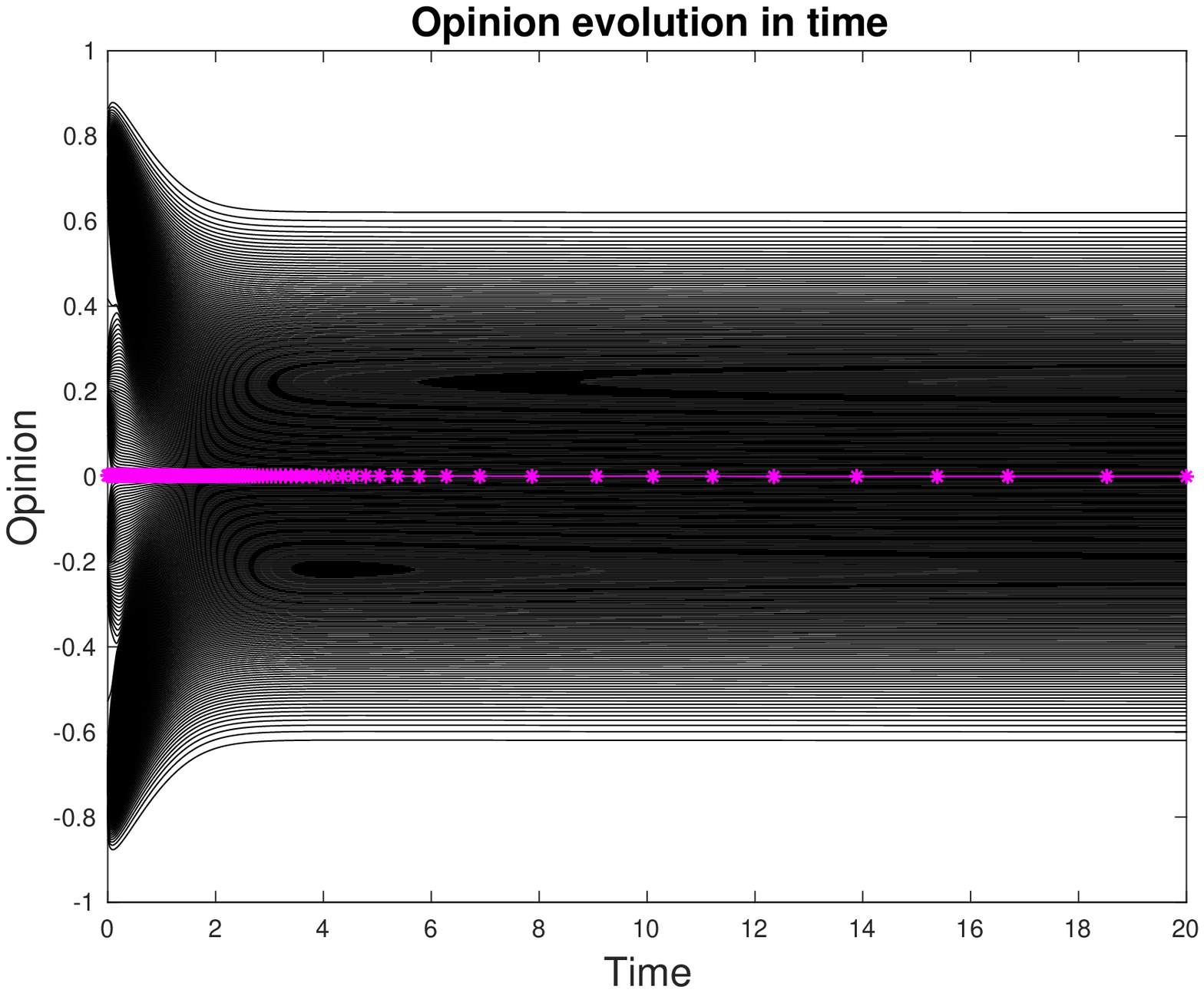}\par
\end{multicols}
\caption{Convergence for the initial data \eqref{eq:ini1}, \eqref{eq:ini2} and \eqref{eq:ini3} to the stationary state \eqref{eq:ststnon2}, with $\alpha=1$. Note that the nonlinearity in the diffusion produces stationary solutions with supports that are smaller than the one we have seen in the linear diffusion case. }
\label{fig:nlindiffa1}
\end{center}
\end{figure}
\begin{figure}[htbp]
\begin{center} 
    \includegraphics[scale=0.6]{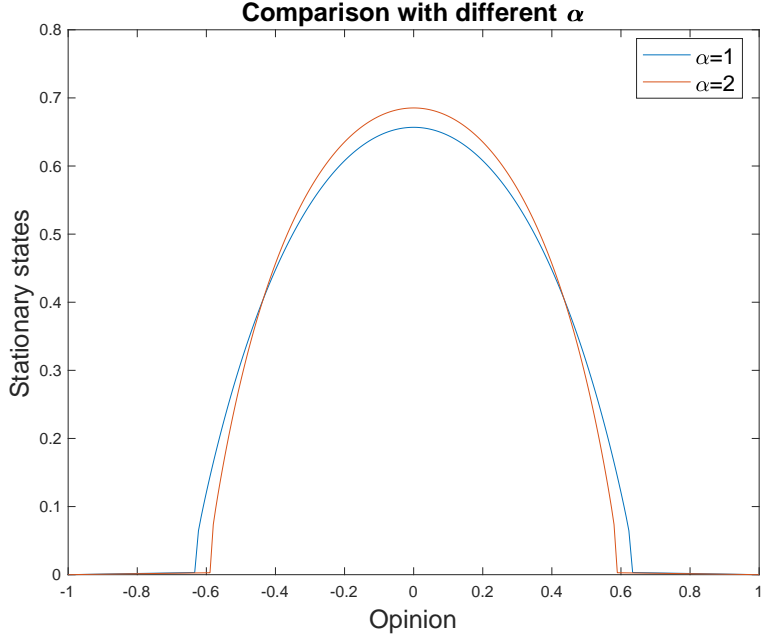}\par 
\caption{Stationary states in \eqref{eq:ststnon2} for different values of $\alpha$.}
\label{fig:ndiffalpha}
\end{center}
\end{figure}
Here we discuss the case of  nonlinear diffusion of a \emph{porous medium} type, namely
\[
  \phi (u)=\frac{u^\gamma}{\gamma}, \quad \gamma>1.
\] 
The stationary states, then, correspond to the solutions of
\begin{equation}\label{eq:ststnon1}
  \partial_w\left(\frac{\lambda^2}{2}D^2(w)u\partial_w \frac{u(w)^{\gamma-1}}{\gamma-1} -\left(m_1^\infty-\sigma w \right)u(w) \right) = 0,
\end{equation}
which can be rewritten as
\begin{equation*}
  \partial_w\left(\frac{\lambda^2}{2} \frac{u(w)^{\gamma-1}}{\gamma-1} -\mathfrak{D}_\alpha(w,m_1^\infty)  \right)= 0.
\end{equation*}
Since the physical solutions are non negative, we deduce
\begin{equation}\label{eq:ststnon2}
  u_\infty(w)=\left[\frac{2(\gamma-1)}{\lambda^2}\left(C+\mathfrak{D}_\alpha(w,m_1^\infty)\right)_{+}\right]^{\frac{1}{\gamma-1}},
\end{equation}
where $C$ is a suitable normalization constant. 
In Figure \ref{fig:nlindiffa1} we show the convergence towards the stationary state in the case $D(w)=(1-w^2)^{(1/2)}$ and $\gamma =2$, with the same initial data and diffusion coefficient of the previous examples. Observe, that the nonlinear diffusion induces a stronger \emph{consensus} around the compromise value $w=0$. In Figure \ref{fig:ndiffalpha} we plot the stationary states corresponding to $\alpha=1$ and $\alpha=2$.

\subsection{Simulations with many species} We now explore the large-time behaviour for the many species case. The analytical study in this case became more complicated, since it requires to check the solvability of a system of coupled equations in the form \eqref{eq:stationary}. We here focus our attention on the numerical comparison among the large time behaviours of systems \eqref{eq:sys_FL} and \eqref{eq:sys_FLT}.

\begin{figure}[htbp]
\begin{center}
\begin{multicols}{2}
    \includegraphics[width=6cm,height=5cm]{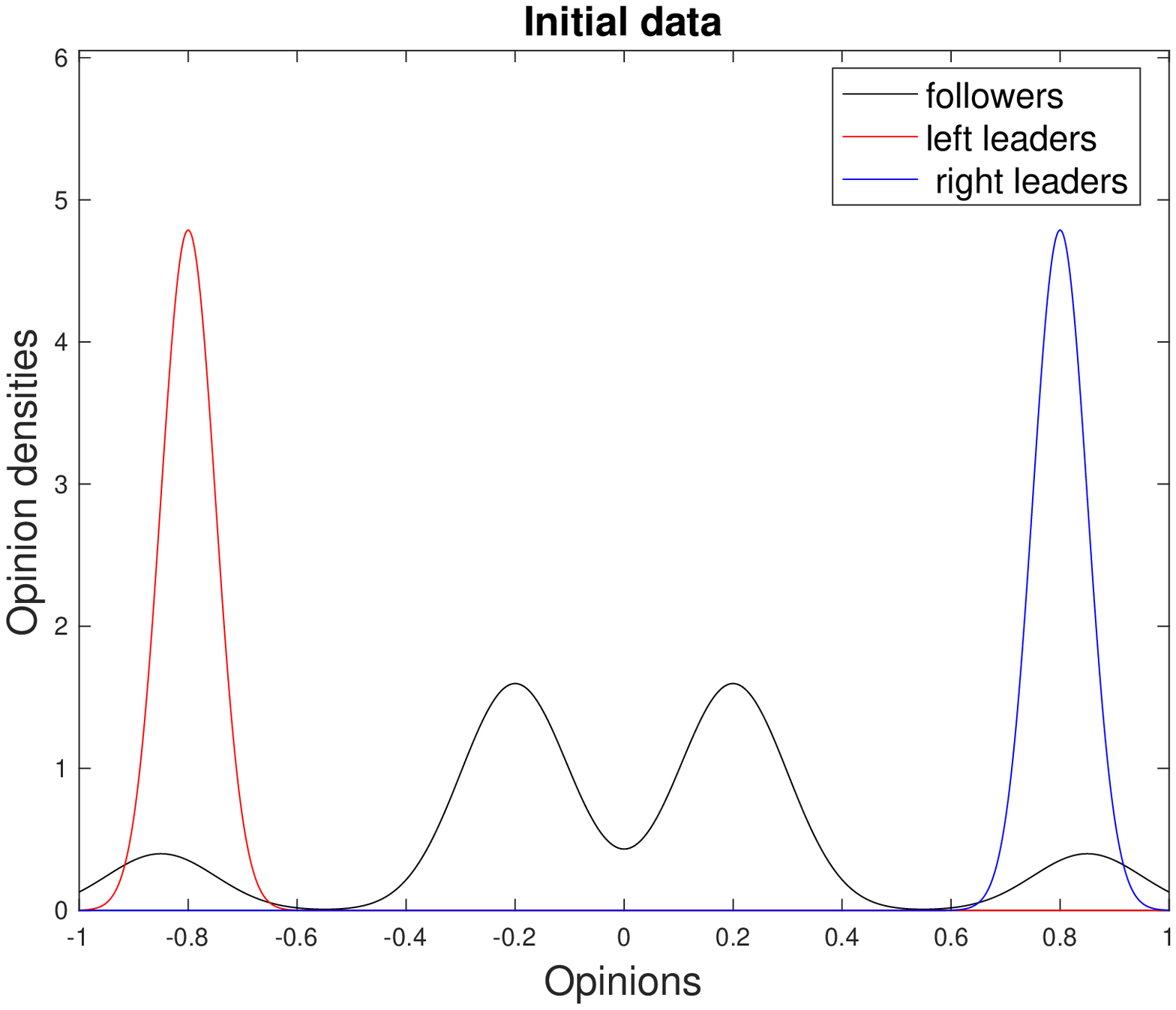}\par 
    \includegraphics[width=6cm,height=5cm]{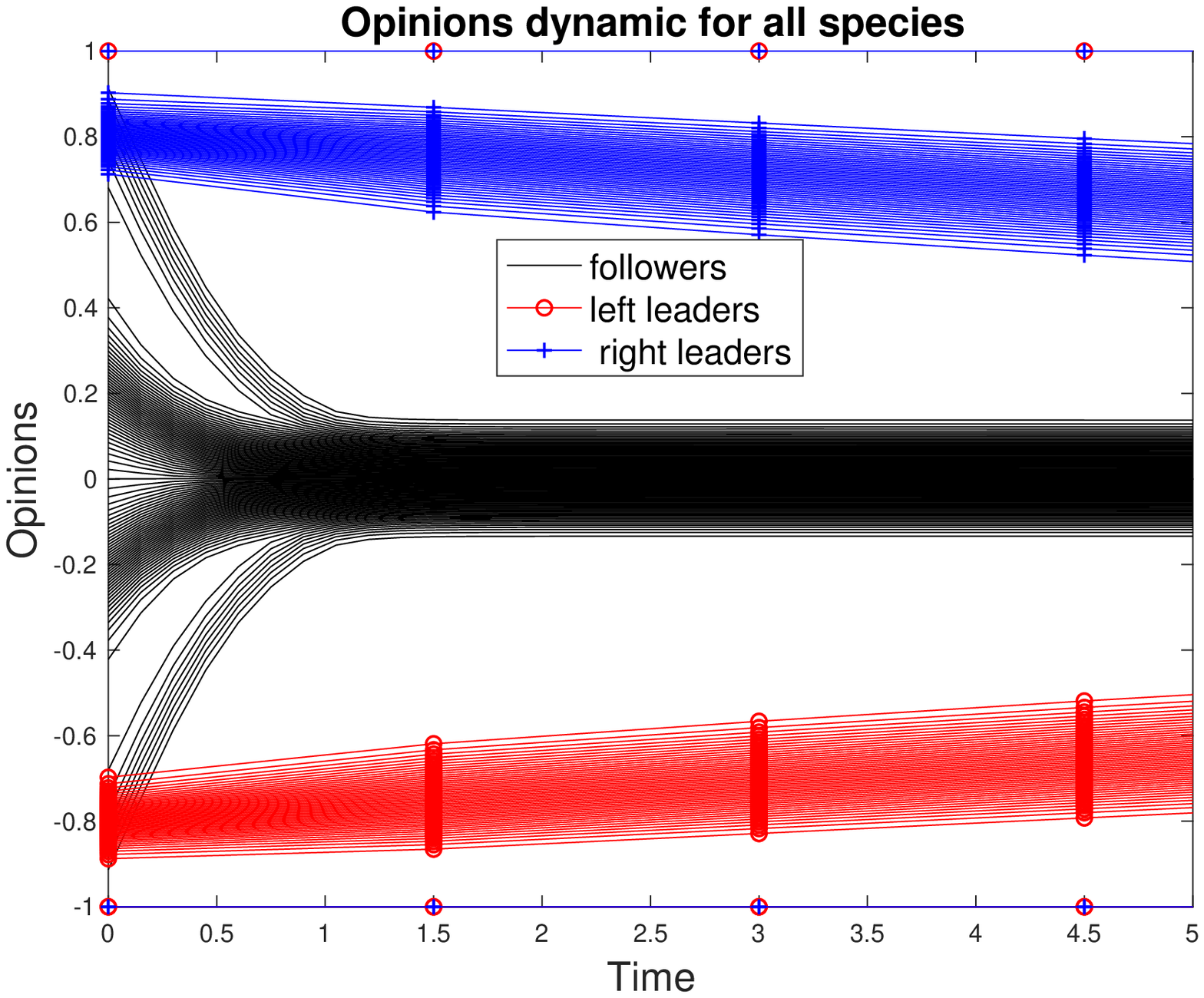}\par 
\end{multicols}
\begin{multicols}{2}
    \includegraphics[width=6cm,height=5cm]{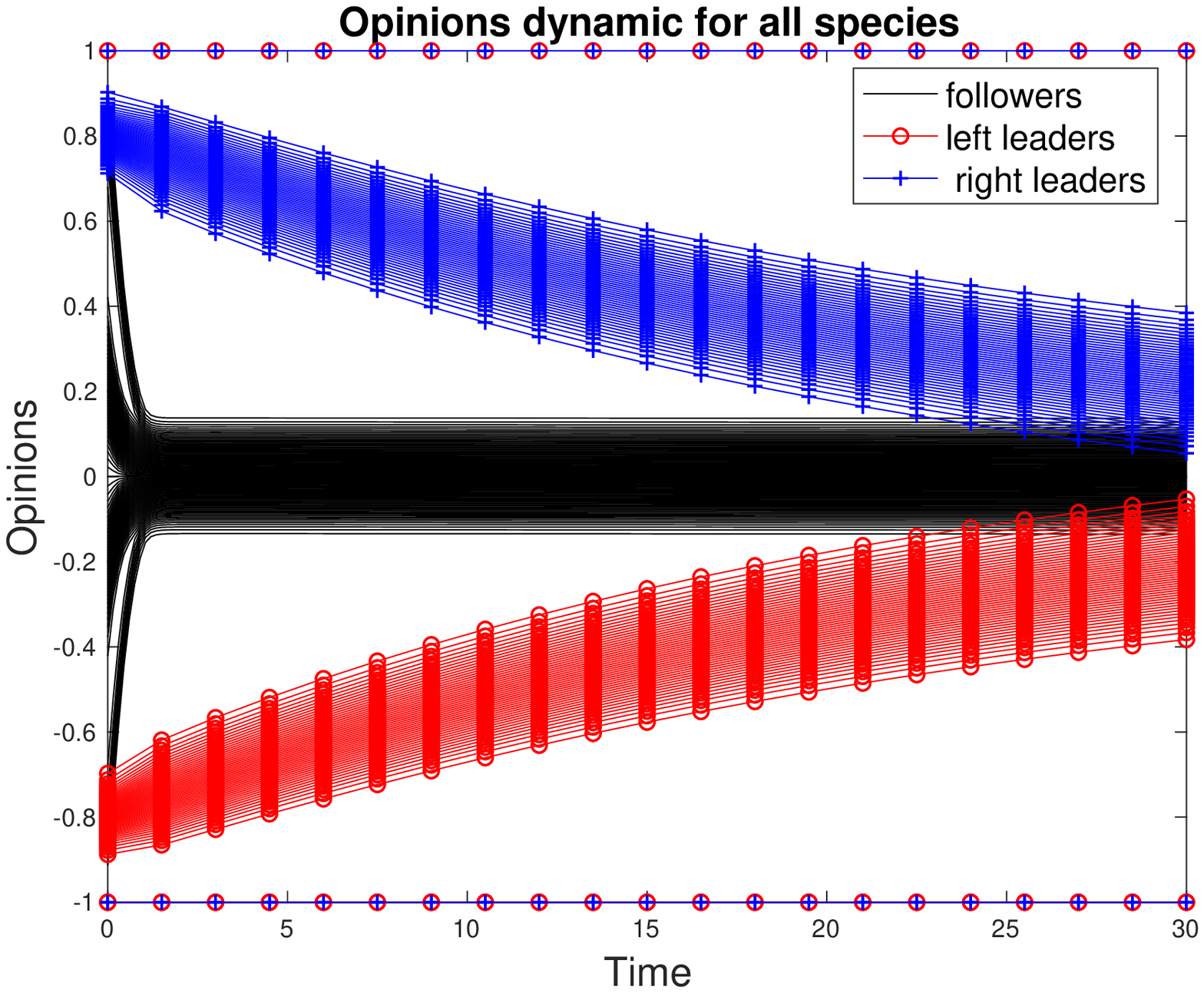}\par
    \includegraphics[width=6cm,height=5cm]{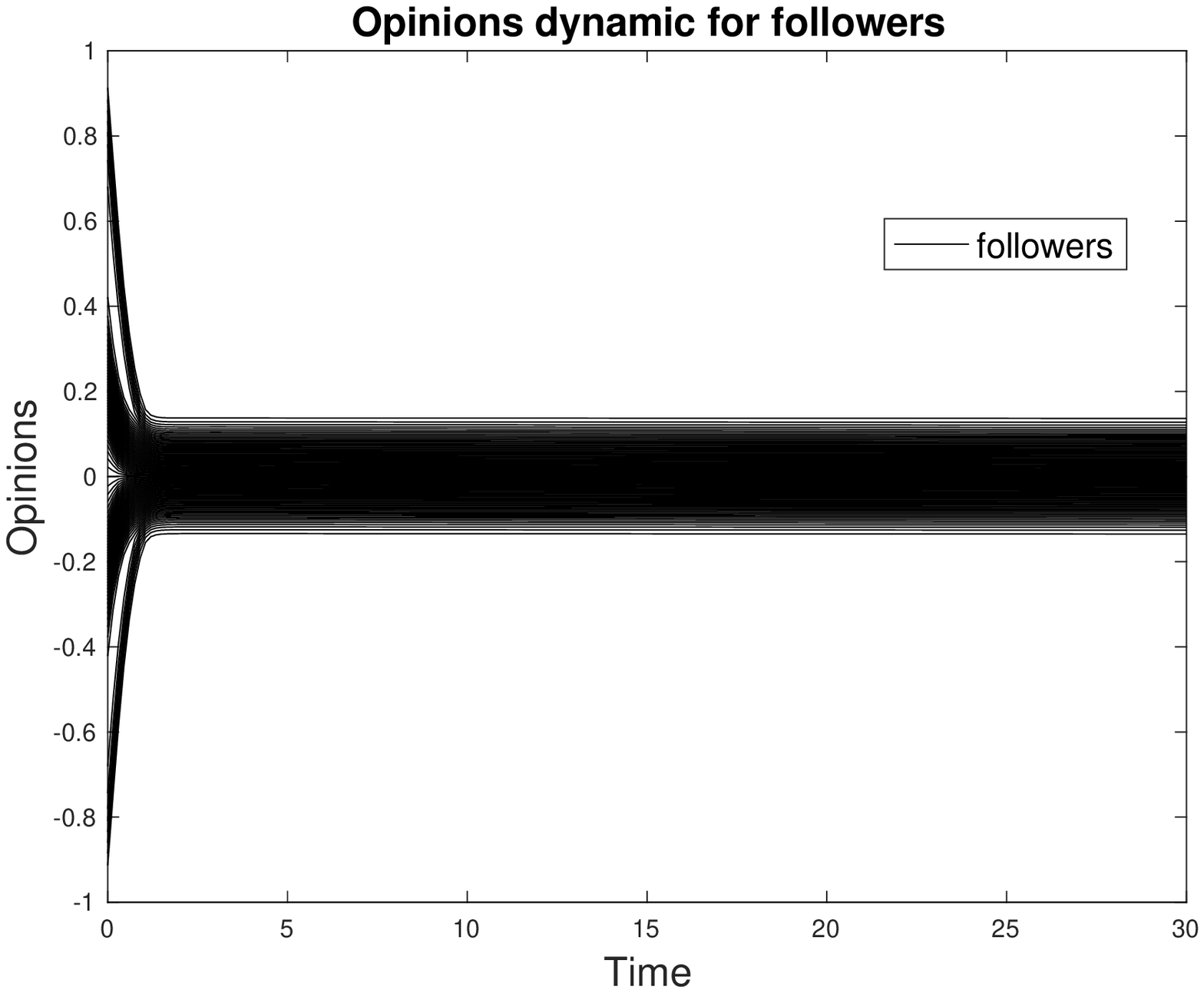}\par
\end{multicols}
\caption{Opinion dynamics in presence of \emph{equally-strong} leaders. Top-left initial data for followers(black), left leaders (red) and right leaders (blue). Top-right and bottom-left opinions evolutions for all the groups in short and long term-respectively. Bottom-right opinions evolutions for the followers.}
\label{fig:FLL1}
\end{center}
\end{figure}

In Figure \ref{fig:FLL1}, we show the opinions dynamics in the Follower-Leader system \eqref{eq:sys_FL}, that we rewrite below for the reader convenience,
\begin{equation*}
\begin{split}
& \partial_t f = \partial_w \Big(\frac{\lambda_f^2}{2}D_f^2 (w) \partial_w \phi_f(f) - f \big( \mathcal{P}_{ff}[f] + \mathcal{P}_{fl}[l] + \mathcal{P}_{fr}[r] \big)\Big)\\
&  \partial_t l = \partial_w \Big(\frac{\lambda_l^2}{2}D_l^2 (w) \partial_w \phi_l(l) - l \big(\mathcal{P}_{ll}[l] + \mathcal{P}_{lr}[r]\big)\Big) ,\\
&  \partial_t r = \partial_w \Big(\frac{\lambda_r^2}{2}D_r^2 (w) \partial_w \phi_r(r) - r \big(\mathcal{P}_{rr}[r] + \mathcal{P}_{rl}[l]\big)\Big).
\end{split}
\end{equation*}
 In particular, we assume that there are two groups of Leaders with equal mass and symmetric centres of mass. Moreover, we assume that the initial followers' opinions are symmetrically distributed, see Figure \ref{fig:FLL1} top-left. In this simulation, we fix the diffusion coefficients $\frac{\lambda_u^2}{2}=0.03$ and the mobility functions $D_u^2 (w)=(1-w^2)$ for $u\in\left\{f,l,r\right\}$, taking $\sigma_f=1$ and $\sigma_l=\sigma_r=0.6$. The compromise functions are the following: for the interactions among the same species we set
 \begin{align*}
  & P_{ff}(w,v)=P_{ll}(w,v)=P_{rr}(w,v)=1, 
  \end{align*}
while the follower-leaders interactions are given by  
  \begin{align*}
  & P_{fr}(w,v)=P_{fl}(w,v)=1-w^2.
  \end{align*}
  In this way, we are modelling the situation where a follower agent with an extreme opinion is less likely to revise its own opinion. 
  According to this, we set
    \begin{align*}
  & P_{lr}(w,v)=P_{rl}(w,v)=0.001(1-w^2).
 \end{align*}
 As expected, the opinions evolution is symmetric and solution converges to a compromise (the opinion $w=0$).
 
 In Figure \ref{fig:FLL2} we consider the same setting as before, but this time one leader group is \emph{stronger} than the other, namely $\sigma_l=0.6$ and $\sigma_r=0.2$. This difference strongly effects the evolution of the followers in the short period, see Figure \ref{fig:FLL2} top-right, while for long time we still observe convergence to the compromise value.
\begin{figure}[htbp]
\begin{center}
\begin{multicols}{2}
    \includegraphics[width=6cm,height=5cm]{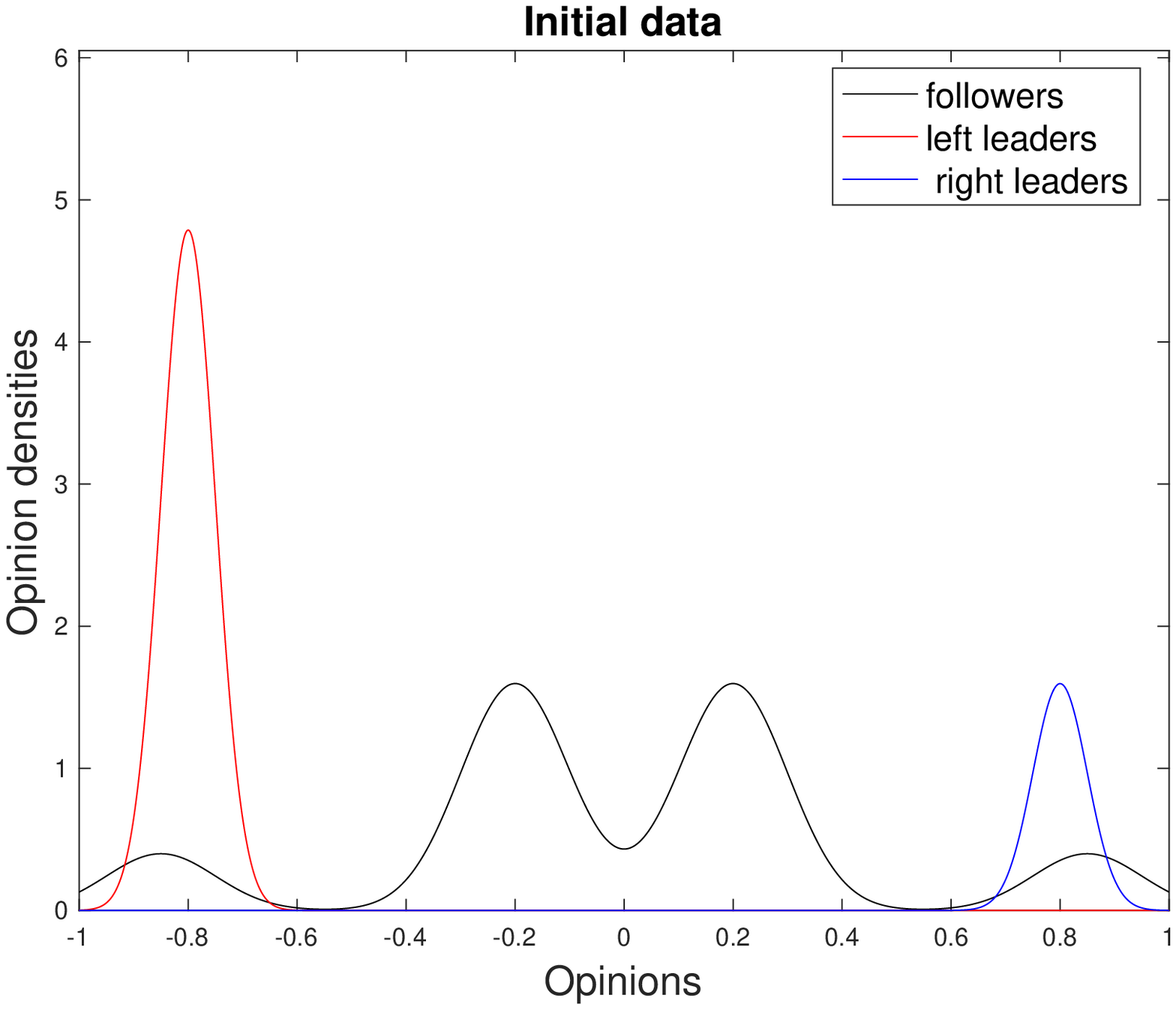}\par 
    \includegraphics[width=6cm,height=5cm]{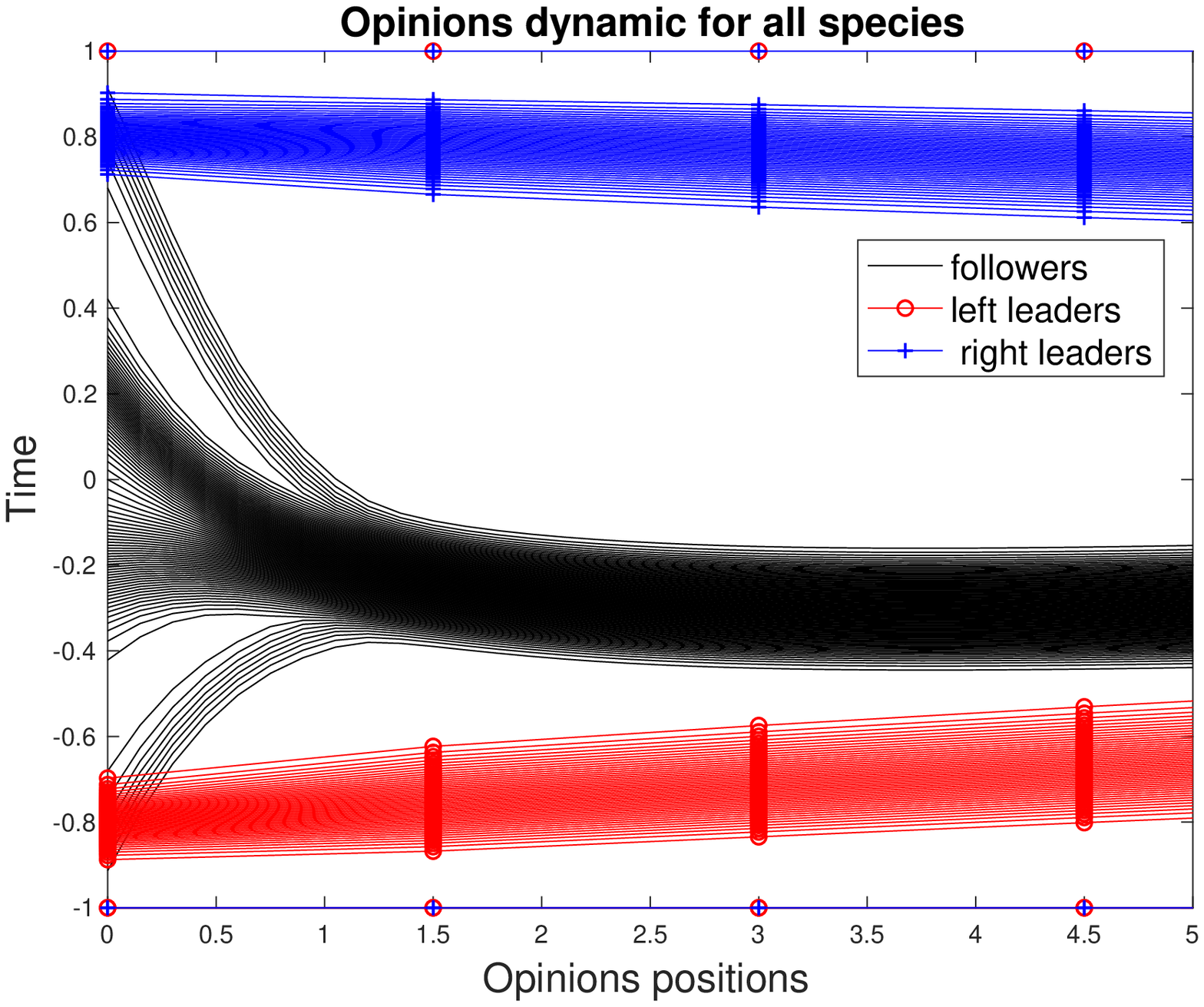}\par 
\end{multicols}
\begin{multicols}{2}
    \includegraphics[width=6cm,height=5cm]{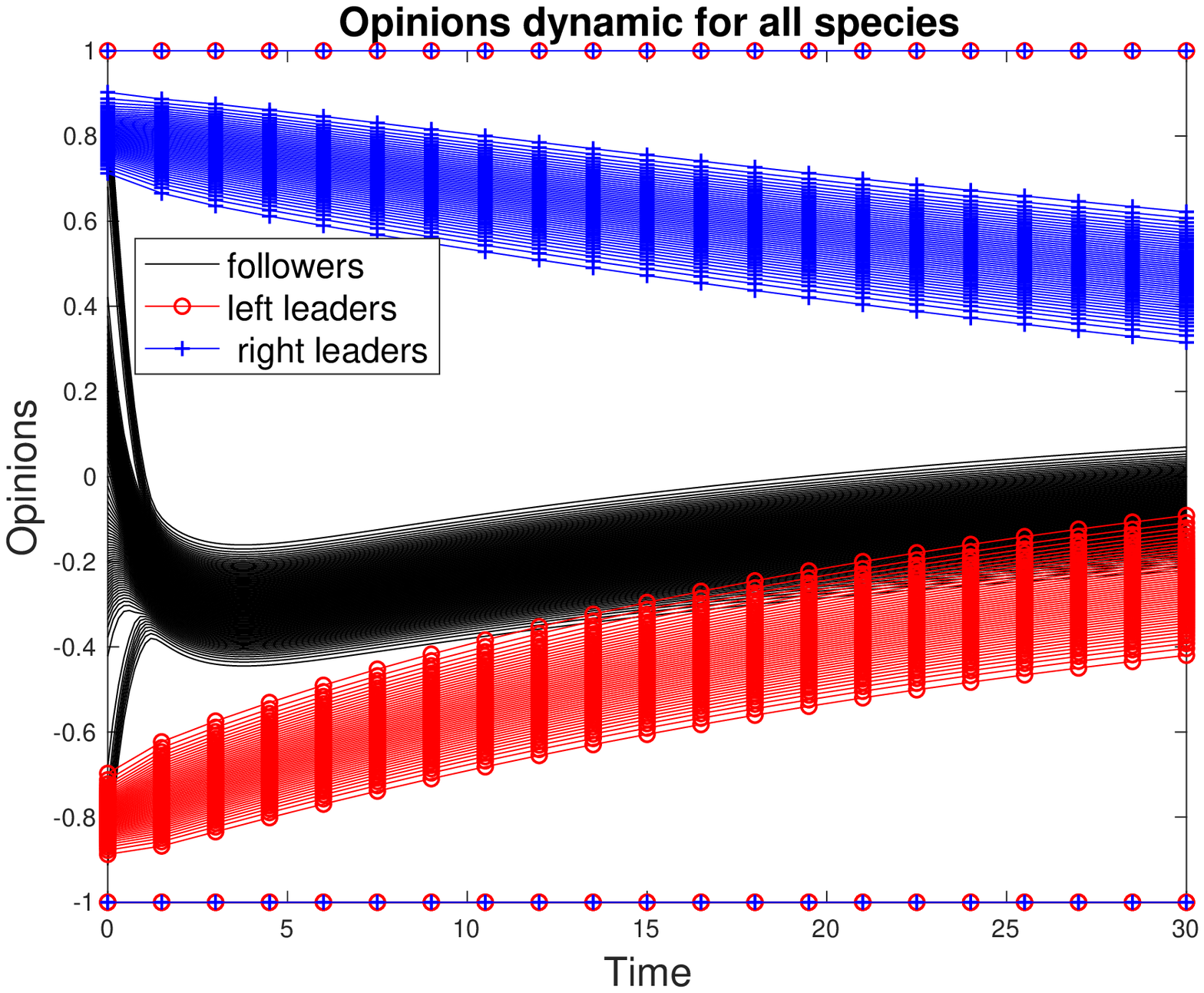}\par
    \includegraphics[width=6cm,height=5cm]{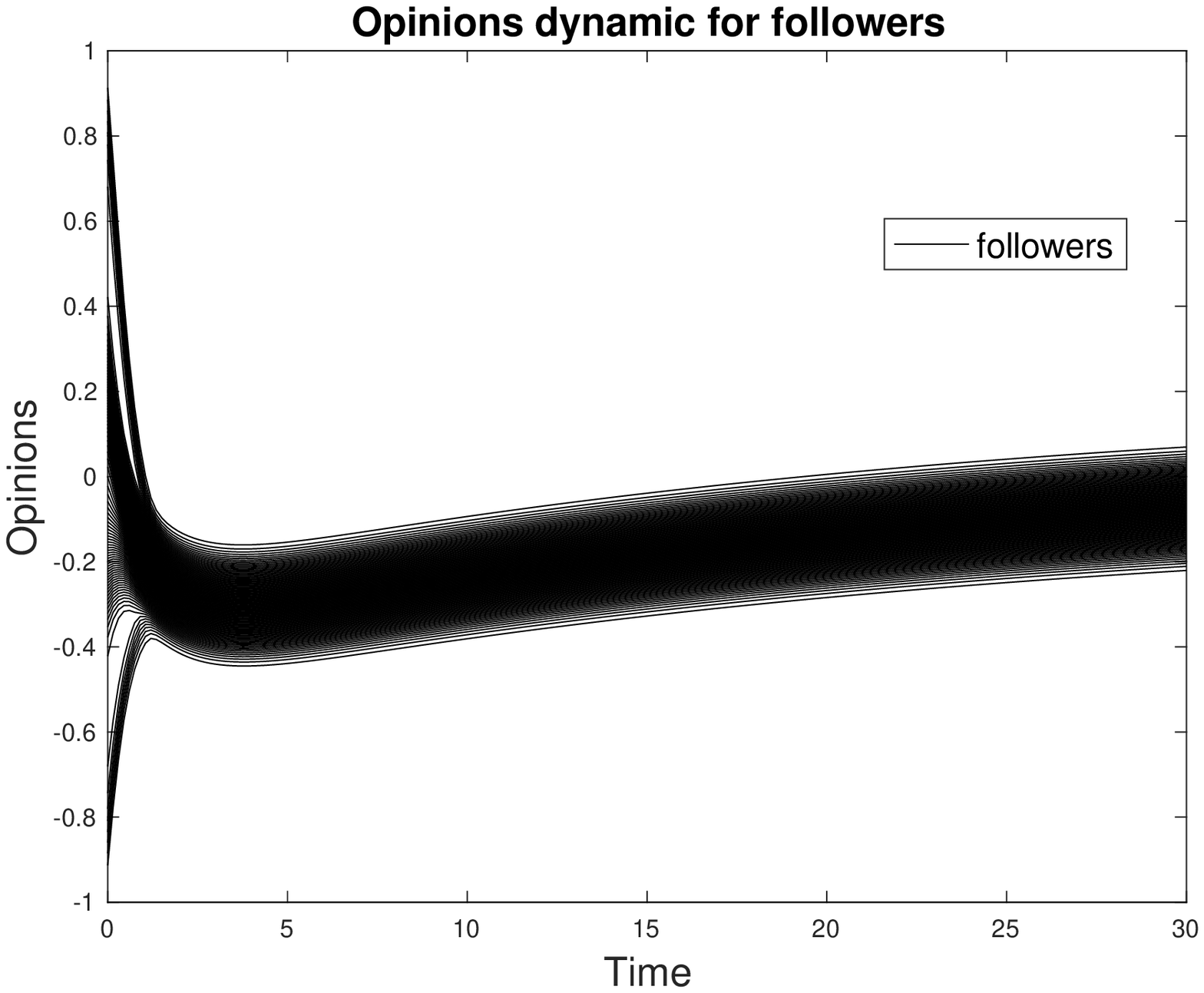}\par
\end{multicols}
\caption{Opinion dynamics in presence of one \emph{strong} group of leaders and one \emph{weak} group of leaders. Top-left initial data for followers(black), left leaders (red) and right leaders (blue). Top-right and bottom-left opinions evolutions for all the groups in short and long term-respectively. Bottom-right opinions evolutions for the followers.}
\label{fig:FLL2}
\end{center}
\end{figure}

We finally consider system \eqref{eq:sys_FLT}, where a small species of \emph{fake} agents is present. The agents of this species are called \emph{trolls}, they are indistinguishable from the followers and they interact with only one group of leaders. In this case, we assume that the fake species has mass $\sigma_q=0.3$ and that it interacts only with leaders sharing the right opinion ($w= +1$). The fact that trolls are not perceived by the followers is modelled by 
\[
P_{ft}(w,v)= P_{ff}(w,v). 
\]
On the other hand, trolls cannot diffuse their opinion, so, according to \eqref{eq:sys_FLT}, the evolution of their opinion is only driven by the compromise part. 
In this example we set
 \[
   P_{ft}(w,v)= 1, \qquad P_{tr}(w,v)=\left(1-\frac{1}{4}(w-v)^2\right).
 \]
 In Figure \ref{fig:FLLT} we show how the presence of trolls affects the behaviour of the population both in the short and the long term.

\begin{figure}[htbp]
\begin{center}
\begin{multicols}{2}
    \includegraphics[width=6cm,height=5cm]{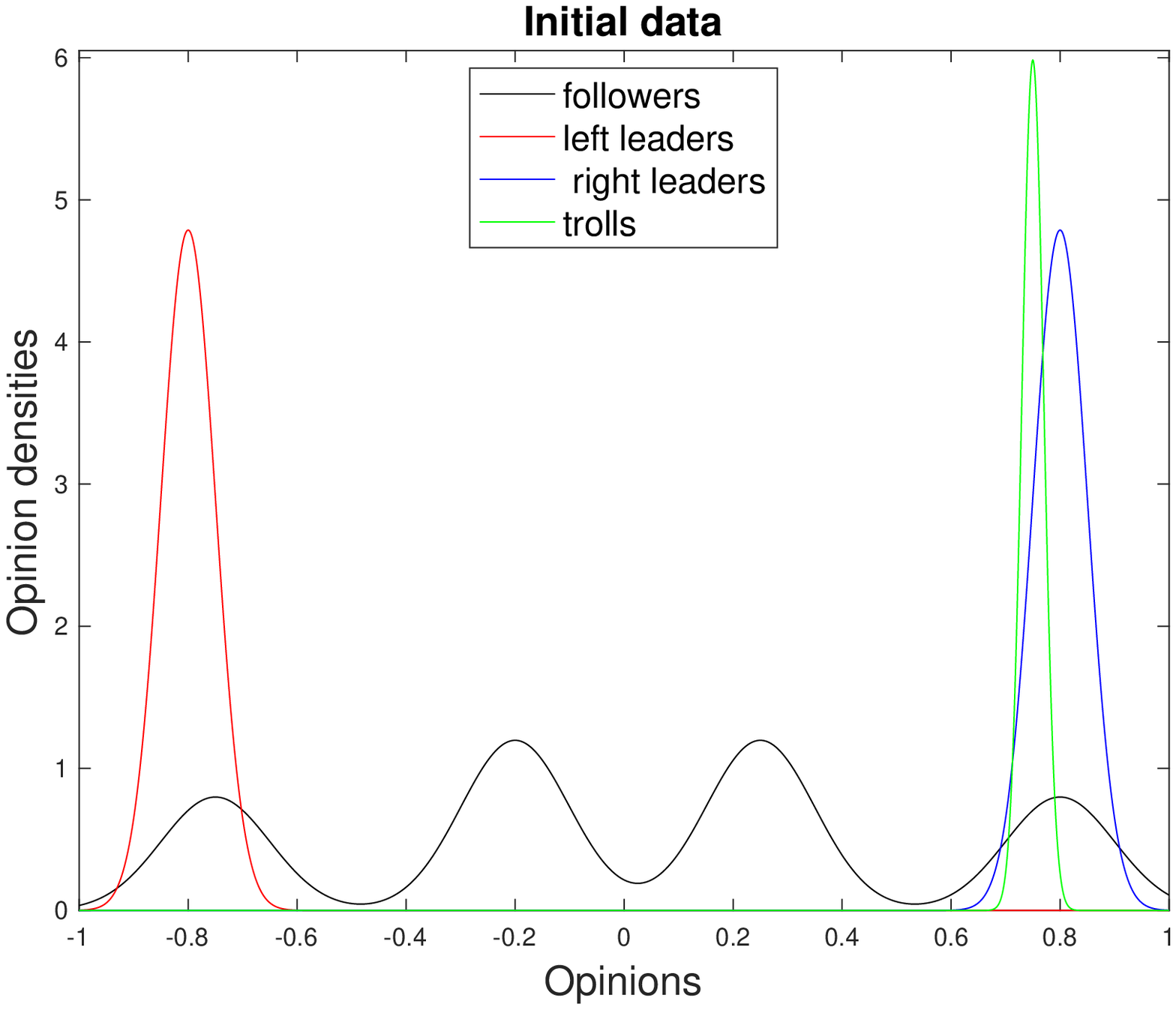}\par 
    \includegraphics[width=6cm,height=5cm]{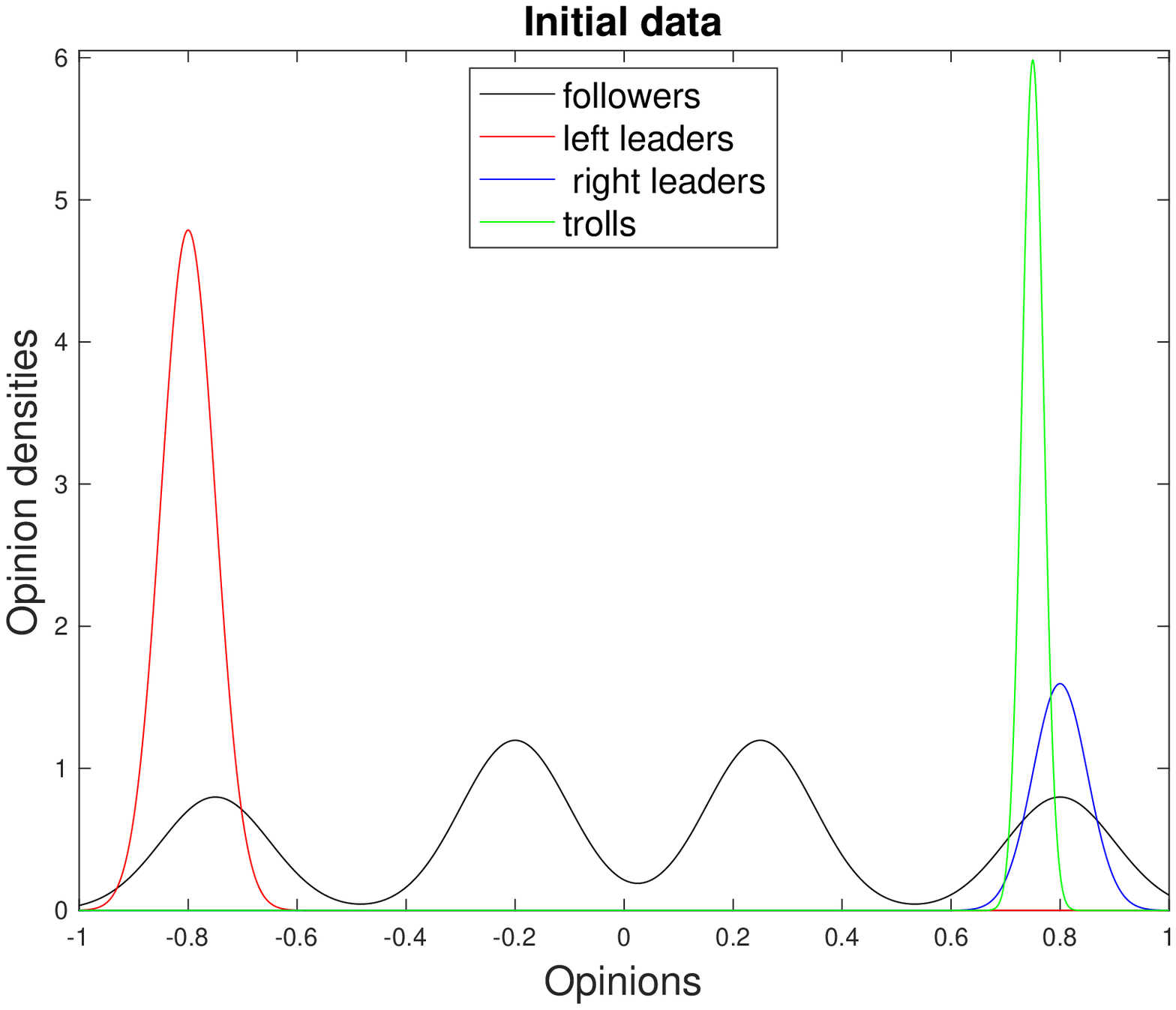}\par 
\end{multicols}
\begin{multicols}{2}
    \includegraphics[width=6cm,height=5cm]{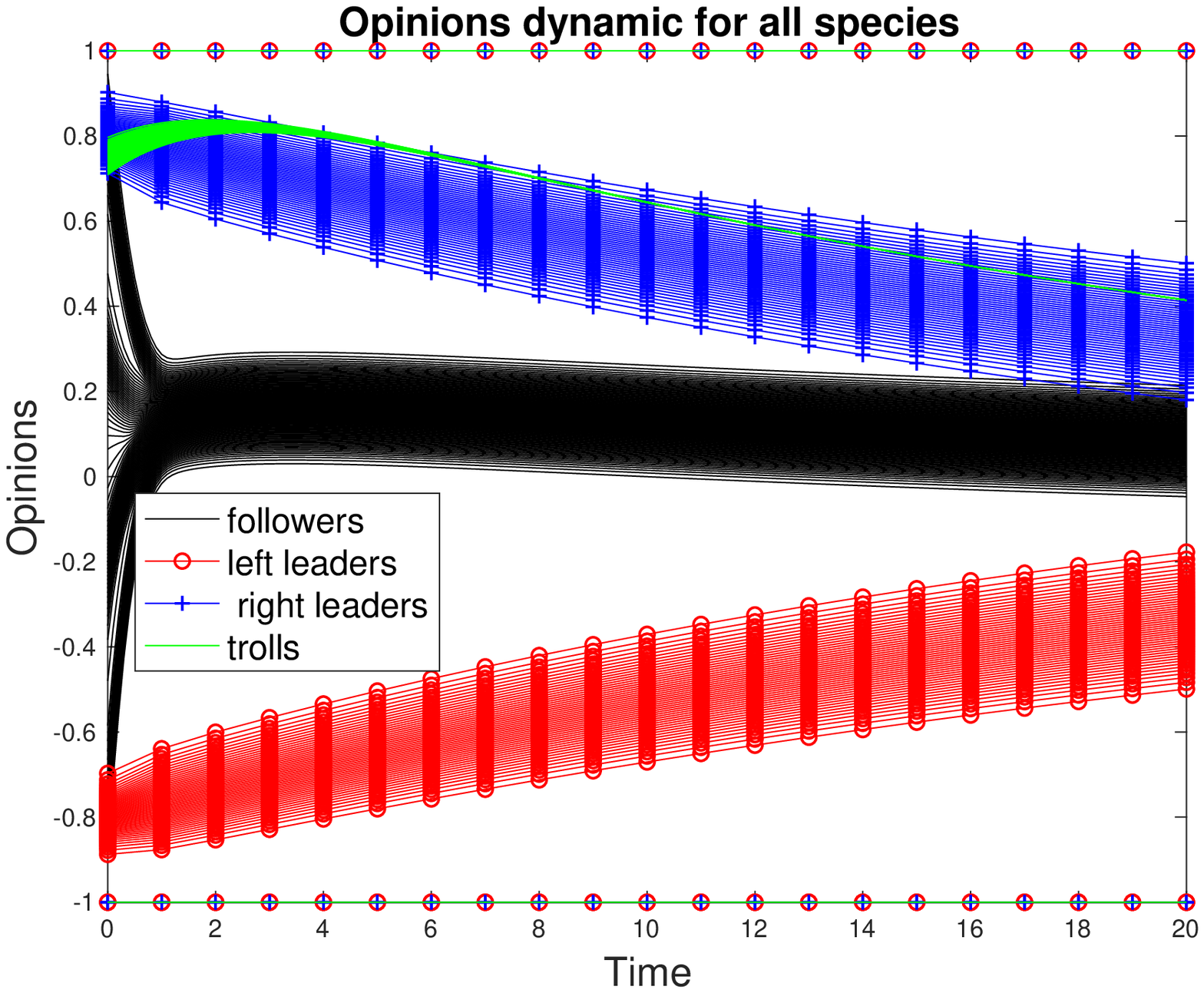}\par
    \includegraphics[width=6cm,height=5cm]{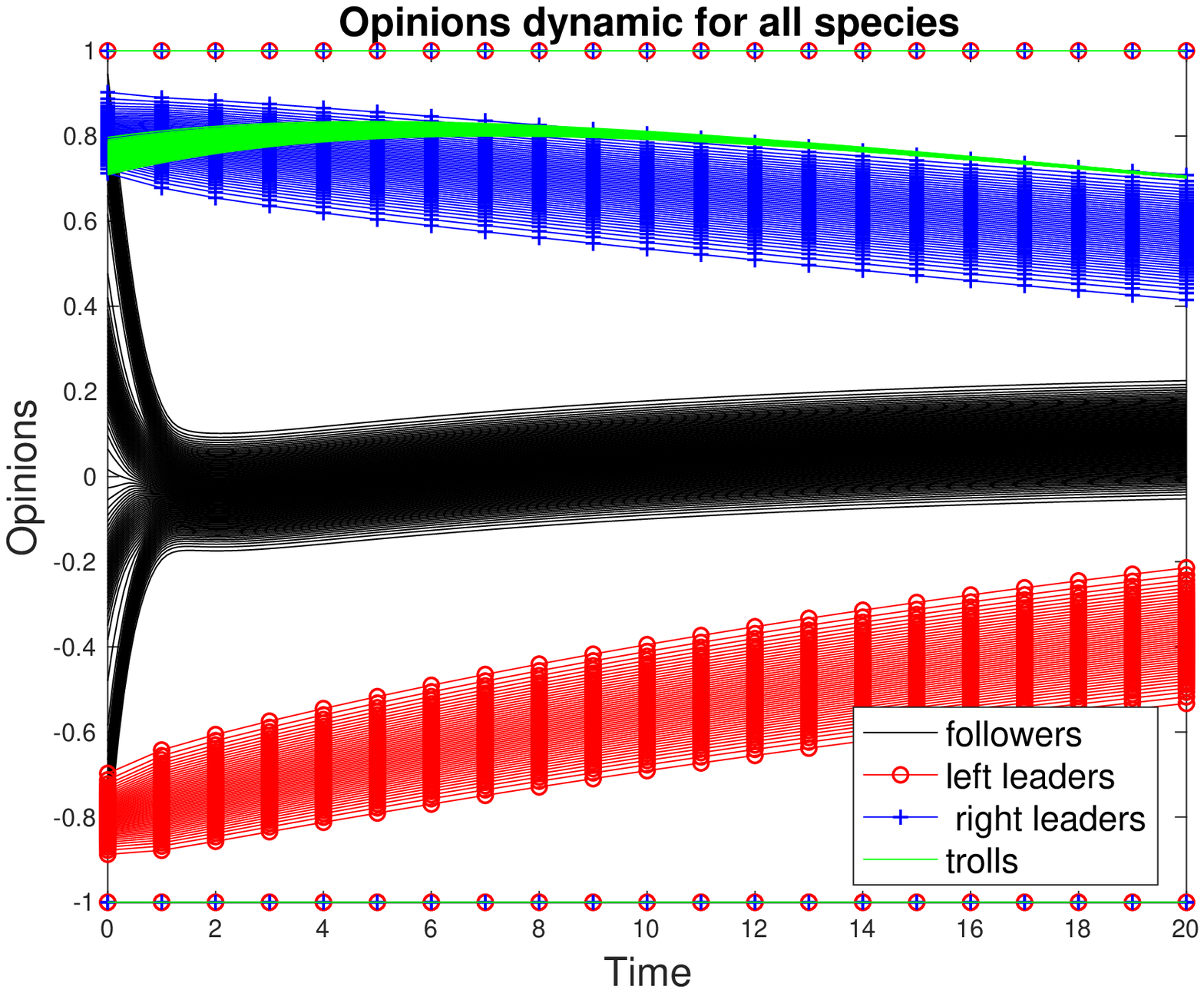}\par
\end{multicols}
\begin{multicols}{2}
    \includegraphics[width=6cm,height=5cm]{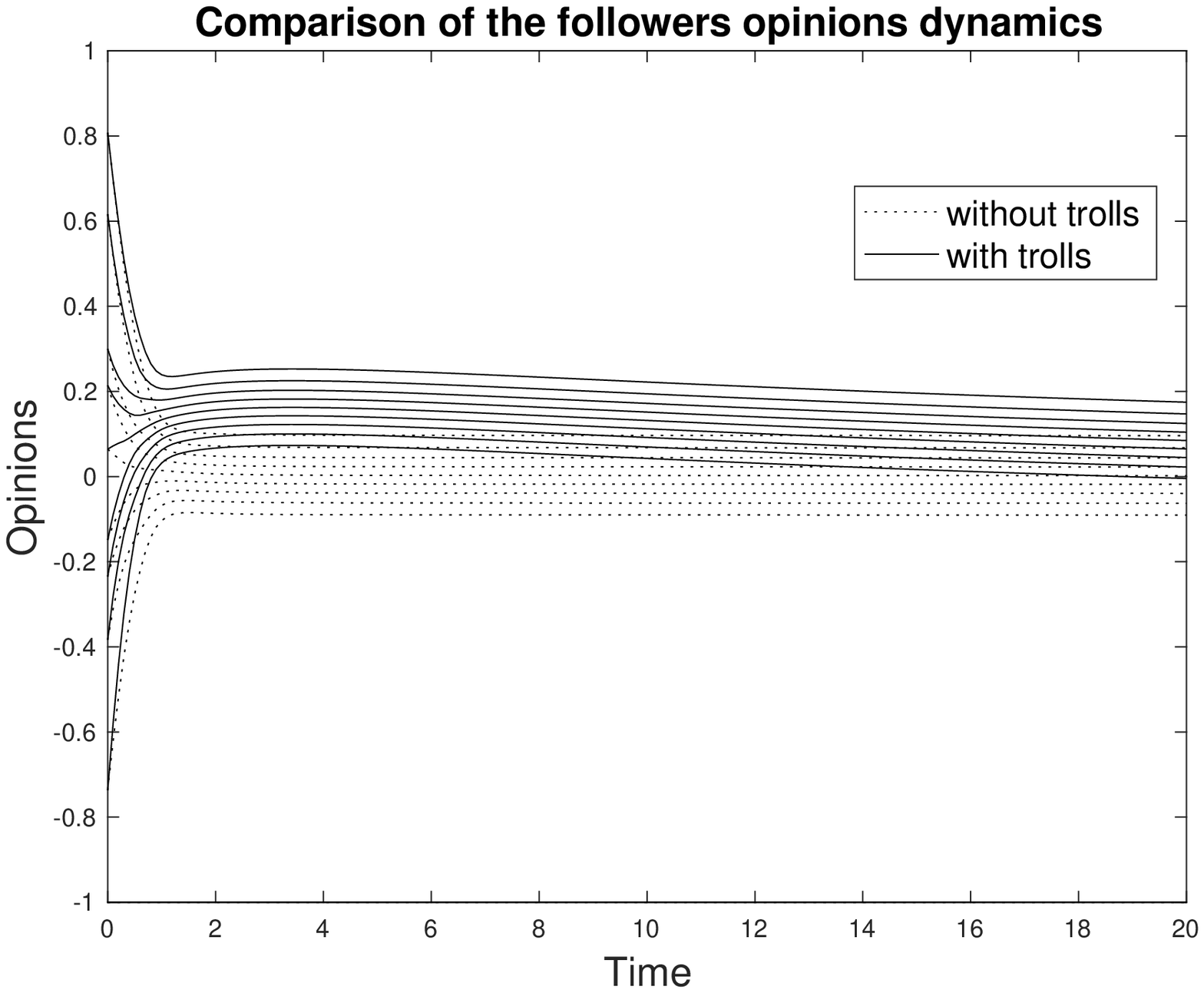}\par
    \includegraphics[width=6cm,height=5cm]{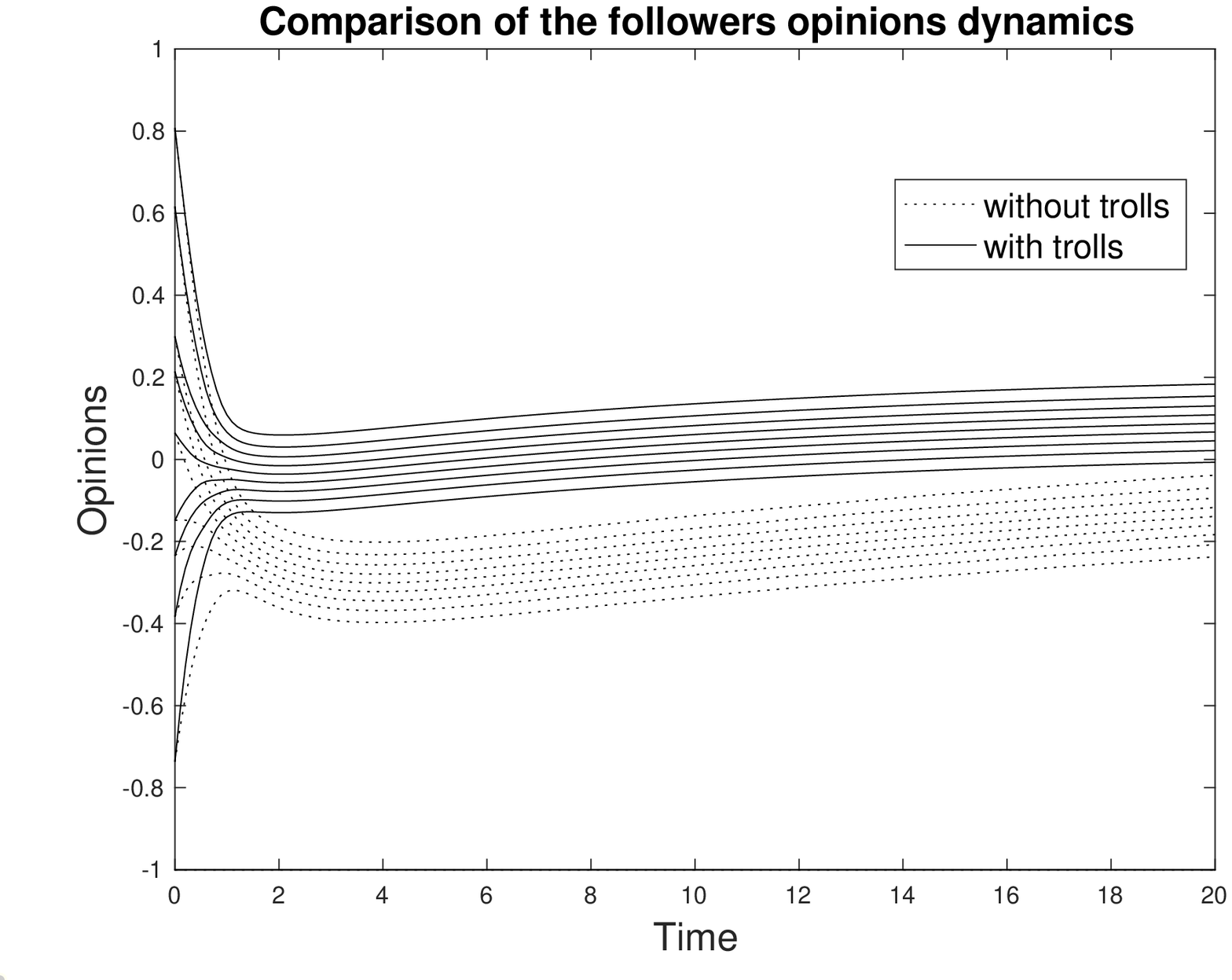}\par
\end{multicols}
\caption{Evolution of system \eqref{eq:sys_FLT}. The left column concerns the case of two equally strong groups of leaders, the right column instead describes the situation where the right leader is \emph{weaker}. Trolls are plotted in green and are associated to the right leaders. Top: initial data. Centre: opinions dynamic for all species. Bottom: comparison between the followers paths with or without trolls.}
\label{fig:FLLT}
\end{center}
\end{figure}


\end{document}